\documentclass[11pt]{article}
\usepackage[margin=1.04in]{geometry}
\usepackage[T1]{fontenc}
\usepackage{lmodern}
\usepackage{microtype,amsmath,amssymb,amsthm,mathtools,bm}
\usepackage{booktabs,array,longtable}
\usepackage[round,authoryear]{natbib}
\usepackage{enumitem}
\usepackage{setspace,needspace,etoolbox}
\BeforeBeginEnvironment{theorem}{\Needspace{7\baselineskip}}
\BeforeBeginEnvironment{proposition}{\Needspace{7\baselineskip}}
\BeforeBeginEnvironment{corollary}{\Needspace{6\baselineskip}}
\BeforeBeginEnvironment{lemma}{\Needspace{6\baselineskip}}
\usepackage[colorlinks=true,linkcolor=black,citecolor=black,urlcolor=black]{hyperref}
\usepackage{bookmark}
\hypersetup{pdftitle={Adaptive Bayesian Estimation of Conditional Densities with Orthogonal Polynomials},pdfauthor={Research manuscript},pdfsubject={Conditional distributions, adaptive estimation, prediction, and polynomial term selection}}
\numberwithin{equation}{section}
\newtheorem{theorem}{Theorem}[section]
\newtheorem{proposition}[theorem]{Proposition}
\newtheorem{lemma}[theorem]{Lemma}
\newtheorem{corollary}[theorem]{Corollary}
\theoremstyle{definition}
\newtheorem{assumption}{Assumption}

\theoremstyle{remark}
\newtheorem{remark}[theorem]{Remark}
\newcommand{\R}{\mathbb R}
\newcommand{\N}{\mathbb N}
\newcommand{\E}{\mathbb E}
\newcommand{\Prob}{\mathbb P}
\newcommand{\ind}{\mathbf 1}

\newcommand{\norm}[1]{\left\lVert #1\right\rVert}

\newcommand{\esssup}{\operatorname*{ess\,sup}}

\newcommand{\TV}{\operatorname{TV}}
\newcommand{\KL}{\operatorname{KL}}

\newcommand{\cF}{\mathcal F}
\newcommand{\cA}{\mathcal A}
\newcommand{\cS}{\mathcal S}
\newcommand{\cD}{\mathcal D}
\newcommand{\eps}{\varepsilon}

\newcommand{\Gclass}{\mathcal G}

\setlist{nosep,leftmargin=1.6em}
\allowdisplaybreaks[2]
\title{\bfseries Adaptive Bayesian Estimation of Conditional Densities\\ Based on Orthogonal Polynomials}
\author{
Andriy Norets\textsuperscript{1},
Marco Stenborg Petterson\textsuperscript{2}
\\[0.7em]
{\small \textsuperscript{1}Brown University}\\
{\small \textsuperscript{2}University of Naples Federico II and CSEF}
}
\date{28 October 2026}
\begin{document}
\maketitle
\vspace{-1.4em}

\begin{abstract}
We study Bayesian estimation of conditional densities represented by normalized squares of orthogonal polynomial series. We specify the conditional density without modeling the marginal distribution of covariates. A prior on smoothness, the number of terms, and the coefficients gives adaptive posterior contraction at the minimax rate without a logarithmic factor when the normalized square-root representation of the true density is bounded away from zero and sufficiently smooth. For one outcome and $d_x$ covariates, the rate over classes with degree smoothness $s$ is $n^{-s/(2s+d_x+1)}$. The posterior predictive density attains the corresponding minimax squared Hellinger risk. For bounded nonnegative representations in Legendre and Chebyshev classes, we establish the minimax rate at every $s>0$. A prior that also permits a small mixture with the reference density adapts over these classes up to a logarithmic factor and retains the exact rate on the positive, sufficiently smooth subclass. Additional results concern identification, conditional distribution functions and quantiles, coefficient recovery and uniform convergence, independent Gaussian coefficient priors, and selection of polynomial terms.
\end{abstract}
\noindent\textit{Keywords:} conditional density estimation; Bayesian nonparametrics; posterior contraction; adaptive estimation; orthogonal polynomials; minimax risk.\\
\textit{JEL classification:} C11, C13, C14.
\clearpage

\section{Introduction}
Conditional distributions describe variation in the location, dispersion, and shape of an outcome distribution across covariate values. A large literature develops nonparametric methods for estimating conditional densities \citep{FanYaoTong1996,HallRacineLi2004,NoretsPati2017}. In this paper we develop an adaptive Bayesian procedure for conditional density estimation using orthogonal polynomial series. Under positivity and smoothness conditions, we establish minimax-optimal posterior contraction rates and show that the posterior predictive density attains the corresponding minimax risk.

Let $Y$ be an outcome and $X$ a vector of covariates. For a polynomial series $h(y,x)$ and an outcome reference probability measure $\mu_y$, consider
\begin{equation}\label{eq:intro-model}
 p_h(y\mid x)=\frac{h(y,x)^2}{\int h(u,x)^2\,\mu_y(du)},
\end{equation}
where squaring makes the density nonnegative. For finite expansions in the tensor bases used below, orthogonality in the outcome argument gives an explicit expression for the denominator. The conditional likelihood can therefore be evaluated without numerical integration over outcomes or estimation of the covariate density.

Multiplying $h$ by a nonzero function of $x$ leaves $p_h$ unchanged. We impose $\int h(y,x)\,\mu_y(dy)=1$ for almost every $x$, which also bounds the denominator below by one. Every conditional density $p$ has a unique nonnegative representation satisfying this normalization, obtained by dividing $\sqrt p$ by its integral with respect to $\mu_y$. Our smoothness assumptions impose decay restrictions on the orthogonal-series coefficients of this normalized square root, with the reference measures and basis held fixed.

The main prior we consider in this paper is hierarchical in a smoothness index $\alpha$, the number of terms, and the coefficients. Conditional on smoothness and dimension, we start with independent centered Gaussian coefficients with variances proportional to $j^{-2\alpha-1}$ and condition their joint distribution on a coefficient ellipsoid. The grid of smoothness indices and its probabilities depend on sample size but not on the observations themselves. If the true representation is bounded away from zero and sufficiently smooth, the posterior contracts at rate $n^{-\alpha/(2\alpha+1)}$ in Hellinger distance averaged over covariates. The bound is uniform over the stated smoothness, truth, and covariate-distribution classes. An exponential bound on the expected posterior probability outside a ball of this radius yields a uniform squared Hellinger risk bound of order $n^{-2\alpha/(2\alpha+1)}$ for the posterior predictive density. A matching minimax lower bound establishes optimality without a logarithmic loss.

For polynomial terms ordered by total degree in $q=d_x+1$ variables, degree smoothness $s$ corresponds to $\alpha=s/q$. The exact adaptive rate is therefore $n^{-s/(2s+q)}$. We can then apply these results to specific families of orthogonal polynomials, e.g. Legendre and Chebyshev polynomials. Specifically, the result applies for $s>q$ with Legendre polynomials and $s>q/2$ with Chebyshev polynomials. These restrictions give uniform convergence and boundedness over the respective coefficient ellipsoids.

Separately, we establish the minimax squared Hellinger rate $n^{-2s/(2s+q)}$ over uniformly bounded nonnegative representations at every $s>0$. The upper bound uses a testing-based estimator that may depend on smoothness. Our adaptive Bayesian extension to this broader class attains the benchmark up to a logarithmic factor.

We also provide additional results concerning conditional functionals, coefficient recovery, and uniform convergence under further assumptions. We also study priors with independent Gaussian coefficients and priors selecting subsets of polynomial terms, obtaining an oracle contraction bound for the latter.

The specification is related to the seminonparametric models of \citet{GT1989}, which combine a reference density with a squared polynomial.

The proofs build on the posterior-convergence and model-averaging arguments of \citet{GGV2000} and \citet{GLV2008}. \citet{Scricciolo2006} obtains exact rates for exponential Fourier models with smoothness-matched Gaussian priors and adaptation over a finite set of smoothness levels. \citet{GaoZhou2016} obtain exact adaptation with modified block priors independent of smoothness and sample size. The adaptive prior studied here depends on sample size. \citet{ShenGhosal2015} develop general results for random-series priors. For conditional densities, \citet{NoretsPati2017} establish adaptive rates up to logarithmic factors for covariate-dependent normal mixtures. \citet{LuoLuo2026} study univariate weighted orthogonal-polynomial densities with smoothness-dependent priors and a modification permitting densities to approach zero. Our results concern unknown smoothness in a conditional model and include matching conditional minimax bounds.

Sections~\ref{sec:model} and \ref{sec:smoothness} introduce the model and smoothness conditions. Section~\ref{sec:adaptive} gives the main adaptive and minimax results. Sections~\ref{sec:functionals} and \ref{sec:polynomials} consider functionals, uniform convergence, and polynomial degree. Section~\ref{sec:floor} treats densities that may vanish. Sections~\ref{sec:ordinary} and \ref{sec:sparse} study independent Gaussian priors and selection of terms. The appendices contain the proofs.

\section{Model and notation}\label{sec:model}
\subsection{Data generating process}

We observe independent and identically distributed data $\cD_n=(Z_1,\ldots,Z_n)$, where $Z_i=(Y_i,X_i)$ consists of an outcome and covariates with joint distribution
\[
 P_0(dy,dx)=p_0(y\mid x)\,\mu_y(dy)G_0(dx).
\]
Here $\mu_y$ and $\mu_x$ are fixed probability reference measures on standard Borel spaces $\mathcal Y$ and $\mathcal X$, and $G_0\ll\mu_x$. Write $\mu=\mu_y\otimes\mu_x$ and $z=(y,x)$. Throughout, conditional densities are measurable, nonnegative, and integrate to one with respect to $\mu_y$ for $\mu_x$-almost every $x$. Let $P_{p,G}$ denote the joint distribution corresponding to a conditional density $p$ and a covariate distribution $G\ll\mu_x$. We write $P_0^n=P_0^{\otimes n}$ for the distribution of the sample and $\E_0$ for expectation under the true distribution.

For conditional densities $p,q$ and $G\ll\mu_x$, define
\begin{align}
 d_G^2(p,q)
 &=\int(\sqrt p-\sqrt q)^2\,d\mu_y\,dG,
 \label{eq:hell}\\
 K_G(p,q)
 &=\int p\log(p/q)\,d\mu_y\,dG,
 \label{eq:kl}\\
 V_G(p,q)
 &=\int p\{\log(p/q)\}^2\,d\mu_y\,dG.
 \label{eq:var}
\end{align}
We measure estimation error by the integrated squared Hellinger distance $d_{G_0}^2$, which takes values in $[0,2]$. The quantities $K_G$ and $V_G$ are the Kullback--Leibler divergence and the uncentered second moment of the log likelihood ratio.

\begin{assumption}[Covariate distribution]\label{ass:design}
The density $g_0=dG_0/d\mu_x$ satisfies
\[
 0<\underline g\le g_0(x)\le\overline g<\infty
 \qquad\text{for }\mu_x\text{-almost every }x.
\]
\end{assumption}

Let $\Gclass(\underline g,\overline g)$ denote the class of covariate distributions satisfying Assumption~\ref{ass:design}. We do not impose any smoothness requirement on $g_0$.

We use $\|\cdot\|_2$ for the $L_2(\mu)$ norm of functions and the Euclidean or $\ell_2$ norm of coefficients, and $\|\cdot\|_\infty$ for the supremum norm of coefficients and the essential supremum norm of functions. Let $\N_0=\{0,1,\ldots\}$ and let $N(\eps,A,d)$ denote the covering number of $A$ by $d$-balls of radius $\eps$. Positive finite constants $c,C$ may change from line to line. For nonnegative sequences, $a_n\lesssim b_n$ means $a_n\le Cb_n$ for all sufficiently large $n$, with $C$ independent of $n$; $a_n\asymp b_n$ means $a_n\lesssim b_n\lesssim a_n$. Unless stated otherwise, equalities and inequalities between functions hold $\mu$-almost everywhere, or $\mu_x$-almost everywhere for functions of $x$. If $\mu_y(dy)=w_y(y)\,dy$, the corresponding Lebesgue density is $f(y\mid x)=w_y(y)p(y\mid x)$. Expressing a fixed conditional distribution relative to Lebesgue measure rather than $\mu_y$ leaves likelihood ratios and Hellinger loss unchanged. The smoothness classes, however, are defined through the coefficients of the normalized square-root representation relative to the chosen reference measures and ordered basis.

\subsection{Series model}

Fix an ordered real orthonormal basis $\{\phi_j:j\ge1\}$  of
\[
 \mathcal H_0
 =\left\{
 v\in L_2(\mu):
 \int v(y,x)\,\mu_y(dy)=0
 \text{ for }\mu_x\text{-almost every }x
 \right\},
\]

For $\theta\in\ell_2$, define
\begin{equation}\label{eq:series-model}
 h_\theta=1+\sum_{j\ge1}\theta_j\phi_j,\qquad
 Q_\theta(x)=\int h_\theta(y,x)^2\,\mu_y(dy),\qquad
 p_\theta(y\mid x)=\frac{h_\theta(y,x)^2}{Q_\theta(x)}.
\end{equation}
By orthonormality, the series converges in $L_2(\mu)$, and $\|h_\theta-h_\eta\|_2=\|\theta-\eta\|_2$. Moreover,
\[
 \int h_\theta(y,x)\,\mu_y(dy)=1,\qquad
 1\le Q_\theta(x)<\infty
 \quad\text{for }\mu_x\text{-almost every }x.
\]

Setting $h_\theta(\cdot,x)=1$ on the exceptional set makes $p_\theta(\cdot\mid x)$ a density for every $x$. All coefficient priors considered below assign probability one to finitely supported sequences. The true representation, however, may have infinitely many nonzero coefficients.

For a measurable function $h$, denote $Q_h(x)=\int h(y,x)^2\,\mu_y(dy)$ and $p_h(y\mid x)=h(y,x)^2/Q_h(x)$ whenever $0<Q_h(x)<\infty$ almost everywhere.

\begin{proposition}[Normalization and Hellinger distance]
\label{prop:geometry} Let $h,k\in L_2(\mu)$ be real functions satisfying
\[
 \int h(y,x)\,\mu_y(dy)
 =\int k(y,x)\,\mu_y(dy)=1
 \quad\text{for }\mu_x\text{-almost every }x.
\] 
Then $1\le Q_h,Q_k<\infty$ almost everywhere, and $p_h,p_k$ are conditional densities. If $g_0\le\overline g$, then
\begin{equation}\label{eq:global-H}
 d_{G_0}(p_h,p_k)
 \le\sqrt{\overline g}\,\|h-k\|_2.
\end{equation}
If also $0\le h,k\le B$ for some $B\ge1$ and $g_0\ge\underline g>0$, then
\begin{equation}\label{eq:inverse-H}
 \|h-k\|_2
 \le\sqrt{(B+B^2)/\underline g}\,d_{G_0}(p_h,p_k).
\end{equation}

Every conditional density $p$ has a unique nonnegative measurable representation satisfying $\int h_p(y,x)\,\mu_y(dy)=1$ for $\mu_x$-almost every $x$, up to $\mu$-null sets:
\begin{equation}\label{eq:canonical}
 c_p(x)=\int\sqrt{p(y\mid x)}\,\mu_y(dy),\qquad
 h_p(y,x)=\frac{\sqrt{p(y\mid x)}}{c_p(x)},\qquad
 p_{h_p}=p.
\end{equation}
Here $0<c_p\le1$ almost everywhere, and $0\le m\le p\le M<\infty$ implies $\sqrt m\le h_p\le M$.
\end{proposition}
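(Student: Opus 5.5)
The plan is to work fiberwise: fix $x$ outside a $\mu_x$-null set, treat $h(\cdot,x),k(\cdot,x)$ as elements of $L_2(\mu_y)$, prove pointwise-in-$x$ inequalities, and then integrate against $G_0$, using $\underline g\le g_0\le\overline g$. Write $\|\cdot\|_x$ for the $L_2(\mu_y)$ norm at fixed $x$ and $d_x^2=\int(\sqrt{p_h}-\sqrt{p_k})^2\,d\mu_y$.

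\emph{Normalization.} Since $\mu_y$ is a probability measure, Cauchy--Schwarz gives $1=(\int h\,d\mu_y)^2\le Q_h$. Also $\int Q_h\,d\mu_x=\|h\|_2^2<\infty$, so $Q_h<\infty$ a.e. Hence $p_h=h^2/Q_h$ is a conditional density.

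\emph{Bound \eqref{eq:global-H}.} Note $\sqrt{p_h}=|h|/\|h\|_x$ and $\| |h| \|_x=\|h\|_x\ge1$. The map $u\mapsto u/\max(1,\|u\|)$ is the metric projection onto the closed unit ball of $L_2(\mu_y)$, hence $1$-Lipschitz. For $\|u\|\ge1$ it equals $u/\|u\|$, so
\[
d_x\le\bigl\| |h|-|k| \bigr\|_x\le\|h-k\|_x .
\]
Squaring, integrating against $g_0\,d\mu_x$ and using $g_0\le\overline g$ gives \eqref{eq:global-H}.

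\emph{Bound \eqref{eq:inverse-H}.} This is the step needing care, because a naive triangle-inequality argument gives the constant $(B+B^2)^2$ rather than $B+B^2$. Instead I will use an exact identity. For $h,k\ge0$ we have $h=\sqrt{Q_h}\sqrt{p_h}$, so $\int hk\,d\mu_y=\sqrt{Q_hQ_k}\,(1-d_x^2/2)$. Therefore
\[
\|h-k\|_x^2=Q_h+Q_k-2\int hk\,d\mu_y=\bigl(\sqrt{Q_h}-\sqrt{Q_k}\bigr)^2+\sqrt{Q_hQ_k}\,d_x^2 .
\]
Since $0\le h\le B$, we get $Q_h\le B\int h\,d\mu_y=B$, so the second term is at most $B\,d_x^2$.

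For the first term, normalization gives $1=\int h\,d\mu_y=\sqrt{Q_h}\,a$ with $a=\int\sqrt{p_h}\,d\mu_y$. Similarly $\sqrt{Q_k}=1/b$ with $b=\int\sqrt{p_k}\,d\mu_y$, and $a,b\ge B^{-1/2}$. Cauchy--Schwarz gives $|a-b|\le d_x$, so
\[
\bigl|\sqrt{Q_h}-\sqrt{Q_k}\bigr|=\frac{|a-b|}{ab}\le B\,d_x .
\]
Combining the two terms, $\|h-k\|_x^2\le(B+B^2)d_x^2$. Integrating over $\mu_x$ and using $1\le g_0/\underline g$ gives \eqref{eq:inverse-H}.

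\emph{Canonical representation.} By Cauchy--Schwarz, $c_p\le(\int p\,d\mu_y)^{1/2}=1$. Also $c_p>0$ a.e., since $\int p\,d\mu_y=1$ forces $\sqrt p$ to be nonzero on a set of positive measure.

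Existence: $h_p=\sqrt p/c_p$ satisfies $\int h_p\,d\mu_y=1$ and $Q_{h_p}=c_p^{-2}$, so $p_{h_p}=p$.

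Uniqueness: if $h\ge0$ satisfies $\int h\,d\mu_y=1$ and $p_h=p$, then $h=\sqrt{Q_h}\sqrt p$. Integrating gives $\sqrt{Q_h}\,c_p=1$, hence $h=h_p$ a.e.

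Bounds: if $p\le M$, then $c_p\ge\int p/\sqrt M\,d\mu_y=M^{-1/2}$, so $h_p\le\sqrt M\cdot\sqrt M=M$. If $p\ge m$, then $h_p\ge\sqrt m/c_p\ge\sqrt m$, using $c_p\le1$.
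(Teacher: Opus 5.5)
Your proof is correct. For the reverse inequality \eqref{eq:inverse-H} and for the canonical representation, you follow the paper's argument essentially step for step. Your decomposition $\|h-k\|_x^2=(\sqrt{Q_h}-\sqrt{Q_k})^2+\sqrt{Q_hQ_k}\,d_x^2$ is the paper's identity $\|h-k\|_2^2=(a-b)^2+ab\|r-s\|_2^2$ with $a=\sqrt{Q_h}$. Your bound $|\sqrt{Q_h}-\sqrt{Q_k}|\le B\,d_x$, obtained from $\int\sqrt{p_h}\,d\mu_y=Q_h^{-1/2}$ and Cauchy--Schwarz, is the paper's step $(a-b)^2=a^2b^2\{\int(r-s)\,d\mu_y\}^2\le a^2b^2\|r-s\|_2^2$ together with $a^2,b^2\le B$.

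The genuine difference is in \eqref{eq:global-H}. The paper applies the same identity to $|h|$ and $|k|$, drops the term $-(a-b)^2$, and uses $ab\ge1$. You instead use the fact that the metric projection onto the closed unit ball of $L_2(\mu_y)$ is $1$-Lipschitz, and that on $\{\|u\|\ge1\}$ it is just radial normalization $u\mapsto u/\|u\|$. Your route is shorter and shows exactly where $Q_h\ge1$ is used: both $|h|$ and $|k|$ lie outside the unit ball. The paper's route gets the forward and reverse inequalities from one identity read in two directions, and it avoids any appeal to projection theory.
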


By orthonormality, \eqref{eq:global-H} gives
\[
 d_{G_0}(p_\theta,p_\eta)
 \le\sqrt{\overline g}\,\|\theta-\eta\|_2,
\]
without requiring nonnegative $h_\theta,h_\eta$. For bounded nonnegative representations, \eqref{eq:inverse-H} gives the reverse bound. We call $h_p$ the normalized square-root representation of $p$. Although $Q_{h_p}(x)=c_p(x)^{-2}<\infty$ almost everywhere, $h_p$ need not belong to $L_2(\mu)$. The smoothness conditions below impose this additional requirement.

For $\mathcal Y\subseteq\R$ and $\mu_y$ with infinite support, let $\psi_0=1,\psi_1,\ldots$ be orthonormal polynomials in $L_2(\mu_y)$ and let $\{\chi_\ell\}$ be an orthonormal basis of $L_2(\mu_x)$. Set $\phi_j(y,x)=\psi_{k_j}(y)\chi_{\ell_j}(x)$, $k_j\ge1$. For a finite expansion, $a_k(x)=\sum_{j:k_j=k}\theta_j\chi_{\ell_j}(x)$ gives
\begin{equation}\label{eq:normalizer}
 h_\theta(y,x)=1+\sum_{k\ge1}a_k(x)\psi_k(y),\qquad
 Q_\theta(x)=1+\sum_{k\ge1}a_k(x)^2.
\end{equation}

\begin{proposition}[Identification]\label{prop:identification}
Suppose $\mathcal Y\subseteq\R$ and $\mu_y$ has infinite support. Let $h,k$ be measurable functions whose sections $h(\cdot,x)$ and $k(\cdot,x)$ are real polynomials in $L_2(\mu_y)$ with integral one for $\mu_x$-almost every $x$. Then $p_h=p_k$ $\mu$-almost everywhere implies $h=k$ $\mu$-almost everywhere. Consequently, two finite expansions in the same orthonormal tensor basis with polynomial outcome functions generate the same conditional density only if their coefficient vectors agree after padding with zeros.
\end{proposition}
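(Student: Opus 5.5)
The plan is to reduce the claim to a pointwise statement about polynomials in $y$ for a fixed covariate value, and then lift it back with a measurability and null-set argument.

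\textbf{Step 1 (reduction to fixed $x$).} Since $p_h=p_k$ $\mu$-almost everywhere and $\mu=\mu_y\otimes\mu_x$, Fubini gives a $\mu_x$-full set $A$ with three properties for every $x\in A$:
\begin{itemize}
\item $p_h(\cdot\mid x)=p_k(\cdot\mid x)$ $\mu_y$-almost everywhere;
\item $h(\cdot,x)$ and $k(\cdot,x)$ are polynomials in $L_2(\mu_y)$;
\item $\int h(y,x)\,\mu_y(dy)=\int k(y,x)\,\mu_y(dy)=1$.
\end{itemize}
By Proposition~\ref{prop:geometry}, $1\le Q_h(x),Q_k(x)<\infty$ on $A$, so the densities are well defined there.

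\textbf{Step 2 (polynomial identity).} Fix $x\in A$. Write $P=h(\cdot,x)$, $R=k(\cdot,x)$, $a=Q_h(x)$ and $b=Q_k(x)$. Then
\[
bP(y)^2-aR(y)^2=0\quad\text{for }\mu_y\text{-almost every }y.
\]
The left side is a polynomial. The set where it vanishes has full $\mu_y$-measure. Because $\mu_y$ has infinite support, every set of full measure is infinite: a finite set of full measure would force the support to lie inside it. A polynomial with infinitely many zeros is identically zero, so $bP^2=aR^2$ as polynomials.

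\textbf{Step 3 (factoring).} Factor the identity as
\[
(\sqrt b\,P-\sqrt a\,R)(\sqrt b\,P+\sqrt a\,R)=0 .
\]
The ring $\R[y]$ has no zero divisors, so one factor vanishes. That gives $P=cR$ with $c=\pm\sqrt{a/b}$.

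\textbf{Step 4 (fixing the constant).} Integrate against $\mu_y$; this is legitimate because polynomials in $L_2(\mu_y)$ lie in $L_1(\mu_y)$ for a probability measure. We get $1=c\cdot1$, so $c=1$ and $P=R$ as polynomials. The sign ambiguity is removed precisely by the normalization, and this is the only delicate point: without it $-R$ would give the same density. Hence $h(\cdot,x)=k(\cdot,x)$ everywhere for $x\in A$. Since $h$ and $k$ are jointly measurable, Fubini then gives $h=k$ $\mu$-almost everywhere.

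\textbf{Step 5 (the consequence).} Take two finite expansions $h_\theta,h_\eta$ in the tensor basis. By \eqref{eq:normalizer}, their sections in $y$ are polynomials with integral one, since $\psi_0=1$ and $\psi_k\perp1$ for $k\ge1$. They also lie in $L_2(\mu_y)$ for almost every $x$, because $h_\theta\in L_2(\mu)$. The first part therefore gives $h_\theta=h_\eta$ in $L_2(\mu)$. By orthonormality, $\|\theta-\eta\|_2=\|h_\theta-h_\eta\|_2=0$ after padding with zeros.

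The only step needing care is Step 2, the passage from $\mu_y$-almost-everywhere equality to polynomial identity. Infinite support is exactly what makes every full-measure set infinite, and that is all the argument requires.
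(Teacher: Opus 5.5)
Your proof is correct and follows the paper's argument essentially step for step. In both, $\mu_y$-a.e.\ vanishing of the polynomial $Q_k(x)h(\cdot,x)^2-Q_h(x)k(\cdot,x)^2$ forces it to vanish identically by infinite support, factoring in $\R[y]$ gives $h(\cdot,x)=\pm\sqrt{Q_h(x)/Q_k(x)}\,k(\cdot,x)$, and the outcome normalization fixes both the sign and the constant (your single step $1=c$ does this neatly). One small point: Proposition~\ref{prop:geometry} assumes $h,k\in L_2(\mu)$, which is not given here, but $1\le Q_h(x),Q_k(x)<\infty$ follows directly from Cauchy--Schwarz applied to each section, so nothing is lost.
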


Within this finite polynomial model, the coefficient sequence is identified; a truncation index allowing trailing zeros need not be. Under Assumption~\ref{ass:design}, equality of the observed joint distributions implies equality of conditional densities $\mu$-almost everywhere. Proofs of both propositions are given in Appendix~\ref{app:model-details}.

The normalization $\int h_\theta(y,x)\,\mu_y(dy)=1$ excludes terms depending only on $x$. Fixing only the overall constant coefficient does not identify the representation. For example, under uniform reference measures on $[-1,1]^2$, $(1+ax)(1+by)$ and $1+by$ have constant coefficient one and generate the same conditional density when $0<|a|<1$, but only the second satisfies the normalization. This restriction also changes the finite polynomial model: dividing an unrestricted polynomial by its nonzero integral with respect to $y$ generally produces a rational function of $x$, which need not have a finite polynomial expansion.

\section{Smoothness and approximation}\label{sec:smoothness}

For $\alpha>0$ and $R\ge0$, define
\begin{equation}\label{eq:ellipsoid}
 \|\theta\|_{2,\alpha}^2
 =\sum_{j\ge1}j^{2\alpha}\theta_j^2,
 \qquad
 \Theta_\alpha(R)
 =\{\theta\in\ell_2:\|\theta\|_{2,\alpha}\le R\}.
\end{equation}
The radius $R$ bounds the weighted coefficient norm, while a larger $\alpha$ imposes a stronger restriction on the higher-index coefficients. This is a smoothness condition on $h_\theta$, relative to the chosen basis and ordering. Section~\ref{sec:polynomials} relates this condition to polynomial degree and differentiability.

Let $\theta^{(J)}=(\theta_1,\ldots,\theta_J,0,\ldots)$ denote the sequence obtained by retaining the first $J$ coefficients. Define the corresponding finite-dimensional ellipsoid by
\[
 \Theta_{\alpha,J}(R)
 =
 \left\{
 \vartheta\in\R^J:
 \sum_{j=1}^J j^{2\alpha}\vartheta_j^2\le R^2
 \right\}.
\]
We regard finite vectors as infinite sequences by appending zeros.

\begin{assumption}[Basis growth]\label{ass:kernel}
For some $C_K<\infty$ and $\rho\ge1$,
\begin{equation}\label{eq:kernel}
 \esssup_z\sum_{j=1}^m\phi_j(z)^2
 \le C_Km^\rho,
 \qquad m\ge1.
\end{equation}
\end{assumption}

By Cauchy--Schwarz, \eqref{eq:kernel} implies
\[
 \left\|\sum_{j\le m}\theta_j\phi_j\right\|_\infty
 \le\sqrt{C_K}\,m^{\rho/2}\|\theta^{(m)}\|_2.
\]
Thus $\rho$ controls how the supremum-norm bound grows with the number of terms, for a given coefficient norm. Orthonormality gives $\int\sum_{j=1}^m\phi_j(z)^2\,\mu(dz)=m$, so \eqref{eq:kernel} requires $\rho\ge1$.

For the main adaptive result, fix
\begin{equation}\label{eq:range}
 \rho/2<\alpha_-<\alpha_+<\infty,\qquad
 0<R_0<R_*<\infty,\qquad
 0<b_0<1.
\end{equation}
The interval $[\alpha_-,\alpha_+]$ is the range of smoothness indices over which adaptation is sought. For each $\alpha$ in this interval, define
\begin{equation}\label{eq:trueclass}
 \cF_\alpha(R_0,b_0)
 =
 \{p_\theta:
   \theta\in\Theta_\alpha(R_0),\
   h_\theta\ge b_0\ \mu\text{-almost everywhere}\}.
\end{equation}
The true density is assumed to belong to $\cF_{\alpha_0}(R_0,b_0)$ for some unknown $\alpha_0\in[\alpha_-,\alpha_+]$. The constant $R_*$ is the ellipsoid radius used in the prior below. The inequality $R_0<R_*$ leaves a positive margin in the weighted coefficient norm between truncated true coefficients and the boundary of the prior ellipsoid.

On these classes, nonnegativity and normalization imply $h_\theta=h_{p_\theta}$. Assumption~\ref{ass:kernel} and $\alpha_->\rho/2$ also give $\|h_\theta\|_\infty\le1+CR_0$, with a constant common to all $\alpha\in[\alpha_-,\alpha_+]$ and $\theta\in\Theta_\alpha(R_0)$.

For $\theta\in\Theta_\alpha(R_0)$, truncation gives
\[
 \|\theta-\theta^{(J)}\|_2
 \le R_0(J+1)^{-\alpha}.
\]
If $g_0\le\overline g$, Proposition~\ref{prop:geometry} therefore implies
\[
 d_{G_0}(p_\theta,p_{\theta^{(J)}})
 \le\sqrt{\overline g}\,R_0(J+1)^{-\alpha}.
\]
This Hellinger approximation bound holds for every $\alpha>0$, without a positivity restriction on $h_\theta$ or a basis-growth assumption.

Under Assumption~\ref{ass:kernel}, the stronger restriction $\alpha\ge\alpha_->\rho/2$ also gives
\[
 \|h_\theta-h_{\theta^{(J)}}\|_\infty
 \le CR_0J^{\rho/2-\alpha}.
\]
Consequently, if $h_\theta\ge b_0$, then $h_{\theta^{(J)}}\ge b_0/2$ for all sufficiently large $J$, uniformly over the stated smoothness classes. The uniform bounds follow from Lemma~\ref{lem:interpolation}.

\section{Adaptive estimation}\label{sec:adaptive}
\subsection{Prior}

We specify a prior on a smoothness index $\alpha$, a truncation level $J$, and the series coefficients $\theta$. The prior depends on sample size, but not on the realized observations, the covariate distribution $G_0$, or the unknown smoothness index $\alpha_0$.

For $n\ge3$ and $\alpha\in[\alpha_-,\alpha_+]$, define
\begin{equation}\label{eq:rates}
 t_n(\alpha)=n^{1/(2\alpha+1)},\qquad
 \eps_n(\alpha)=n^{-\alpha/(2\alpha+1)},\qquad
 J_n(\alpha)=\lceil t_n(\alpha)\rceil.
\end{equation}
Here $t_n(\alpha)$ is the benchmark dimension at which the approximation order $J^{-\alpha}$ equals $\sqrt{J/n}$, $\eps_n(\alpha)$ is the corresponding target Hellinger contraction rate, and $J_n(\alpha)$ is the integer version of the benchmark dimension. These quantities satisfy $n\eps_n(\alpha)^2=t_n(\alpha)$. The truncation level $J$ is random under the prior; it is not fixed at $J_n(\alpha)$.

Fix $c_\Delta,A,\lambda,\tau>0$. Define the smoothness grid by
\begin{equation}\label{eq:grid}
 \cA_n
 =
 \left\{
 \alpha_-+\frac{kc_\Delta}{\log n}:
 k=0,\ldots,
 \left\lfloor
 \frac{(\alpha_+-\alpha_-)\log n}{c_\Delta}
 \right\rfloor
 \right\},
 \qquad
 L_n=|\cA_n|=O(\log n).
\end{equation}
The spacing $c_\Delta/\log n$ ensures that replacing any $\alpha_0\in[\alpha_-,\alpha_+]$ by the largest grid point not exceeding it changes the benchmark dimension and target rate by at most constant factors.

Draw $\alpha\in\cA_n$ and $J\in\N_0$ independently with probabilities
\begin{align}
 w_n(\alpha)
 &=
 \frac{e^{-At_n(\alpha)}}
 {\sum_{\beta\in\cA_n}e^{-At_n(\beta)}},
 &&\alpha\in\cA_n,
 \label{eq:weights}\\
 \pi(J)
 &=(1-e^{-\lambda})e^{-\lambda J},
 &&J=0,1,\ldots.
 \label{eq:Jprior}
\end{align}
For $J\ge1$, conditional on $(\alpha,J)$, draw $\theta$ from
\begin{equation}\label{eq:coeffprior}
 \widetilde\Pi_{\alpha,J}
 =
 \bigotimes_{j=1}^J N(0,\tau^2j^{-2\alpha-1}),
 \qquad
 \Pi_{\alpha,J}
 =
 \widetilde\Pi_{\alpha,J}
 \bigl(\,\cdot\mid\Theta_{\alpha,J}(R_*)\bigr).
\end{equation}
For $J=0$, set $\theta=0$. In every case, coefficients above $J$ are zero. The resulting coefficient prior is
\begin{equation}\label{eq:fullprior}
 \Pi_n
 =
 \sum_{\alpha\in\cA_n}w_n(\alpha)
 \sum_{J\ge0}\pi(J)\Pi_{\alpha,J}.
\end{equation}
We also use $\Pi_n$ for the joint prior on $(\alpha,J,\theta)$ and for the induced prior on densities; the argument specifies which is meant.

The weights $w_n$ penalize lower smoothness indices through their larger benchmark dimensions $t_n(\alpha)$. The geometric prior $\pi(J)$ separately penalizes large truncation levels.

Conditioning on $\Theta_{\alpha,J}(R_*)$ in \eqref{eq:coeffprior} enforces $\|\theta\|_{2,\alpha}\le R_*$ for every $J$. Under the unconditioned Gaussian distribution,
\[
 \E_{\widetilde\Pi_{\alpha,J}}\|\theta\|_{2,\alpha}^2
 =
 \tau^2\sum_{j=1}^J j^{-1},
\]
which grows with $J$. Each finite ellipsoid has positive Gaussian probability, so the conditional distribution is well defined. The positivity condition in \eqref{eq:trueclass} is imposed on the truth class, not on the prior draws.

\begin{samepage}
\subsection{Posterior contraction and predictive risk}
The prior does not use the true smoothness index $\alpha_0$. The next result shows that the posterior nevertheless contracts at the rate $\eps_n(\alpha_0)$. It also bounds the posterior probability of truncation levels and smoothness components whose complexity exceeds the benchmark $t_n(\alpha_0)$.

\begin{theorem}[Adaptive contraction]\label{thm:adaptive}
Suppose $g_0\le\overline g$, Assumption~\ref{ass:kernel} holds, and the constants satisfy \eqref{eq:range}. For the prior \eqref{eq:fullprior}, there exist $M,L,c,C>0$ and $n_0$ such that
\begin{equation}\label{eq:mainbound}
 \begin{split}
 \E_0\Pi_n\bigl\{&d_{G_0}(p_\theta,p_0)>M\eps_n(\alpha_0)
 \ \text{or}\ \max\{J,t_n(\alpha)\}>Lt_n(\alpha_0)
 \mid\cD_n\bigr\}\\
 &\hspace{5em}\le C e^{-ct_n(\alpha_0)},\qquad n\ge n_0,
 \end{split}
\end{equation}
uniformly over $\alpha_0\in[\alpha_-,\alpha_+]$, $p_0\in\cF_{\alpha_0}(R_0,b_0)$, and covariate densities bounded above by $\overline g$. The result holds for every fixed $A>0$. The constants and $n_0$ may depend on the fixed quantities in \eqref{eq:range}, the basis bounds $(C_K,\rho)$, $\overline g$, and the prior parameters $(c_\Delta,A,\lambda,\tau)$.

\end{theorem}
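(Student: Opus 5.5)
The proof follows the model-averaging scheme of \citet{GGV2000} and \citet{GLV2008}, adapted to give exact rates with exponential bounds. Write $t_0=t_n(\alpha_0)$ and $\eps_0=\eps_n(\alpha_0)$, and let $\beta_0$ be the largest grid point of $\cA_n$ not exceeding $\alpha_0$. Then $t_n(\beta_0)\asymp t_0$ uniformly. The argument has three ingredients. First, a lower bound on the evidence: with $P_0^n$-probability at least $1-Ce^{-ct_0}$, the integrated likelihood ratio satisfies
\[
\int\prod_i p_\theta(Y_i\mid X_i)/p_0(Y_i\mid X_i)\,d\Pi_n\ge e^{-C_1t_0}.
\]
Second, an upper bound on the prior mass of the complex components: the set $\{\max\{J,t_n(\alpha)\}>Lt_0\}$ has prior probability at most $e^{-(C_1+c)t_0}$. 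Third, tests of $p_0$ against $\{d_{G_0}(p_\theta,p_0)>M\eps_0\}$ restricted to the remaining sieve $\{J\le Lt_0,\ t_n(\alpha)\le Lt_0\}$, with both error probabilities bounded by $e^{-c'M^2t_0}$. Combining these in the usual way (Fubini for the complex part, the test for the far part, and the denominator bound) gives \eqref{eq:mainbound}.

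The second ingredient is immediate from the prior. The geometric prior gives $\pi(J>Lt_0)\le e^{-\lambda Lt_0}$. For the smoothness weights, the normalizer satisfies $\sum_\beta e^{-At_n(\beta)}\ge e^{-At_n(\alpha_+)}$, and $t_n(\alpha_+)=o(t_0)$ only if $\alpha_0<\alpha_+$. I will therefore bound more carefully:
\[
w_n(\alpha)\le e^{-A t_n(\alpha)+At_n(\alpha_+)}\le e^{-(A/2)t_n(\alpha)}\quad\text{whenever } t_n(\alpha)>Lt_0\ge 2t_n(\alpha_+).
\]
Summing over at most $O(\log n)$ grid points is absorbed because $t_0\ge n^{c}$. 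Taking $L$ large relative to $C_1/\min(\lambda,A)$ finishes this step.

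For the evidence bound I will restrict to the component $\alpha=\beta_0$, $J=J_n(\beta_0)$. Here $w_n(\beta_0)\ge e^{-At_n(\beta_0)}\ge e^{-Ct_0}$ and $\pi(J)\ge e^{-Ct_0}$. On this component I use the ball $\{\vartheta\in\R^J:\|\vartheta-\theta_0^{(J)}\|_2\le\eps_0\}$.

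\begin{itemize}
\item \emph{Approximation.} Truncation gives $\|\theta_0-\theta_0^{(J)}\|_2\le R_0J^{-\alpha_0}\lesssim\eps_0$. Lemma~\ref{lem:interpolation}, together with $\alpha_->\rho/2$, gives a sup-norm bound of $J^{\rho/2-\alpha}$ on the corresponding functions, so $h_\theta\ge b_0/4$ and $\|h_\theta\|_\infty\le C$ on this ball.
\item \emph{KL neighborhood.} With $h$ and $p_0$ bounded above and below, $K$ and $V$ are bounded by $C\|h_\theta-h_{\theta_0}\|_2^2$, which is at most $C\eps_0^2$. Points of the ball also lie in $\Theta_{\beta_0,J}(R_*)$ after a small shrinkage, because $R_0<R_*$ and the weighted norm of the perturbation is at most $J^{\alpha}\eps_0\lesssim1$. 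I must check this constant is small; if not, I shrink the ball radius to $\delta\eps_0$.
\item \emph{Gaussian small ball.} The ball has Gaussian mass at least $e^{-CJ\log(1/\eps_0)}$ by a crude estimate, but a $\log$ factor is not allowed. I will instead use a centered small-ball estimate: with variances $j^{-2\beta_0-1}$ and the centering shift having weighted norm $\le R_0$, the Cameron--Martin shift costs $\|\theta_0^{(J)}\|^2_{\mathbb H}/2\lesssim \sum j^{2\beta_0+1}\theta_j^2\le J R_0^2$, using $\beta_0\le\alpha_0$. The centered ball probability in the product measure is $\ge e^{-CJ}$ because $\eps_0\asymp J^{-\beta_0}$ matches the scale of the variances $\tau^2 j^{-2\beta_0-1}$ up to constants.
\end{itemize}

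This exact small-ball computation, without a log, is the main obstacle. I will handle it coordinatewise by requiring $|\vartheta_j-\theta_{0j}|\le c\,j^{-1/2}J^{-\beta_0}$ (whose squares sum to $\lesssim\eps_0^2\log J$, hence too big). I will therefore use $|\vartheta_j-\theta_{0j}|\le cJ^{-\beta_0-1/2}$. For $j\le J$ this interval has length comparable to the standard deviation $j^{-\beta_0-1/2}$ times $(j/J)^{\beta_0+1/2}$. The product of these probabilities is $\prod_j (j/J)^{\beta_0+1/2}\ge e^{-CJ}$ by Stirling, which gives exactly $e^{-Ct_0}$. Conditioning on the ellipsoid only increases mass. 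The standard exponential-moment argument then gives the evidence bound with probability $1-e^{-ct_0}$, with Bernstein controlled via bounded log-ratios on the ball.

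For the tests, on the sieve each component $(\alpha,J)$ lies in $\{\vartheta\in\R^J:\|\vartheta\|_2\le R_*\}$, and $d_{G_0}\le\sqrt{\overline g}\|\cdot\|_2$ by Proposition~\ref{prop:geometry}. The local entropy of a $J$-dimensional ball at scale $\eps$ inside radius $k\eps$ is $\le CJ$, independent of $\eps$; this is what avoids the log. I will use the Le Cam–Birgé tests for Hellinger in the conditional i.i.d.\ setting with shells $\{jM\eps_0<d\le(j+1)M\eps_0\}$. Summing over $J\le Lt_0$ and $L_n$ grid values adds a factor that is polynomial in $t_0$, which is absorbed. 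The errors are then $e^{CLt_0-cM^2t_0}$, and choosing $M$ large finishes the proof. Uniformity holds because all constants depend only on the listed quantities.
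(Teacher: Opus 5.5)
Your first two ingredients are sound and essentially coincide with the paper's Steps 1, 2 and 4. The coordinatewise cube of half-width $cJ^{-\beta_0-1/2}$ with the Stirling bound is Lemma~\ref{lem:smallball}, and the Bernstein denominator bound is Lemma~\ref{lem:denominator}. Your absolute bound on $w_n(\alpha)$ for $t_n(\alpha)>Lt_0$ works for every fixed $A>0$ once $L$ is chosen depending on $A$: your $C_1$ contains $A$, but so does the exponent $(A/2)L$.

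The gap is in the tests. You discard the ellipsoid restriction and keep only $\norm{\vartheta}_2\le R_*$ in $\R^J$. You then claim that the local entropy of a $J$-dimensional ball of radius $k\eps$ at scale $\eps$ is $O(J)$. That is true for Euclidean balls in coefficient space. The sets you must cover, however, are Hellinger shells $\{\theta:\norm{\theta}_2\le R_*,\ jM\eps_0<d_{G_0}(p_\theta,p_0)\le(j+1)M\eps_0\}$, and they must be covered in $d_{G_0}$. On your sieve the only comparison available is the one-sided bound $d_{G_0}(p_\theta,p_\eta)\le\sqrt{\overline g}\,\norm{\theta-\eta}_2$. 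The reverse inequality \eqref{eq:inverse-H} needs nonnegative bounded representations and $g_0\ge\underline g$, and neither holds here. Prior draws may change sign, and the theorem assumes only $g_0\le\overline g$, so coefficient directions where $g_0$ is small or zero are barely seen by $d_{G_0}$. A Hellinger ball about $p_0$ is therefore not contained in a coefficient ball of radius $O(k\eps_0)$. Your argument can only cover it through the full coefficient ball, which costs $J\log(R_*/\eps_0)\asymp t_0\log n$ for $J\asymp t_0$. For the innermost shell this forces $M^2\gtrsim\log n$, which is exactly the logarithmic loss you were trying to avoid.

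The missing idea is to use the ellipsoid constraint that the prior imposes. On your sieve the smallest retained grid point $\gamma$ has $t_n(\gamma)\le Lt_0$. Since $\norm{\theta}_{2,\gamma}\le\norm{\theta}_{2,\alpha}\le R_*$ for every retained $\alpha\ge\gamma$, all retained draws lie in $\Theta_\gamma(R_*)$, whatever $J$ is. Lemma~\ref{lem:entropy} and Proposition~\ref{prop:geometry} then give a global, log-free bound
\[
 \log N\bigl(M\eps_0/64,\{p_\theta:\theta\in\Theta_\gamma(R_*)\},d_{G_0}\bigr)
 \le C\eps_0^{-1/\gamma}
 =C\,t_n(\gamma)\{t_n(\gamma)/t_0\}^{1/(2\gamma)}
 \le CL^{1+1/(2\alpha_-)}t_0 .
\]
A single test from Lemma~\ref{lem:tests} then suffices, with no shells and no restriction on $J$ in the entropy; this is the paper's Step 3. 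The bound on $J$ is obtained separately from the geometric prior tail and the denominator, as you already do. The log-free mechanism is the ellipsoid geometry combined with discarding low-smoothness components, not finite-dimensional local entropy.
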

\end{samepage}
The expectation in \eqref{eq:mainbound} is over repeated samples from the true distribution. The bound controls the posterior probability of either a large density error or excessive model complexity. Its constants are common to the stated smoothness, truth, and covariate classes. In particular, the same prior attains the rate corresponding to each $\alpha_0$ in the prescribed interval.

The complexity conclusion is an upper bound on $J$ and $t_n(\alpha)$; it does not assert that either is asymptotically equal to its benchmark. Nor does it identify a unique smoothness index: a density can belong to several of the nested smoothness classes.

For each fixed true distribution, the posterior probability in \eqref{eq:mainbound} also tends to zero almost surely along an infinite iid sample. Appendix~\ref{app:main} proves the theorem and derives this consequence from the summable expectation bound.

If $\alpha_0$ is known, the prior $\Pi^{(\alpha_0)}=\sum_{J\ge0}\pi(J)\Pi_{\alpha_0,J}$ is independent of sample size and satisfies the same contraction and dimension bounds. The truncation level remains random.

The posterior predictive density averages the conditional densities under the posterior. The expectation bound in Theorem~\ref{thm:adaptive} also gives a frequentist risk bound for this estimator.

\begin{corollary}[Predictive risk]\label{cor:risk}
Under Theorem~\ref{thm:adaptive}, let $\bar p_n(y\mid x)=\int p_\theta(y\mid x)\,\Pi_n(d\theta\mid\cD_n)$. Uniformly over the classes in that theorem,
\begin{equation}\label{eq:risk}
 \E_0 d_{G_0}^2(\bar p_n,p_0)
 \le \E_0\int d_{G_0}^2(p_\theta,p_0)\,\Pi_n(d\theta\mid\cD_n)
 \le C\eps_n(\alpha_0)^2.
\end{equation}
\end{corollary}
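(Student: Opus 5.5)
The plan is to prove the two inequalities in \eqref{eq:risk} separately. The first is convexity and holds pointwise in the sample. The second comes from splitting the posterior mean of the squared Hellinger loss at the radius $M\eps_n(\alpha_0)$ given by Theorem~\ref{thm:adaptive}. The loss is bounded, so the exponential tail bound from that theorem contributes only a negligible term.

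\emph{First inequality.} Fix $x$. The map $p\mapsto\int(\sqrt p-\sqrt{p_0(\cdot\mid x)})^2\,d\mu_y=2-2\int\sqrt{p\,p_0(\cdot\mid x)}\,d\mu_y$ is convex on conditional densities, because $p\mapsto\sqrt p$ is concave pointwise and integration against the nonnegative weight $\sqrt{p_0}$ preserves concavity. Applying Jensen's inequality to the posterior mixture $\bar p_n(\cdot\mid x)$ gives $\int\sqrt{\bar p_n p_0}\,d\mu_y\ge\int\!\int\sqrt{p_\theta p_0}\,d\mu_y\,\Pi_n(d\theta\mid\cD_n)$. We then integrate over $x$ against $G_0$ and use Fubini, which is justified since all integrands are nonnegative and jointly measurable in $(\theta,y,x)$. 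This yields $d_{G_0}^2(\bar p_n,p_0)\le\int d_{G_0}^2(p_\theta,p_0)\,\Pi_n(d\theta\mid\cD_n)$ for every sample. We also check that $\bar p_n$ is a conditional density, again by Fubini. Taking $\E_0$ preserves the inequality.

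\emph{Second inequality.} Let $B_n=\{\theta:d_{G_0}(p_\theta,p_0)\le M\eps_n(\alpha_0)\}$. Since $d_{G_0}^2\le2$,
\[
 \int d_{G_0}^2(p_\theta,p_0)\,\Pi_n(d\theta\mid\cD_n)\le M^2\eps_n(\alpha_0)^2+2\,\Pi_n(B_n^c\mid\cD_n).
\]
The event $B_n^c$ is contained in the event appearing in \eqref{eq:mainbound}. Hence, for $n\ge n_0$, $\E_0\Pi_n(B_n^c\mid\cD_n)\le Ce^{-ct_n(\alpha_0)}$. It remains to show $e^{-ct_n(\alpha_0)}\lesssim\eps_n(\alpha_0)^2=t_n(\alpha_0)/n$ uniformly over $\alpha_0$. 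This holds because $t_n(\alpha_0)\ge n^{1/(2\alpha_++1)}$, so $e^{-ct_n(\alpha_0)}\le\exp(-cn^{1/(2\alpha_++1)})$, which is $o(n^{-1})$, while $t_n(\alpha_0)/n\ge n^{-1}$. For $n<n_0$ the loss is bounded by $2$ and $\eps_n(\alpha_0)^2\ge n_0^{-1}$, so enlarging $C$ covers these finitely many $n$.

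Uniformity follows because $M,C,c,n_0$ in Theorem~\ref{thm:adaptive} are common to the classes. The only mildly delicate point is the measurability and Fubini justification in the Jensen step; everything else is immediate from the exponential bound.
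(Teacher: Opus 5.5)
Your proposal is correct and follows the paper's proof. The first inequality comes from pointwise concavity of the square root applied to the posterior mixture, after checking via Fubini/Tonelli that $\bar p_n$ is a conditional density. The second comes from splitting the posterior loss at $M\eps_n(\alpha_0)$, using $d_{G_0}^2\le 2$ and the exponential bound of Theorem~\ref{thm:adaptive}, with $e^{-ct_n(\alpha_0)}$ negligible uniformly because $t_n(\alpha_0)\ge n^{1/(2\alpha_++1)}$; your explicit handling of the finitely many $n<n_0$ is a small detail the paper leaves implicit.
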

The first inequality follows from convexity of squared Hellinger loss in the estimated density. For the second, split the posterior integral according to whether the distance exceeds $M\eps_n(\alpha_0)$. Since squared Hellinger distance is at most two,
\[
 \E_0\int d_{G_0}^2(p_\theta,p_0)\,\Pi_n(d\theta\mid\cD_n)
 \le M^2\eps_n(\alpha_0)^2+2Ce^{-ct_n(\alpha_0)}.
\]
The exponential term is smaller than the squared contraction rate, uniformly over the stated classes. This gives the bound for both a posterior draw, averaged over the posterior, and the posterior predictive density. Appendix~\ref{app:predictive-details} gives the details.

\subsection{Minimax optimality}\label{sec:minimax}
The preceding corollary gives an upper bound on the risk of the posterior predictive density. We now show that no conditional-density estimator can attain a smaller order of worst-case risk over the same smoothness class, even when the estimator knows the smoothness index and the covariate distribution.

\begin{theorem}[Minimax lower bound]\label{thm:minimax}
Under Assumptions~\ref{ass:design}--\ref{ass:kernel} and \eqref{eq:range},
\begin{equation}\label{eq:minimax}
 \inf_{\widehat p_n}\sup_{p_0\in\cF_\alpha(R_0,b_0)}
 \E_{p_0,G_0}^n d_{G_0}^2(\widehat p_n,p_0)
 \ge c n^{-2\alpha/(2\alpha+1)},\qquad n\ge1,
\end{equation}
for a constant $c>0$ uniform over $\alpha\in[\alpha_-,\alpha_+]$ and $G_0\in\Gclass(\underline g,\overline g)$. The constant may depend on the fixed class bounds, smoothness endpoints, and basis-growth constants. The infimum is over measurable conditional-density estimators, including estimators that know $\alpha$ and $G_0$.
\end{theorem}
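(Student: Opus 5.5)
We use Assouad's lemma over a hypercube of perturbations of the constant representation $h\equiv1$ (that is, $p_0\equiv1$). Fix $\alpha\in[\alpha_-,\alpha_+]$ and $n$, let $m=\lceil t_n(\alpha)\rceil=J_n(\alpha)$, and take the block of indices $j\in\{m+1,\ldots,2m\}$. For $\omega\in\{0,1\}^m$ set
\[
 \theta^\omega_{m+i}=\gamma\,\omega_i\,m^{-\alpha-1/2},\qquad i=1,\ldots,m,
\]
with all other coefficients zero, and $\gamma>0$ a small constant chosen independently of $n$ and $\alpha$. We then carry out three checks.

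\textbf{Membership.} We have $\|\theta^\omega\|_{2,\alpha}^2\le\gamma^2\sum_{i\le m}(2m)^{2\alpha}m^{-2\alpha-1}\le\gamma^2 4^{\alpha_+}$, which is at most $R_0^2$ for small $\gamma$. By Assumption~\ref{ass:kernel} and Cauchy--Schwarz,
\[
 \|h_{\theta^\omega}-1\|_\infty\le\sqrt{C_K}(2m)^{\rho/2}\gamma m^{-\alpha}\le C\gamma m^{\rho/2-\alpha}\le C\gamma,
\]
since $\alpha\ge\alpha_->\rho/2$. Hence $h_{\theta^\omega}\ge1-C\gamma\ge b_0$ and $h_{\theta^\omega}\le2$ for $\gamma$ small, uniformly in $\alpha$. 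Each $p_{\theta^\omega}$ therefore lies in $\cF_\alpha(R_0,b_0)$, and $h_{\theta^\omega}$ is its normalized square-root representation.

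\textbf{Separation.} By \eqref{eq:inverse-H} with $B=2$ and orthonormality,
\[
 d_{G_0}^2(p_{\theta^\omega},p_{\theta^{\omega'}})\ge\frac{\underline g}{6}\,\|\theta^\omega-\theta^{\omega'}\|_2^2=\frac{\underline g}{6}\,\gamma^2m^{-2\alpha-1}\,H(\omega,\omega'),
\]
where $H$ is Hamming distance. This gives per-coordinate separation of order $m^{-2\alpha-1}$. Because $d_{G_0}$ is a metric, the usual Assouad reduction (projecting an arbitrary estimator $\widehat p_n$ onto the closest hypercube vertex, at the cost of a factor $1/4$) lets us work with $d_{G_0}^2$ directly. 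In the form using the triangle inequality, Assouad's lemma states
\[
 \inf_{\widehat p}\max_\omega\E_\omega d^2(\widehat p,p_\omega)\ge\frac{m}{8}\,\min_{H(\omega,\omega')=1}d^2(p_\omega,p_{\omega'})\,\min_{H=1}\bigl(1-\TV(P^n_\omega,P^n_{\omega'})\bigr).
\]

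\textbf{Affinity.} For neighbours $\omega,\omega'$ we bound the total variation of the $n$-fold joint laws, which share the same $G_0$, through the Hellinger distance of the joint laws:
\[
 H^2(P_\omega^n,P_{\omega'}^n)\le n\,d_{G_0}^2(p_\omega,p_{\omega'})\le n\,\overline g\,\|\theta^\omega-\theta^{\omega'}\|_2^2=n\,\overline g\,\gamma^2m^{-2\alpha-1}\le\overline g\,\gamma^2,
\]
using \eqref{eq:global-H} and $m^{2\alpha+1}\ge n$. Then $\TV\le H\le\sqrt{\overline g}\,\gamma\le1/2$ for small $\gamma$. Assouad's lemma now yields a lower bound of order
\[
 m\cdot m^{-2\alpha-1}=m^{-2\alpha}\asymp n^{-2\alpha/(2\alpha+1)},
\]
because $m\le t_n(\alpha)+1\le2t_n(\alpha)$, with constants depending only on $\alpha_\pm,C_K,\rho,R_0,b_0,\underline g,\overline g$. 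Since the hypercube is built without reference to $\widehat p_n$, the infimum may include estimators that know $\alpha$ and $G_0$.

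The main obstacle is the step showing that all hypercube vertices stay inside the truth class uniformly in $\alpha$ and $n$, specifically the positivity $h\ge b_0$. This requires the sup-norm control from Assumption~\ref{ass:kernel} together with $\alpha_->\rho/2$, and we must check that the factor $(2m)^{\rho/2}m^{-\alpha}$ is bounded for all $m\ge1$, not only asymptotically. It is, since $2^{\rho/2}m^{\rho/2-\alpha}\le2^{\rho/2}$. Small $n$ cause no trouble either: $m\ge1$ always, and the bound $c\,n^{-2\alpha/(2\alpha+1)}$ still follows, because the per-coordinate argument works already with $m=1$. If the form of Assouad's lemma with squared metric loss proves awkward, the alternative is Fano's method combined with a Varshamov--Gilbert subset of the hypercube, with KL divergence controlled via $K\le C d^2$. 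The latter holds because all densities in the family are bounded above and below, namely $b_0^2/4\le p\le4/b_0^2$.
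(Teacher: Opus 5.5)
Your proposal is correct and follows essentially the same route as the paper. Both use an Assouad hypercube on the coefficients $m+1,\ldots,2m$ at scale $m^{-\alpha-1/2}$ around $h=1$, with membership checked via Assumption~\ref{ass:kernel} and $\alpha_->\rho/2$, separation via \eqref{eq:inverse-H}, nearest-vertex projection, and per-coordinate testing. The only real difference is how you bound the neighbouring-sample Hellinger distance: you go directly through \eqref{eq:global-H}, whereas the paper uses the Kullback--Leibler bound of Lemma~\ref{lem:localKL} together with $H^2\le\KL$.

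One small correction: the displayed form of Assouad's lemma must use the additive lower bound $d_{G_0}^2(p_\omega,p_{\omega'})\ge c\gamma^2m^{-2\alpha-1}d_H^{\mathrm{bit}}(\omega,\omega')$ over all pairs. The minimum over neighbours alone does not suffice in general. Your separation step already proves exactly this additive bound, so the argument goes through.
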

Together with Corollary~\ref{cor:risk}, \eqref{eq:minimax} establishes minimax squared Hellinger risk of order $n^{-2\alpha/(2\alpha+1)}$. The posterior predictive density attains this order without knowing $\alpha$, so adaptation introduces no logarithmic loss in this bound. Unlike the upper risk bound, this matching lower bound uses a positive lower bound on the covariate density. Appendix~\ref{app:minimax} proves it using perturbations of the reference representation $h=1$.

\section{Functionals and uniform convergence}\label{sec:functionals}
We next consider quantities obtained from the estimated conditional distribution and stronger notions of convergence. Integrated Hellinger contraction directly controls some conditional functionals. Recovering the series coefficients and obtaining uniform convergence require additional assumptions.

\subsection{Conditional distribution functions, moments, and quantiles}
For a real outcome, write
\[
 F_p(t\mid x)=\int_{(-\infty,t]}p(y\mid x)\,\mu_y(dy),
 \qquad
 m_{a,p}(x)=\int a(y,x)p(y\mid x)\,\mu_y(dy),
\]
where the first integral is restricted to $\mathcal Y$ and $a$ is a bounded measurable function. These quantities include conditional probabilities and, on a bounded outcome support, conditional means and second moments. The next proposition bounds their errors by the Hellinger distance at each covariate value. After integration over $G_0$, the density-risk bounds therefore give bounds on their average squared errors across covariates.

Quantiles require an additional condition: the true distribution function must increase sufficiently quickly near the quantiles of interest. For a bound uniform over all quantile indices, we impose a positive lower bound on the true Lebesgue density throughout the outcome interval.

\begin{proposition}[Conditional functionals]\label{prop:functionals}
Let $p,q$ be conditional densities and put $H_x^2=\int(\sqrt p-\sqrt q)^2\,d\mu_y$. For a real outcome, their conditional distribution functions satisfy
\begin{align}
 \sup_t|F_p(t\mid x)-F_q(t\mid x)|&\le H_x,\label{eq:cdf}\\
 \left|\int a(y,x)(p-q)(y\mid x)\,\mu_y(dy)\right|
 &\le2\|a\|_\infty H_x\label{eq:moment}
\end{align}
for any bounded measurable $a$, for almost every $x$. If $\mathcal Y=[y_-,y_+]$, the conditional distributions have Lebesgue densities, and the true density $f_0$ is bounded below by $m>0$ on $[y_-,y_+]\times\mathcal X$, then the generalized conditional quantiles of $p$ and $p_0$ satisfy
\begin{equation}\label{eq:quantile}
 \sup_{0<\tau<1}|Q_p(\tau\mid x)-Q_0(\tau\mid x)|
 \le m^{-1}\sup_t|F_p(t\mid x)-F_0(t\mid x)|.
\end{equation}
For a target covariate distribution $G_*$ with $dG_*/dG_0\le C_*$,
\begin{equation}\label{eq:overlap}
 d_{G_*}^2(p,q)\le C_*d_{G_0}^2(p,q).
\end{equation}
\end{proposition}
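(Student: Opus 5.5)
The plan is to prove each of the four displays separately. Everything is done at a fixed covariate value $x$ outside a $\mu_x$-null set, except \eqref{eq:overlap}. The basic tool is the factorization $p-q=(\sqrt p-\sqrt q)(\sqrt p+\sqrt q)$ together with Cauchy--Schwarz in $L_2(\mu_y)$.

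For \eqref{eq:moment}, bound the integral by
\[
 \|a\|_\infty\int|\sqrt p-\sqrt q|(\sqrt p+\sqrt q)\,d\mu_y
 \le\|a\|_\infty H_x\Bigl(\int(\sqrt p+\sqrt q)^2\,d\mu_y\Bigr)^{1/2}.
\]
Since $\int(\sqrt p+\sqrt q)^2\,d\mu_y\le2\int(p+q)\,d\mu_y=4$, this gives the constant $2\|a\|_\infty$.

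For \eqref{eq:cdf}, the same argument with $a=\ind_{(-\infty,t]}$ only gives $2H_x$, so a sharper step is needed. Write $A=(-\infty,t]\cap\mathcal Y$ and use the complement. We have $F_p-F_q=\int_A(p-q)\,d\mu_y=-\int_{A^c}(p-q)\,d\mu_y$. Apply Cauchy--Schwarz separately on $A$ and on $A^c$, and put $u=\int_A(\sqrt p-\sqrt q)^2\,d\mu_y$ and $v=\int_A(\sqrt p+\sqrt q)^2\,d\mu_y$, with primed versions on $A^c$. Then
\[
 |F_p-F_q|\le\min\{\sqrt{uv},\sqrt{u'v'}\},\qquad u+u'=H_x^2,\qquad v+v'=4-H_x^2.
\]
From $\min\{\sqrt{uv},\sqrt{u'v'}\}\le\sqrt{(uv+u'v')/2}$ and the bound $uv+u'v'\le(u+u')\max\{v,v'\}$ alone, the result is not obviously at most $H_x$. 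A cleaner route is to use the total variation inequality $\sup_t|F_p-F_q|\le\frac12\int|p-q|\,d\mu_y$. Then $\frac12\int|p-q|\le\frac12H_x\cdot(4-H_x^2)^{1/2}\le H_x$, by Cauchy--Schwarz with $\int(\sqrt p+\sqrt q)^2=4-H_x^2$. I take this route; its key step is $\int(\sqrt p+\sqrt q)^2=2\int(p+q)-\int(\sqrt p-\sqrt q)^2$.

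For \eqref{eq:quantile}, fix $\tau\in(0,1)$ and $x$, and put $\delta=\sup_t|F_p-F_0|$. Since $f_0\ge m$ on $[y_-,y_+]$, $F_0(\cdot\mid x)$ is continuous and strictly increasing with slope at least $m$, so $Q_0(\tau)$ is its genuine inverse. Write $y_0=Q_0(\tau)$. If $y\ge y_0+\delta/m$ lies in the support, then $F_p(y)\ge F_0(y)-\delta\ge\tau$, so $Q_p(\tau)\le y_0+\delta/m$; if $y_0+\delta/m>y_+$ the bound holds trivially because $Q_p\le y_+$. Conversely, if $y<y_0-\delta/m$, then $F_p(y)\le F_0(y)+\delta<\tau$ by strict monotonicity, so $Q_p(\tau)\ge y_0-\delta/m$. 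The main care is with the boundary cases and strict versus non-strict inequalities in the generalized inverse $Q_p(\tau)=\inf\{t:F_p(t)\ge\tau\}$; taking the supremum over $\tau$ then finishes this step.

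Finally, \eqref{eq:overlap} is immediate. Write $d_{G_*}^2(p,q)=\int H_x^2\,(dG_*/dG_0)(x)\,G_0(dx)$, which requires $G_*\ll G_0$ as implicit in the hypothesis, and bound the density ratio by $C_*$. I expect the only genuinely delicate point to be the quantile inversion; the remaining steps are standard Hellinger inequalities.
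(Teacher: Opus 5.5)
Your proposal is correct. For \eqref{eq:moment}, \eqref{eq:cdf}, and \eqref{eq:overlap} it follows the paper's argument. Cauchy--Schwarz on $p-q=(\sqrt p-\sqrt q)(\sqrt p+\sqrt q)$ gives $\int|p-q|\,d\mu_y\le 2H_x$, the event $\{Y\le t\}$ costs at most half of this, and \eqref{eq:overlap} is integration against $dG_*/dG_0\le C_*$. Your sharper factor $(4-H_x^2)^{1/2}$ is correct but unnecessary, and you can drop the abandoned complement-splitting attempt.

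The quantile step takes a different route. The paper uses continuity of both conditional CDFs to write $F_p(Q_p(\tau\mid x)\mid x)=F_0(Q_0(\tau\mid x)\mid x)=\tau$. It then applies $|F_0(v\mid x)-F_0(u\mid x)|\ge m|v-u|$ with $u=Q_0(\tau\mid x)$ and $v=Q_p(\tau\mid x)$, which gives $m|Q_p-Q_0|\le|F_0(Q_p)-F_p(Q_p)|$ in one line. You instead bracket $Q_p(\tau\mid x)$ in $[y_0-\delta/m,\,y_0+\delta/m]$ directly from the definition of the generalized inverse. Your route is longer and needs boundary bookkeeping. However, it uses continuity only of $F_0$, so it also covers candidate distributions $p$ with atoms. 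The paper's shortcut relies on the hypothesis that every conditional distribution involved, not just the true one, has a Lebesgue density.

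One detail needs fixing. On the lower side, your chain $F_p(y\mid x)\le F_0(y\mid x)+\delta<\tau$ uses the slope bound, which fails for $y<y_-$: there $F_0(y\mid x)=0$, and $\delta<\tau$ need not hold. For such $y$, use $F_p(y\mid x)=0<\tau$ instead, which mirrors your treatment of the case $y_0+\delta/m>y_+$.
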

In particular, the losses
\begin{align*}
 L_F(p,p_0)&=\int\sup_t|F_p(t\mid x)-F_0(t\mid x)|^2\,G_0(dx),\\
 L_a(p,p_0)&=\int\left|\int a(y,x)(p-p_0)(y\mid x)\,\mu_y(dy)\right|^2G_0(dx)
\end{align*}
satisfy $L_F\le d_{G_0}^2$ and $L_a\le4\|a\|_\infty^2d_{G_0}^2$. Corollary~\ref{cor:risk} therefore bounds the expected posterior-average and predictive losses by $C\eps_n(\alpha_0)^2$. Under the quantile assumptions, \eqref{eq:quantile} also gives
\[
 \int\sup_{0<\tau<1}|Q_p(\tau\mid x)-Q_0(\tau\mid x)|^2\,G_0(dx)
 \le m^{-2}L_F(p,p_0).
\]
Its expected posterior-average and predictive losses have the same upper rate. Here the predictive quantile is $Q_{\bar p_n}$, the quantile of the posterior predictive distribution; it is not the posterior average of the quantiles of individual draws. Appendix~\ref{app:functional-details} proves the inequalities and verifies measurability.

These bounds average over covariates. They do not by themselves give pointwise or uniform-in-$x$ rates. The quantile condition concerns the Lebesgue density $f_0$, rather than the density $p_0$ relative to an arbitrary reference measure.

Equation~\eqref{eq:overlap} permits the same losses to be averaged under a different covariate distribution $G_*$, provided $dG_*/dG_0$ is bounded and the conditional outcome distribution is unchanged. The results concern estimation error; unbounded moments, root-$n$ inference, and coverage of credible sets require further arguments.

\subsection{Coefficient recovery and uniform convergence}
Density contraction does not immediately imply coefficient contraction. The map $h\mapsto p_h$ squares the representation, and the prior permits $h_\theta$ to take negative values. The inverse inequality in Proposition~\ref{prop:geometry}, by contrast, applies to bounded nonnegative representations.

The next assumptions allow us to bridge this gap. Within the prior class, representations that take a nonpositive value are separated in Hellinger distance from the positive truth class. Density contraction then excludes those representations, after which the inverse inequality gives coefficient recovery.

\begin{assumption}[Supports and continuity]\label{ass:compact}
The spaces $\mathcal Y$ and $\mathcal X$ are compact metric spaces, $\mathcal Y$ is connected, both reference measures have full support, and every basis function is continuous.
\end{assumption}
Continuity and full support make the basis-growth bound hold at every $(y,x)$ and the zero outcome integrals hold at every $x$. Together with $\alpha_->\rho/2$, they also give uniform convergence of the series. Connectedness of the outcome support prevents a continuous representation from changing sign in $y$ without vanishing. The integral-one restriction then fixes the sign of any representation that never vanishes.

\begin{theorem}[Coefficient and uniform rates]\label{thm:uniform}
Maintain Theorem~\ref{thm:adaptive}, impose Assumption~\ref{ass:compact}, and suppose $g_0\ge\underline g>0$. Set
\[
 u_n(\alpha_0)=\eps_n(\alpha_0)^{1-\rho/(2\alpha_0)}
             =n^{-(\alpha_0-\rho/2)/(2\alpha_0+1)}.
\]
There are uniform constants $M,c,C>0$ such that, for all sufficiently large $n$,
\begin{equation}\label{eq:uniform-contraction}
 \begin{split}
 \E_0\Pi_n\bigl\{&\|\theta-\theta_0\|_2>M\eps_n(\alpha_0)
   \ \text{or}\ \|h_\theta-h_0\|_\infty>Mu_n(\alpha_0)\\
 &\text{or}\ \|p_\theta-p_0\|_\infty>Mu_n(\alpha_0)
   \ \text{or}\ \min_z h_\theta(z)\le b_0/2\mid\cD_n\bigr\}
 \le Ce^{-ct_n(\alpha_0)}.
 \end{split}
\end{equation}
The expected posterior-average squared coefficient loss is $O(\eps_n(\alpha_0)^2)$, and the corresponding squared uniform losses are $O(u_n(\alpha_0)^2)$. The constants and the threshold for $n$ may depend on the fixed supports, reference measures, and basis in Assumption~\ref{ass:compact}, as well as $\underline g$ and the fixed quantities in Theorem~\ref{thm:adaptive}. They are uniform over that theorem's smoothness and truth classes and $G_0\in\Gclass(\underline g,\overline g)$.
\end{theorem}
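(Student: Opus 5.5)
The plan is to intersect the high-probability event of Theorem~\ref{thm:adaptive} with a deterministic separation statement. On that event, every posterior draw with $d_{G_0}(p_\theta,p_0)\le M\eps_n(\alpha_0)$, $J\le Lt_n(\alpha_0)$ and $t_n(\alpha)\le Lt_n(\alpha_0)$ will be shown to satisfy all four requirements in \eqref{eq:uniform-contraction}. Concretely, let $S_n$ be the set of $(\alpha,J,\theta)$ in the prior support with $\alpha\in\cA_n$, $\|\theta\|_{2,\alpha}\le R_*$, $J\le Lt_n(\alpha_0)$, $t_n(\alpha)\le Lt_n(\alpha_0)$ and $d_{G_0}(p_\theta,p_0)\le M\eps_n(\alpha_0)$. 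The goal is to show that, for $n$ large and uniformly over the classes, every element of $S_n$ has $\min_z h_\theta\ge b_0/2$, $\|\theta-\theta_0\|_2\le M'\eps_n(\alpha_0)$, and the two sup-norm bounds of order $u_n(\alpha_0)$. The posterior bound then follows from \eqref{eq:mainbound}. The squared-loss statements follow as in Corollary~\ref{cor:risk}, using that $\|\theta-\theta_0\|_2$, $\|h_\theta\|_\infty$ and $\|p_\theta\|_\infty$ are bounded on the prior support.

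\textbf{Step 1 (uniform bounds on the prior support).} Every prior draw has $\|\theta\|_{2,\alpha}\le R_*$ with $\alpha\ge\alpha_->\rho/2$. Assumption~\ref{ass:kernel} and Lemma~\ref{lem:interpolation} then give $\|h_\theta\|_\infty\le B$ with $B$ uniform. They also give equicontinuity-type control: the tail $\sum_{j>m}\theta_j\phi_j$ is uniformly at most $Cm^{\rho/2-\alpha_-}$. Under Assumption~\ref{ass:compact} each $h_\theta$ is continuous, so the bounds hold pointwise. The truth satisfies $\|h_0\|_\infty\le B$ and $h_0\ge b_0$.

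\textbf{Step 2 (positivity, the main obstacle).} We must exclude draws with $\min h_\theta\le b_0/2$ while only knowing that $p_\theta$ is close to $p_0$ in Hellinger distance. Note that $\sqrt{p_\theta}=|h_\theta|/\sqrt{Q_\theta}$ and $\sqrt{p_0}=h_0/\sqrt{Q_0}$.

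First, a compactness argument. The set $\mathcal K=\{\theta:\|\theta\|_{2,\alpha_-}\le R_*\}$ is compact in $\ell_2$ and maps continuously into $C(\mathcal Y\times\mathcal X)$. The set of truths is likewise compact in $\ell_2$. The map $(\theta,\theta_0)\mapsto d_{G_0}(p_\theta,p_0)$ is bounded below by $\sqrt{\underline g}$ times the $\mu$-integrated distance, which is continuous. Suppose $\min h_\theta\le b_0/2$ while this distance is zero. Then $|h_\theta|/\sqrt{Q_\theta}=h_0/\sqrt{Q_0}$ everywhere, by continuity and full support, so $h_\theta$ never vanishes. Connectedness of $\mathcal Y$ fixes the sign in each $x$-section, and the integral-one condition makes it positive. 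Hence $h_\theta=h_{p_\theta}=h_0$ by Proposition~\ref{prop:geometry}, contradicting $\min h_\theta\le b_0/2<b_0\le\min h_0$. Compactness therefore yields $\delta>0$ with $d_{G_0}(p_\theta,p_0)\ge\delta$ whenever $\min h_\theta\le b_0/2$, uniformly in $G_0\in\Gclass$. Since $M\eps_n(\alpha_0)<\delta$ for large $n$, such draws lie outside $S_n$.

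The delicate point is that $\alpha$ varies along the grid. This is handled by working in the single compact set $\mathcal K$, since $\|\cdot\|_{2,\alpha}\ge\|\cdot\|_{2,\alpha_-}$. If the compactness argument proves awkward, a fallback is a quantitative bound: a zero of $h_\theta$ at $z^*$, combined with uniform Hölder-type modulus control from Step 1, forces $\sqrt{p_\theta}$ to be small on a set of $\mu$-measure bounded below, while $\sqrt{p_0}\ge b_0/\sqrt{Q_0}$ is bounded below there.

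\textbf{Step 3 (coefficients and sup norms).} On $S_n$ with $h_\theta\ge b_0/2$, both representations are nonnegative, bounded by $B$, and normalized. Inequality \eqref{eq:inverse-H} then gives $\|\theta-\theta_0\|_2=\|h_\theta-h_0\|_2\lesssim\eps_n(\alpha_0)$.

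For the sup norm, split at $m=J_n(\alpha_0)$. The head satisfies
\[
\Bigl\|\sum_{j\le m}(\theta_j-\theta_{0j})\phi_j\Bigr\|_\infty\le\sqrt{C_K}\,m^{\rho/2}\eps_n(\alpha_0).
\]
The tails of $\theta$ and $\theta_0$ contribute at most $Cm^{\rho/2-\alpha}$ and $Cm^{\rho/2-\alpha_0}$ respectively, via Lemma~\ref{lem:interpolation}. Here we need $\alpha$ not much smaller than $\alpha_0$. This follows from $t_n(\alpha)\le Lt_n(\alpha_0)$, which forces $1/(2\alpha+1)\le1/(2\alpha_0+1)+\log L/\log n$, so $m^{-\alpha}\lesssim m^{-\alpha_0}$ up to a constant. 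Each piece is then $\lesssim m^{\rho/2}\eps_n(\alpha_0)=u_n(\alpha_0)$. Finally, for $\|p_\theta-p_0\|_\infty$, write $p_\theta-p_0=h_\theta^2/Q_\theta-h_0^2/Q_0$ and use $Q\ge1$, $|Q_\theta-Q_0|\le\|h_\theta-h_0\|_2\|h_\theta+h_0\|_2$ (applied sectionwise, bounded by $2B\|h_\theta-h_0\|_\infty$), and boundedness of the $h$'s. This gives $\|p_\theta-p_0\|_\infty\lesssim\|h_\theta-h_0\|_\infty$.
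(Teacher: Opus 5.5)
Your proposal is correct and follows the paper's proof. A compactness and separation argument (continuity, full support, connectedness of $\mathcal Y$, the integral-one normalization, and $g_0\ge\underline g$) excludes non-positive draws uniformly; the inverse inequality \eqref{eq:inverse-H} gives the coefficient rate; and the complexity bound $t_n(\alpha)\le Lt_n(\alpha_0)$ forces $\alpha_0-\alpha=O(1/\log n)$, so your explicit head/tail split, which is the interpolation bound \eqref{eq:interpolation} written out, yields the $u_n(\alpha_0)$ sup-norm rate. The differences are cosmetic: you separate $\{\min h_\theta\le b_0/2\}$ directly, whereas the paper separates $\{\min h_\theta\le 0\}$ and recovers $\min h_\theta>b_0/2$ afterwards from the uniform rate.
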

The coefficient rate is the same as the integrated-Hellinger rate. The uniform rate is slower: it controls the largest error over all outcomes and covariates, and depends on the basis-growth exponent $\rho$. The condition $\alpha_0>\rho/2$ makes $1-\rho/(2\alpha_0)$ positive, so $u_n(\alpha_0)\to0$.

Appendix~\ref{app:functional-details} first excludes nonpositive representations, then applies the inverse Hellinger inequality and interpolation. The complexity bound in Theorem~\ref{thm:adaptive} ensures that retained smoothness indices are no more than $O(1/\log n)$ below $\alpha_0$, which permits the stated uniform rate under the adaptive prior. These are upper bounds; minimax optimality under uniform loss is not established here.

\section{Polynomial bases and smoothness}\label{sec:polynomials}
\subsection{Polynomial degree and contraction rates}
The smoothness index $\alpha$ used so far describes decay with respect to the position of a coefficient in the chosen ordering. To interpret it for polynomial series, we now relate coefficient position to polynomial degree. The number of terms of a given maximum degree depends on the total number of outcome and covariate coordinates.

Let $\mathcal Y=[-1,1]$, $\mathcal X=[-1,1]^{d_x}$, and $q=d_x+1$. For a tensor polynomial $\Phi_\nu$ with multi-index $\nu\in\N_0^q$, retain $\nu_1\ge1$ so that its outcome integral is zero. Order the terms by total degree $|\nu|_1$, with a fixed rule for ties. The number of terms through degree $M$ is
\begin{equation}\label{eq:count}
 J_M=\binom{M+q}{q}-\binom{M+q-1}{q-1}
     =\binom{M+q-1}{q}\asymp M^q.
\end{equation}
Thus retaining terms through degree $M$ uses order $M^q$ coefficients. Define
\begin{equation}\label{eq:degree-norm}
 \|h-1\|_{s,\mathrm{deg}}^2
 =\sum_{\nu:\nu_1\ge1}(1+|\nu|_1)^{2s}\theta_\nu^2.
\end{equation}
The index $s$ controls decay with polynomial degree, rather than with coefficient position. If $d(j)$ is the degree of term $j$, \eqref{eq:count} gives $1+d(j)\asymp j^{1/q}$. Consequently,
\[
 \|h-1\|_{s,\mathrm{deg}}\asymp\|\theta\|_{2,s/q},
 \qquad \alpha=s/q,
\]
with comparison constants uniform on compact intervals of $s$. In particular, $j^{1/q}\le1+d(j)$ gives $\|\theta\|_{2,s/q}\le\|h-1\|_{s,\mathrm{deg}}$. A degree-$M$ truncation therefore has approximation error of order $M^{-s}$ and uses order $M^q$ terms. Balancing $M^{-s}$ with $\sqrt{M^q/n}$ gives the rate $n^{-s/(2s+q)}$.

An orthogonal change of basis within a degree level preserves the degree norm. The index-weighted norm and the prior can nevertheless depend on the ordering within that level. The remaining basis-specific requirement is the growth bound in Assumption~\ref{ass:kernel}, which the next proposition verifies.

\begin{proposition}[Legendre and Chebyshev basis bounds]\label{prop:classical}
For tensor Legendre polynomials under uniform product probability measure, total-degree ordering satisfies Assumption~\ref{ass:kernel} with $\rho=2$. At $z_*=(1,\ldots,1)$,
\begin{equation}\label{eq:legendre-sharp}
 cJ^2\le\sum_{j\le J}\phi_j(z_*)^2
 \le\sup_z\sum_{j\le J}\phi_j(z)^2\le CJ^2.
\end{equation}
For tensor Chebyshev polynomials of the first kind under product arcsine probability measure, any ordering of distinct terms satisfies
\begin{equation}\label{eq:cheb-sharp}
 J\le\sum_{j\le J}\phi_j(z_*)^2
 \le\sup_z\sum_{j\le J}\phi_j(z)^2\le2^qJ.
\end{equation}
Hence $\rho=1$ for Chebyshev series. Both growth exponents are sharp.
\end{proposition}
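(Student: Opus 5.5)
The plan is to reduce both claims to one-dimensional facts about Legendre and Chebyshev polynomials and then sum over the total-degree ordering. Write $\phi_j=\Phi_\nu=\prod_{i=1}^q P_{\nu_i}(z_i)$, with $P_k$ the orthonormal univariate polynomials.

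\textbf{Legendre.} Under the uniform probability measure on $[-1,1]$ the orthonormal Legendre polynomials are $P_k=\sqrt{2k+1}\,L_k$ with $|L_k|\le1$ and $L_k(1)=1$. Hence
\[
\sup_z\Phi_\nu(z)^2=\Phi_\nu(z_*)^2=\prod_{i=1}^q(2\nu_i+1).
\]
By the AM--GM inequality, $\prod_i(2\nu_i+1)\le(1+2|\nu|_1/q)^q\le C(1+|\nu|_1)^q$. For the upper bound I will take $M$ minimal with $J\le J_M$, so that $M\asymp J^{1/q}$ by \eqref{eq:count}. Then
\[
\sup_z\sum_{j\le J}\phi_j(z)^2\le C\,J\,(1+M)^q\le CJ^2.
\]
For the lower bound at $z_*$, suppose $J\ge J_{M-1}$. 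There are $\asymp M^q$ multi-indices of degree at most $M-1$ with every coordinate at least $(M-1)/(2q)$, and each contributes $\gtrsim M^q$. This gives a sum $\gtrsim M^{2q}\asymp J^2$. A fixed tie-breaking rule inside the last degree level does not affect this, because the completed levels already give the bound. Small $J$ is handled by adjusting constants.

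\textbf{Chebyshev.} The orthonormal Chebyshev polynomials are $P_0=1$ and $P_k=\sqrt2\,T_k$ for $k\ge1$, with $|T_k|\le1$ and $T_k(1)=1$. Thus $\Phi_\nu^2\le2^{\#\{i:\nu_i\ge1\}}\le2^q$ everywhere, and $\Phi_\nu(z_*)^2\ge1$. Summing over any $J$ distinct terms gives \eqref{eq:cheb-sharp} directly, so $\rho=1$.

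\textbf{Sharpness.} Because $\sum_{j\le J}\phi_j^2$ is continuous (it is a polynomial), its essential supremum equals its supremum. The value at $z_*$ therefore gives a lower bound on the essential supremum: $cJ^2$ for Legendre and $J$ for Chebyshev. No exponent below $2$, respectively below $1$, can satisfy \eqref{eq:kernel}; for Chebyshev, $\rho\ge1$ holds anyway.

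\textbf{Main obstacle.} The only delicate step is the Legendre lower bound, which requires counting interior multi-indices uniformly in $J$, including partial degree levels. I would handle it through the $J_{M-1}\le J$ sandwich described above.
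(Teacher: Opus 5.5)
Your proposal is correct and follows the paper's proof essentially step for step. It uses the same ingredients: the one-dimensional facts $|L_k|\le L_k(1)=1$ and $|T_k|\le1$, a lower bound at $z_*$ from order $M^q$ interior multi-indices each contributing order $M^q$, sandwiching an incomplete degree level between complete ones, and termwise Chebyshev bounds. The only difference is cosmetic and concerns the Legendre upper bound. The paper enlarges the degree-$M$ index set to the rectangle $\{0\le\nu_r\le M\}$ and factorizes the sum as $(M+1)^{2q}$, whereas you bound each term by AM--GM and multiply by $J$; both routes give $CJ^2$.
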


\begin{corollary}[Rates for polynomial series]\label{cor:physical}
Consider the degree classes \eqref{eq:degree-norm} with fixed positive radius and $h\ge b_0$ for fixed $0<b_0<1$. Choose the prior radius so that the corresponding true coefficient class lies strictly inside the prior ellipsoid, using the norm comparison above. If $g_0\le\overline g$, the prior of Section~\ref{sec:adaptive} attains the adaptive integrated-Hellinger rate
\begin{equation}\label{eq:physical-rate}
 n^{-s/(2s+q)}=n^{-s/(2s+d_x+1)},
\end{equation}
and posterior-average and predictive squared Hellinger risks are bounded by $Cn^{-2s/(2s+q)}$. If also $g_0\ge\underline g>0$, the minimax squared risk is of order $n^{-2s/(2s+q)}$. The bounds are uniform over the corresponding covariate classes and compact smoothness intervals with $s_->q$ for Legendre series and $s_->q/2$ for Chebyshev series.
\end{corollary}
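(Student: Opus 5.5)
The plan is to reduce Corollary~\ref{cor:physical} to Theorem~\ref{thm:adaptive}, Corollary~\ref{cor:risk} and Theorem~\ref{thm:minimax}. We translate the degree classes into the coefficient ellipsoids \eqref{eq:trueclass} with $\alpha=s/q$, and supply the basis-growth exponent from Proposition~\ref{prop:classical}.

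\emph{Norm comparison.} From \eqref{eq:count}, the degree $d(j)$ of the $j$th term satisfies $c\,j^{1/q}\le 1+d(j)\le C\,j^{1/q}$, with $c,C$ depending only on $q$; the lower bound $j^{1/q}\le1+d(j)$ holds with $c=1$. Hence, for $s$ in a compact interval $[s_-,s_+]$,
\[
 C^{-s_+}\|h-1\|_{s,\mathrm{deg}}\le\|\theta\|_{2,s/q}\le\|h-1\|_{s,\mathrm{deg}}.
\]
So a degree class of radius $R$ with $h\ge b_0$ is contained in $\cF_{s/q}(R,b_0)$. We take $R_0=R$ and choose the prior radius $R_*>R$, which is the choice described in the statement.

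\emph{Range condition.} Proposition~\ref{prop:classical} gives Assumption~\ref{ass:kernel} with $\rho=2$ for Legendre and $\rho=1$ for Chebyshev. The requirement $\alpha_->\rho/2$ in \eqref{eq:range} becomes $s_-/q>1$, i.e.\ $s_->q$, for Legendre, and $s_->q/2$ for Chebyshev. Setting $\alpha_\pm=s_\pm/q$, Theorem~\ref{thm:adaptive} gives contraction at $\eps_n(\alpha_0)=n^{-(s/q)/(2s/q+1)}=n^{-s/(2s+q)}$, uniformly over $s\in[s_-,s_+]$. Corollary~\ref{cor:risk} then gives the posterior-average and predictive risk bounds $Cn^{-2s/(2s+q)}$. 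Uniformity over the covariate class bounded above by $\overline g$ is inherited directly from these results.

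\emph{Lower bound.} This step needs the most care, since the degree class is strictly smaller than $\cF_{s/q}(R_0,b_0)$: the comparison above only gives the reverse containment after shrinking the radius by $C^{-s_+}$. I would therefore apply Theorem~\ref{thm:minimax} to $\cF_{s/q}(C^{-s_+}R,b_0)$. This is possible because that theorem holds for any fixed positive radius and any $b_0$, and every element of this class lies in the degree class. This yields the lower bound $c\,n^{-2s/(2s+q)}$ uniformly over $G_0\in\Gclass(\underline g,\overline g)$ and $s\in[s_-,s_+]$.

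One point must be checked: the hypotheses \eqref{eq:range} with the shrunken radius, notably $\alpha_->\rho/2$, are the same as before, so the constant remains uniform. If the proof of Theorem~\ref{thm:minimax} only perturbs coefficients at a single degree level around $h=1$, the perturbations automatically lie in both classes, and this check is immediate.
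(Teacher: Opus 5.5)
Your proposal is correct and follows the paper's proof. You embed the degree class in the index ellipsoid $\Theta_{s/q}(R)$ via $j^{1/q}\le 1+d(j)$, then apply Theorem~\ref{thm:adaptive} and Corollary~\ref{cor:risk} with $\rho$ from Proposition~\ref{prop:classical}, and apply Theorem~\ref{thm:minimax} to a smaller-radius index ellipsoid contained in the degree class; both inclusions retain $h\ge b_0$. Your closing speculation is unnecessary: the lower-bound construction perturbs indices $m+1,\ldots,2m$, which generally span several degree levels, and the shrunken-radius argument already settles the lower bound uniformly.
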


The restrictions on $s$ in Corollary~\ref{cor:physical} ensure that the full coefficient ellipsoid consists of uniformly bounded representations. The next result shows that the corresponding thresholds cannot be lowered for this boundedness property.

\begin{proposition}[Thresholds for uniform boundedness]\label{prop:embedding-sharp}
For $\alpha\ge0$, the bound
\[
 \sup_{N\ge1}\ \sup_{\sum_{j\le N}j^{2\alpha}\theta_j^2\le1}
 \left\|\sum_{j\le N}\theta_j\phi_j\right\|_\infty<\infty
\]
holds if and only if $\alpha>1$ for the Legendre ordering, and if and only if $\alpha>1/2$ for Chebyshev. The same thresholds hold for the essential supremum. Above them, the infinite series converges uniformly and defines a bounded linear map from the weighted coefficient space to the space of bounded functions.
\end{proposition}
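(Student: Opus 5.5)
The plan is to turn the operator-norm question into a question about a weighted Christoffel-type function and then use the two-sided growth bounds of Proposition~\ref{prop:classical}. Fix $N$ and a point $z$. The map $\theta\mapsto\sum_{j\le N}\theta_j\phi_j(z)$ is a linear functional on $\R^N$ equipped with the norm $(\sum_{j\le N}j^{2\alpha}\theta_j^2)^{1/2}$. By Cauchy--Schwarz, with equality at $\theta_j\propto j^{-2\alpha}\phi_j(z)$, its supremum over the unit ball is exactly
\[
 K_{N,\alpha}(z)^{1/2},\qquad K_{N,\alpha}(z)=\sum_{j\le N}j^{-2\alpha}\phi_j(z)^2 .
\]
Hence the displayed supremum equals $\sup_N\sup_z K_{N,\alpha}(z)^{1/2}=\sup_z K_{\infty,\alpha}(z)^{1/2}$, where $K_{\infty,\alpha}$ is the full series, since $K_{N,\alpha}$ is increasing in $N$. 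The basis functions are polynomials and hence continuous, and the product measures have full support on $[-1,1]^q$. So for each fixed $N$ the essential supremum of $|\sum_{j\le N}\theta_j\phi_j|$ equals its pointwise supremum, and the essential-supremum version reduces to the same quantity.

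For sufficiency, write $S_m(z)=\sum_{j\le m}\phi_j(z)^2$ and apply Abel summation:
\[
 K_{N,\alpha}(z)=N^{-2\alpha}S_N(z)+\sum_{m<N}S_m(z)\bigl(m^{-2\alpha}-(m+1)^{-2\alpha}\bigr).
\]
By Proposition~\ref{prop:classical}, $S_m\le Cm^\rho$, with $\rho=2$ for Legendre and $\rho=1$ for Chebyshev. Since $m^{-2\alpha}-(m+1)^{-2\alpha}\le 2\alpha m^{-2\alpha-1}$, this gives
\[
 K_{N,\alpha}\le C N^{\rho-2\alpha}+C\sum_m m^{\rho-2\alpha-1}.
\]
The right side is bounded uniformly in $N$ and $z$ when $2\alpha>\rho$, that is, when $\alpha>1$ for Legendre and $\alpha>1/2$ for Chebyshev. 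The same computation restricted to indices $j>N$ shows that $\sup_z\sum_{j>N}j^{-2\alpha}\phi_j(z)^2\to0$. Cauchy--Schwarz then bounds the uniform norm of the tail of $\sum_j\theta_j\phi_j$ by this quantity times $\|\theta\|_{2,\alpha}$. The partial sums are therefore uniformly Cauchy, and the limit defines a bounded linear map into bounded functions with norm at most $\sup_z K_{\infty,\alpha}(z)^{1/2}$.

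For necessity, evaluate at $z_*=(1,\ldots,1)$.

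\emph{Chebyshev.} Here $\phi_j(z_*)^2\ge1$ for every $j$, because each normalized factor equals $1$ or $\sqrt2$ at $1$. Hence $K_{N,\alpha}(z_*)\ge\sum_{j\le N}j^{-2\alpha}$, which diverges for $\alpha\le1/2$.

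\emph{Legendre, $\alpha<1$.} Since $j^{-2\alpha}\ge N^{-2\alpha}$ for $j\le N$, the lower bound in \eqref{eq:legendre-sharp} gives $K_{N,\alpha}(z_*)\ge N^{-2\alpha}S_N(z_*)\ge cN^{2-2\alpha}\to\infty$.

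\emph{Legendre, $\alpha=1$.} This borderline case is the main obstacle, because the crude bound above only gives a bounded quantity. Using dyadic blocks $(N,2N]$ fails, since $S_{2N}-S_N$ need not be of order $N^2$ unless $4c>C$. Instead, take blocks $(N,KN]$ with a large fixed integer $K$. By \eqref{eq:legendre-sharp},
\[
 S_{KN}(z_*)-S_N(z_*)\ge cK^2N^2-CN^2\ge \tfrac c2K^2N^2
\]
as soon as $K^2\ge 2C/c$. Each such block therefore contributes at least $(KN)^{-2}\cdot\tfrac c2K^2N^2=c/2$ to $K_{\infty,1}(z_*)$. Summing over the disjoint blocks $(K^i,K^{i+1}]$, $i\ge0$, gives divergence. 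The operator norm is monotone decreasing in $\alpha$, so divergence at $\alpha=1$ also covers all $\alpha\le1$.

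Finally, the essential-supremum claim at the thresholds follows from continuity of each fixed $K_{N,\alpha}$. Points near $z_*$, which carry positive measure, give values of $K_{N,\alpha}$ arbitrarily close to $K_{N,\alpha}(z_*)$. The maximizing $\theta$ for $z_*$ therefore yields essential suprema that are unbounded in $N$.
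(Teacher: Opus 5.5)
Your proof is correct and follows essentially the paper's route. The extremal vector $\theta_j\propto j^{-2\alpha}\phi_j(z_*)$ reduces the question to the weighted kernel at $z_*$, the two-sided growth bounds of Proposition~\ref{prop:classical} decide divergence or boundedness, and continuity plus full support settle the essential-supremum version. The only real difference is the Legendre case $\alpha=1$: there the paper simply reuses the summation-by-parts identity you already wrote for sufficiency, whose terms are nonnegative, so $S_m(z_*)\ge cm^2$ gives summands of order $1/m$ and logarithmic divergence; your $K$-adic block argument is valid but unnecessary.
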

Appendix~\ref{app:polynomial-details} proves the basis bounds and the boundedness thresholds. Failure of a uniform bound over the entire ellipsoid does not mean that each of its members is unbounded. Section~\ref{sec:floor} imposes boundedness directly on the truth class and obtains the integrated-Hellinger minimax rate at every positive degree smoothness.

Legendre and Chebyshev expansions use different reference measures and define different smoothness classes. Their different boundedness thresholds therefore do not by themselves rank the two procedures on a common class of Lebesgue densities.

\subsection{A differentiability condition for Legendre series}
The preceding results impose a restriction on polynomial coefficients. For Legendre series, a sufficient condition can also be stated in terms of derivatives of the true density: positivity and $2m$ continuous derivatives imply finite degree norm of order $2m$ for its normalized square root. To establish this connection, we use a differential operator for which Legendre polynomials are eigenfunctions. Define
\[
 \mathcal L v=-\sum_{r=1}^q\partial_{z_r}\{(1-z_r^2)\partial_{z_r}v\}.
\]
The tensor Legendre polynomials satisfy $\mathcal L\Phi_\nu=\lambda_\nu\Phi_\nu$, where $\lambda_\nu=\sum_r\nu_r(\nu_r+1)$. For $v\in L_2(\mu)$, define $(I+\mathcal L)^{s/2}v$ as the $L_2(\mu)$ sum of the series with coefficients $(1+\lambda_\nu)^{s/2}\langle v,\Phi_\nu\rangle$, whenever that series converges. Denote its domain by $D((I+\mathcal L)^{s/2})$. For general $v\in L_2(\mu)$, define
\[
 \|v\|_{s,\mathrm{deg}}^2=\sum_{\nu\in\N_0^q}(1+|\nu|_1)^{2s}|\langle v,\Phi_\nu\rangle|^2.
\]
For $v=h-1$, the terms with $\nu_1=0$ vanish.

\begin{proposition}[Degree smoothness and derivatives]\label{prop:sobolev}
For $s\ge0$, finite degree norm is equivalent to membership in $D((I+\mathcal L)^{s/2})$, and
\begin{equation}\label{eq:spectral-norm}
 \|(I+\mathcal L)^{s/2}v\|_2^2
 \le\|v\|_{s,\mathrm{deg}}^2
 \le(q+1)^s\|(I+\mathcal L)^{s/2}v\|_2^2.
\end{equation}
For polynomials,
\begin{equation}\label{eq:weighted-energy}
 \|(I+\mathcal L)^{1/2}v\|_2^2
 =\|v\|_2^2+\sum_{r=1}^q\int(1-z_r^2)|\partial_{z_r}v|^2\,d\mu.
\end{equation}
If a conditional density relative to uniform $\mu_y$ is positive on the cube and belongs to $C^{2m}([-1,1]^q)$, then its normalized square root has finite degree norm of order $2m$, for any integer $m\ge1$.
\end{proposition}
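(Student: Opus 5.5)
Everything reduces to comparing eigenvalues. The spectral equivalence comes first; it needs only a comparison of $1+\lambda_\nu$ with $(1+|\nu|_1)^2$. Since $\lambda_\nu=\sum_r\nu_r^2+|\nu|_1$, we have
\[
1+\lambda_\nu\le 1+|\nu|_1^2+|\nu|_1\le(1+|\nu|_1)^2 ,
\]
which gives the left inequality in \eqref{eq:spectral-norm} after raising to the power $s$. For the reverse direction, Cauchy--Schwarz gives $|\nu|_1^2\le q\sum_r\nu_r^2$. Hence
\[
(1+|\nu|_1)^2\le 2+2|\nu|_1^2\le(q+1)(1+\lambda_\nu),
\]
which needs care for small $q$ but can be checked directly: for $|\nu|_1\ge1$, use $1+2|\nu|_1+|\nu|_1^2\le(q+1)(1+|\nu|_1)+q\sum\nu_r^2$ termwise, which holds because $2|\nu|_1\le(q+1)|\nu|_1$ and $|\nu|_1^2\le q\sum\nu_r^2$. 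Raising to the power $s$ gives the constant $(q+1)^s$. Finiteness of either series is then equivalent to finiteness of the other, which gives the domain statement.

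For \eqref{eq:weighted-energy}, I would integrate by parts in each coordinate for a polynomial $v$. Under the uniform product probability measure, the boundary terms vanish because $1-z_r^2=0$ at $z_r=\pm1$. This gives $\langle\mathcal L v,v\rangle=\sum_r\int(1-z_r^2)|\partial_rv|^2\,d\mu$. Expanding $v$ in the eigenbasis, $\|(I+\mathcal L)^{1/2}v\|_2^2=\sum(1+\lambda_\nu)v_\nu^2=\|v\|_2^2+\langle\mathcal L v,v\rangle$.

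For the derivative statement, write $f=p$. Because $f$ is positive and continuous on the compact cube, it is bounded below. Hence $\sqrt f\in C^{2m}$, and $c_p(x)=\int\sqrt{f(y,x)}\,dy/2$ is $C^{2m}$ in $x$, by differentiation under the integral, and positive. So $h_p=\sqrt f/c_p\in C^{2m}([-1,1]^q)$, and by Proposition~\ref{prop:geometry} it has outcome integral one. Set $v=h_p-1$. The main step is to show $v\in D((I+\mathcal L)^{m})$, after which \eqref{eq:spectral-norm} with $s=2m$ finishes the proof. For $w\in C^2$, $\mathcal L w$ is continuous, and the symmetric integration by parts $\langle\mathcal L w,\Phi_\nu\rangle=\langle w,\mathcal L\Phi_\nu\rangle=\lambda_\nu\langle w,\Phi_\nu\rangle$ holds with no boundary terms, again because the weight vanishes at $\pm1$. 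The operator $\mathcal L$ maps $C^{k}$ into $C^{k-2}$, since its coefficients are polynomials. Iterating $m$ times gives $(1+\lambda_\nu)^m\langle v,\Phi_\nu\rangle=\langle(I+\mathcal L)^mv,\Phi_\nu\rangle$, and $(I+\mathcal L)^mv$ is a continuous, hence $L_2(\mu)$, function. By Bessel/Parseval, $\sum(1+\lambda_\nu)^{2m}v_\nu^2<\infty$, so $v\in D((I+\mathcal L)^{m})=D((I+\mathcal L)^{2m/2})$.

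The main obstacle I expect is justifying the integration by parts rigorously for $C^2$ functions on the closed cube. One option is to do it coordinatewise via Fubini, using that $(1-z_r^2)\partial_r w\,\Phi_\nu$ vanishes at the endpoints. Alternatively, one can approximate by polynomials in $C^2$ (Weierstrass for derivatives) and pass to the limit. The smoothness of $c_p$ also needs justification, via dominated differentiation using compactness.
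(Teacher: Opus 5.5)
Your proposal is correct and follows the paper's proof essentially step for step: the eigenvalue comparison $1+\lambda_\nu\le(1+|\nu|_1)^2\le(q+1)(1+\lambda_\nu)$ raised to the power $s$, boundary-free integration by parts for polynomials, and then $C^{2m}$ regularity of $h_p=\sqrt p/c_p$ followed by $m$-fold integration by parts against tensor Legendre polynomials and Parseval. One small slip: your intermediate display $2+2|\nu|_1^2\le(q+1)(1+\lambda_\nu)$ is false for $q\ge2$ (take $\nu=(k,\dots,k)$ with $k$ large), so it is large rather than small $q$ that needs care; the termwise comparison you give right after is correct and is all the argument needs.
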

Here $C^{2m}$ means that the function extends to a $C^{2m}$ function on a neighborhood of the cube. Positivity and compactness give a positive lower bound, which permits differentiation of the normalized square root. Appendix~\ref{app:polynomial-details} proves the result and extends \eqref{eq:weighted-energy} to the completion of polynomials under its displayed norm. The identity weights each squared derivative by $1-z_r^2$. It therefore describes a weighted derivative norm; no equivalence with ordinary unweighted Sobolev or H\"older balls at every fractional order is claimed.

\subsection{Known relative smoothness across coordinates}
Fix $a_1,\ldots,a_q>0$, put $D_a=\sum_r a_r^{-1}$, and order terms by
\[
 \Lambda_\nu=\max_{1\le r\le q}(1+\nu_r)^{a_r}.
\]
The known weights $a_r$ specify relative smoothness across coordinates. For example, $a_2=2a_1$ means that the smoothness indices $s_r=ua_r$ assign twice as much smoothness to the second coordinate as to the first. Only the common scale $u$ is unknown.

The number of retained terms with $\Lambda_\nu\le T$ is of order $T^{D_a}$. Thus $D_a$ plays the counting role that $q$ played for total-degree ordering, and the corresponding coefficient index is $\alpha=u/D_a$.

\begin{corollary}[Adaptation to a common smoothness scale]\label{cor:anisotropic}
Suppose $\sum_{\nu:\nu_1\ge1}\Lambda_\nu^{2u}\theta_\nu^2\le R^2$ and $h\ge b_0$, with fixed $R>0$ and $0<b_0<1$. If $g_0\le\overline g$, choose the prior radius using the comparison with the index ellipsoid so that the true class lies strictly inside it. Theorem~\ref{thm:adaptive} then applies with $\alpha=u/D_a$. Writing $s_r=ua_r$, the adaptive rate is
\begin{equation}\label{eq:anisotropic-rate}
 n^{-u/(2u+D_a)}=n^{-1/(2+\sum_{r=1}^q1/s_r)}.
\end{equation}
Posterior-average and predictive squared Hellinger risks are bounded by a constant times the square of this rate. If also $g_0\ge\underline g>0$, the minimax squared risk has this order. For fixed $(a_1,\ldots,a_q)$, the conclusions hold uniformly over the corresponding covariate classes and compact intervals with $u_->D_a$ for Legendre series and $u_->D_a/2$ for Chebyshev series.
\end{corollary}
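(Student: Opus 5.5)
The argument reduces Corollary~\ref{cor:anisotropic} to Theorems~\ref{thm:adaptive} and \ref{thm:minimax}, Corollary~\ref{cor:risk}, and Proposition~\ref{prop:classical}. The work is in comparing the anisotropic weight $\Lambda_\nu$ with the coefficient position $j$. Order the retained tensor terms ($\nu_1\ge1$) by nondecreasing $\Lambda_\nu$, breaking ties by a fixed rule, and let $j(\nu)$ denote the position.

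\textbf{Counting and the norm comparison.} First count the terms with $\Lambda_\nu\le T$. The condition is equivalent to $\nu_r\le T^{1/a_r}-1$ for every $r$, with $\nu_1\ge1$. The count is therefore a product $\prod_r\lfloor T^{1/a_r}\rfloor$, up to an adjustment in the first factor. It is of order $T^{D_a}$, with two-sided constants depending only on $a$, for $T$ above a fixed threshold; small $T$ is handled by adjusting constants. Since ties only group terms sharing a value of $\Lambda_\nu$, this gives $c\Lambda_\nu^{D_a}\le j(\nu)\le C\Lambda_\nu^{D_a}$. Hence $j^{2\alpha}\asymp\Lambda_\nu^{2u}$ with $\alpha=u/D_a$, and the constants are uniform over compact $u$-intervals because they enter only through powers $c^{2u/D_a}$. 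Consequently the anisotropic ellipsoid of radius $R$ lies inside $\Theta_\alpha(R_0)$ with $R_0=C'R$. Choose the prior radius $R_*>R_0$, as the statement instructs. The truth class then sits in $\cF_\alpha(R_0,b_0)$.

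\textbf{Basis growth and the upper bounds.} Assumption~\ref{ass:kernel} must be verified in the anisotropic ordering.
\begin{itemize}
\item For Chebyshev, \eqref{eq:cheb-sharp} holds for any ordering, so $\rho=1$.
\item For Legendre, use $\sup|\psi_k|^2=2k+1$ on each coordinate. Then $\sup_z\sum_{j\le m}\phi_j^2\le\prod_r\sum_{k\le K_r}(2k+1)\lesssim\prod_r K_r^2$, where $K_r\asymp T^{1/a_r}$ and $m\asymp T^{D_a}$. This gives $\lesssim T^{2D_a}\asymp m^2$, so $\rho=2$.
\end{itemize}
The requirement $\alpha_->\rho/2$ becomes $u_->D_a$ for Legendre and $u_->D_a/2$ for Chebyshev. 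Theorem~\ref{thm:adaptive} then yields contraction at $n^{-\alpha/(2\alpha+1)}=n^{-u/(2u+D_a)}$. Writing $D_a=\sum_r a_r^{-1}=u\sum_r s_r^{-1}$ and dividing the exponent by $u$ gives \eqref{eq:anisotropic-rate}. Corollary~\ref{cor:risk} gives the posterior-average and predictive risk bounds.

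\textbf{Lower bound.} Theorem~\ref{thm:minimax} bounds the minimax risk over $\cF_\alpha(R_1,b_0)$ for any fixed radius $R_1>0$. Take $R_1=R/C'$, so that this index-ellipsoid class lies inside the anisotropic class. The lower bound of order $n^{-2\alpha/(2\alpha+1)}$ then transfers to the anisotropic class, with the reverse comparison constant from the counting step.

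\textbf{Main obstacle.} Because Theorem~\ref{thm:minimax} holds at arbitrary positive radius, the transfer itself is routine. The care lies in keeping the comparison constants uniform in $u$, and in checking that Theorem~\ref{thm:minimax}'s constant is uniform in the radius used. The Legendre growth bound also needs an explicit argument, since Proposition~\ref{prop:classical} states it only for total-degree ordering. This is the step I would write out in full.
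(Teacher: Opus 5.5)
Your proposal is correct and follows essentially the same route as the paper. Both arguments count the rectangle $\{\Lambda_\nu\le T\}$ to get $j\asymp\Lambda_\nu^{D_a}$ and hence the norm equivalence with $\alpha=u/D_a$. Both obtain $\rho=2$ for Legendre from the rectangle bound $\prod_rT^{2/a_r}=T^{2D_a}$ together with $m\gtrsim T^{D_a}$ when $T$ is the $m$th weight, and $\rho=1$ for Chebyshev. Both then apply Theorem~\ref{thm:adaptive} and Corollary~\ref{cor:risk} to the containing index ellipsoid and Theorem~\ref{thm:minimax} to the contained one.

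The only slip is notational. The radius of the contained ellipsoid should use the reverse constant from $\Lambda_\nu^{2u}\lesssim j^{2\alpha}$, not the $C'$ of the upward inclusion. If you fix that radius once over the whole compact $u$-interval, you do not need to check uniformity of Theorem~\ref{thm:minimax} in the radius.
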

Appendix~\ref{app:polynomial-details} proves the norm and basis-growth comparisons for this ordering. The result adapts to the scalar $u$ with $(a_1,\ldots,a_q)$ fixed. Adaptation to an unrestricted smoothness vector would also require selection of the ordering.

\section{Conditional densities that may vanish}\label{sec:floor}
This section relaxes two restrictions of the main adaptive result. The true representation may vanish, and its smoothness index may lie below $\rho/2$. We instead impose an upper bound on the representation directly. A modified prior gives adaptation on this larger class up to a logarithmic factor, while preserving the exact rate on the positive, sufficiently smooth subclass.

\subsection{The truth class and regularization}
Fix
\[
 0<\alpha_-<\alpha_+<\infty,\qquad 0<R_0<R_*<\infty,\qquad B_0>1,
\]
and define
\[
 \cF_\alpha^0(R_0,B_0)=
 \{p_\theta:\theta\in\Theta_\alpha(R_0),\ 0\le h_\theta\le B_0\}.
\]
The series converges in $L_2(\mu)$, and
\[
 \int h_\theta(y,x)\,\mu_y(dy)=1
 \qquad\text{for }\mu_x\text{-almost every }x.
\]
This normalization follows from $L_2$ convergence and does not require uniform convergence. The restriction $h_\theta\le B_0$ is imposed on the truth class; it is not deduced from a lower bound on smoothness. Coefficient approximation, covering, and prior concentration remain available on every compact smoothness interval in $(0,\infty)$ without Assumption~\ref{ass:kernel}.

The difficulty is likelihood control, rather than Hellinger approximation. The local likelihood argument used previously keeps both representations away from zero. To replace that argument, we add a small reference-density component to each candidate density. For $\delta_n=n^{-3}$, introduce two specifications:
\begin{equation}\label{eq:two-links}
 p_{0,\theta,n}=p_\theta,\qquad
 p_{1,\theta,n}=(1-\delta_n)p_\theta+\delta_n.
\end{equation}
Both specifications integrate to one with respect to $\mu_y$. The regularized density $p_{1,\theta,n}$ is at least $\delta_n$ because the constant function one is the density of the reference probability measure. If $\mu_y(dy)=w_y(y)dy$, its added component in the Lebesgue density is $\delta_n w_y(y)$.

The regularized specification changes a candidate density by at most $2\delta_n$ in squared Hellinger distance. Moreover, $p_0\le B_0^2$ and $p_{1,\theta,n}\ge\delta_n$ imply
\[
 \log\{p_0/p_{1,\theta,n}\}
 \le\log(B_0^2/\delta_n)
 \qquad P_0\text{-almost surely}.
\]
This one-sided bound permits the likelihood argument to accommodate zeros in the true density. The approximation bias and likelihood bounds are proved in Appendix~\ref{app:floor}.

\subsection{The prior and posterior rates}
The prior assigns probability to both specifications. The regularized one supplies approximations when the truth may vanish; retaining the original one permits the sharper bound on the positive subclass. Use the grid \eqref{eq:grid} on this wider smoothness interval and set
\begin{align}
 t_{0,n}(\alpha)&=n^{1/(2\alpha+1)},&
 t_{1,n}(\alpha)&=\{n(\log n)^2\}^{1/(2\alpha+1)},\label{eq:two-complexities}\\
 w_n(\ell,\alpha)&=\frac{e^{-At_{\ell,n}(\alpha)}}
 {\sum_{r=0}^1\sum_{\beta\in\cA_n}e^{-At_{r,n}(\beta)}},
 &\ell&\in\{0,1\}.\label{eq:joint-link-weights}
\end{align}
The larger benchmark $t_{1,n}(\alpha)$ accounts for the logarithmic factor in the likelihood bound. For a regularized approximation using $J$ terms, that bound has order $(\log n)^2\{J^{-2\alpha}+\delta_n\}$. Balancing its leading term with $J/n$ gives
\[
 (\log n)^2J^{-2\alpha}\asymp J/n,
 \qquad J\asymp t_{1,n}(\alpha).
\]
The term involving $\delta_n=n^{-3}$ is negligible at these rates. As before, these are benchmark dimensions, not fixed truncation levels under the prior.

Conditional on $(\ell,\alpha)$, draw $J$ and $\theta$ from \eqref{eq:Jprior}--\eqref{eq:coeffprior}. Denote the joint prior on $(\ell,\alpha,J,\theta)$ and its induced density prior by $\Pi_n^{\mathrm{mix}}$. Its posterior predictive density is the posterior mean of $p_{\ell,\theta,n}$.

\begin{theorem}[Adaptation when the truth may vanish]\label{thm:floor}
Let the orthonormal system satisfy the measurability and zero outcome-integral conditions of Section~\ref{sec:model}, and suppose $g_0\le\overline g$. For every fixed $A>0$, there are uniform constants $M,c,C>0$ such that, for all sufficiently large $n$,
\begin{equation}\label{eq:floor-bound}
 \E_0\Pi_n^{\mathrm{mix}}\{d_{G_0}(p_{\ell,\theta,n},p_0)>M\eps_n^+(\alpha_0)\mid\cD_n\}
 \le Ce^{-ct_{1,n}(\alpha_0)/\log n},
\end{equation}
where
\begin{equation}\label{eq:floor-rate}
 \eps_n^+(\alpha)=\sqrt{t_{1,n}(\alpha)/n}
 =n^{-\alpha/(2\alpha+1)}(\log n)^{1/(2\alpha+1)}.
\end{equation}
The bound is uniform over $\alpha_0\in[\alpha_-,\alpha_+]$, $p_0\in\cF_{\alpha_0}^0(R_0,B_0)$, and covariate densities bounded above by $\overline g$. Expected posterior-average squared Hellinger loss and predictive squared Hellinger risk are $O(\eps_n^+(\alpha_0)^2)$. The constants and the threshold for $n$ may depend on the fixed smoothness endpoints, $R_0,R_*,B_0,\overline g$, and the prior parameters. This assertion requires neither Assumption~\ref{ass:kernel} nor completeness of the system. The same bounds hold if the true class is enlarged to allow signed representations with $\theta_0\in\Theta_{\alpha_0}(R_0)$ and $|h_{\theta_0}|\le B_0$.

Suppose in addition that Assumption~\ref{ass:kernel} holds. Fix $\alpha_p\in[\alpha_-,\alpha_+]$ with $\alpha_p>\rho/2$ and $b_0>0$. On the subclass $\alpha_0\in[\alpha_p,\alpha_+]$ and $h_0\ge b_0$, the same prior satisfies
\begin{equation}\label{eq:mixed-exact-bound}
 \E_0\Pi_n^{\mathrm{mix}}\{d_{G_0}(p_{\ell,\theta,n},p_0)>M\eps_n(\alpha_0)\mid\cD_n\}
 \le Ce^{-ct_{0,n}(\alpha_0)},
\end{equation}
and both squared-risk bounds are $O(\eps_n(\alpha_0)^2)$. The constants and the threshold for $n$ in this assertion may also depend on $b_0$, $\alpha_p$, and the basis-growth constants $(C_K,\rho)$. The prior does not use $b_0$ or $\alpha_p$.
\end{theorem}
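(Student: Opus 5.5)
The plan is to apply the model-averaging contraction theorem of \citet{GLV2008} (in the exponential-probability form of \citet{GGV2000} used for Theorem~\ref{thm:adaptive}) to the joint prior $\Pi_n^{\mathrm{mix}}$ on $(\ell,\alpha,J,\theta)$. The rate is $\eps_n^+(\alpha_0)$ and the complexity is $t_{1,n}(\alpha_0)=n\eps_n^+(\alpha_0)^2$. Three ingredients are needed.

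\emph{Prior mass.} Let $\alpha_n$ be the largest grid point not exceeding $\alpha_0$, and take $J\asymp t_{1,n}(\alpha_0)$ with $\ell=1$. Write $p_1=p_{1,\theta,n}$ and $\eps=\eps_n^+$. The goal is the lower bound
\[
 \Pi_n^{\mathrm{mix}}\bigl\{K_{G_0}(p_0,p_1)\le\eps^2,\ V_{G_0}(p_0,p_1)\le\eps^2\bigr\}\ge e^{-Ct_{1,n}(\alpha_0)}.
\]
For the likelihood step I would use the one-sided bound $\log(p_0/p_1)\le\log(B_0^2/\delta_n)\lesssim\log n$ together with the standard lemma bounding $K$ and $V$ by $h^2\{1+\log\|p_0/p\|_\infty\}^2$ (Ghosal--Ghosh--van der Vaart, Lemma 8.2/8.3 type, or Wong--Shen). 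This gives $K,V\lesssim(\log n)^2 d_{G_0}^2(p_0,p_1)$. By Proposition~\ref{prop:geometry} and the $2\delta_n$ regularization bias, $d_{G_0}^2\lesssim\|\theta-\theta_0\|_2^2+\delta_n$. It then suffices that $\|\theta-\theta_0^{(J)}\|_2\lesssim\eps_n(\alpha_0)$, because $(\log n)^2\eps_n(\alpha_0)^2\cdot$const matches $\eps^2$ up to constants once $J$ is chosen as in the balancing display. The conditioned Gaussian ball probability is at least $e^{-CJ\log n}$, and $J\log n$ is too large. To avoid this, I would instead use the small-ball estimate for the smoothness-matched Gaussian with $J$ terms, exactly as in Theorem~\ref{thm:adaptive}, which gives $e^{-CJ}$. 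The radius margin $R_0<R_*$ makes the conditioning cost only a constant factor. Multiplying by $w_n(1,\alpha_n)\pi(J)\ge e^{-C t_{1,n}(\alpha_0)}$ completes this step.

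\emph{Sieves and tests.} Take sieves with $J\le Lt_{1,n}(\alpha_0)$ and $\alpha$ with $t_{\ell,n}(\alpha)\le Lt_{1,n}(\alpha_0)$, for both $\ell$. Their complement has prior mass at most $e^{-(L\lambda\wedge LA)t_{1,n}(\alpha_0)}$, which is smaller than the mass bound for large $L$. Covering uses the Hellinger Lipschitz bound of Proposition~\ref{prop:geometry}, which needs no positivity, plus the $2\delta_n$ shift for $\ell=1$. This yields $\log N\lesssim J\log n$. That is why the exponent in \eqref{eq:floor-bound} is $t_{1,n}/\log n$: I would run the \citet{GGV2000} argument with entropy $t_{1,n}$, and therefore rate squared $t_{1,n}/n$, but losing a $\log n$ in the exponential tail. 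Alternatively, a local entropy bound on ellipsoids (entropy of order $J$ without the log, since radius/$\eps$ is polynomial) could recover the rate at the same scale. Hellinger tests for conditional densities with covariate averaging then give the first assertion. The risk bounds follow as in Corollary~\ref{cor:risk}, by convexity and $d^2\le2$. The signed extension is immediate, since only $p_0=h_0^2/Q_0\le B_0^2$ and the $L_2$ approximation were used.

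\emph{Exact rate on the positive subclass.} Here I would reuse the prior-mass argument of Theorem~\ref{thm:adaptive} on the $\ell=0$ component. Since $h_0\ge b_0$ and $\alpha_0>\rho/2$, truncated approximants stay above $b_0/2$, and $K,V\lesssim d^2$ without logs. The weight is $w_n(0,\alpha_n)\ge e^{-Ct_{0,n}(\alpha_0)}$. The main obstacle is the complexity control: the $\ell=1$ components with $t_{1,n}(\alpha)\le Lt_{0,n}(\alpha_0)$ must have entropy at most $t_{0,n}(\alpha_0)$, with no extra log. Components of larger complexity are removed by their prior weight $e^{-At_{1,n}(\alpha)}$. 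This works because those components need only be controlled in entropy, not in likelihood. I would try a local entropy bound $\log N(\eps, \Theta_{\alpha,J}\cap\{\|\theta-\theta_0\|\le C\eps_n\})\lesssim J$ to remove the $\log n$. The remaining steps follow Theorem~\ref{thm:adaptive}, with $\delta_n$ negligible.
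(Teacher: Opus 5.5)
Your architecture matches the paper's: an $\ell=1$ oracle component with $\alpha=\beta$ and $J_*=\lceil t_{1,n}(\beta)\rceil$, the small-ball bound $e^{-CJ_*}$ of Lemma~\ref{lem:smallball}, $K,V\lesssim(\log n)^2H^2$ via the one-sided ratio bound, weight-based removal of high-complexity components, and the $\ell=0$ component for the positive subclass. The gap is in the entropy step, which is where the rate is decided.

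Your covering by dimension gives $\log N\lesssim J\log n\asymp t_{1,n}\log n$. You then say you would run the argument ``with entropy $t_{1,n}$'' and lose the $\log n$ only in the tail, but that trade is not available. The type-I error bound is $e^{\log N-c_TM^2n\eps^2}$. With $\log N\asymp t\log n$ and $n\eps^2=t$, this bound is useless for every fixed $M$. Your route therefore gives at best $\sqrt{t_{1,n}\log n/n}$, an extra $\sqrt{\log n}$ in the rate.

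The parenthetical justifying your alternative is also backwards. A ratio radius$/\eps$ that is polynomial in $n$ is exactly what produces a $\log n$ per dimension. The paper instead uses a $J$-free global bound. Each $\Pi_{\alpha,J}$ is conditioned on $\Theta_{\alpha,J}(R_*)$. After components with $t_{\ell,n}(\alpha)>Bt$ are discarded through the weight ratios, every retained draw, for either $\ell$ and any $J$, lies in $\Theta_\gamma(R_*)$, where $\gamma$ is the smallest retained index. Lemma~\ref{lem:entropy} then gives $\log N(M\eps/64)\lesssim(R_*/\eps)^{1/\gamma}\le CB^{1+1/(2\alpha_-)}t$, using $t_{0,n}(\gamma)\le Bt$. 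The logarithm disappears because the semiaxes $R_*j^{-\gamma}$ decay in the volume comparison. The $\ell=1$ densities need no extra net, because $p\mapsto(1-\delta_n)p+\delta_n$ is a Hellinger contraction, \eqref{eq:mix-contract}.

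This same bound removes what you call the main obstacle on the positive subclass. The coefficient-local entropy you propose there is unnecessary, and it covers the wrong set. Prior draws are signed, so Hellinger closeness to $p_0$ does not give coefficient closeness to $\theta_0$ without the nonnegativity required by \eqref{eq:inverse-H}.

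The $1/\log n$ in the exponent of \eqref{eq:floor-bound} comes from the posterior denominator, not from entropy. The prior-mass target $K\le\eps^2$, $V\le\eps^2$ alone yields only a Chebyshev-type (polynomial) bound on the probability that $D_n$ is small. For an exponential bound you must put the one-sided bound $\log(p_0/p_{1,\theta,n})\le\Lambda_n=\log(B_0^2/\delta_n)\asymp\log n$ into Bernstein's inequality, as in Lemma~\ref{lem:denominator}. This gives failure probability $\exp\{-t/(2(C_V+\Lambda_n/3))\}=e^{-ct/\log n}$, while the test and weight terms are $e^{-ct}$. On the positive subclass, Lemma~\ref{lem:localKL} bounds the log ratio by a constant, which restores $e^{-ct_{0,n}}$.

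There is also a smaller slip. It is not enough that $\|\theta-\theta_0^{(J)}\|_2\lesssim\eps_n(\alpha_0)$, since $(\log n)^2\eps_n(\alpha_0)^2/\eps_n^+(\alpha_0)^2=(\log n)^{4\alpha_0/(2\alpha_0+1)}\to\infty$. You need coefficient error of order $\eps_n^+/\log n\asymp t_{1,n}^{-\alpha_0}$. The ball of radius $rJ_*^{-\beta}$ in Lemma~\ref{lem:smallball}, together with the truncation error $R_0(J_*+1)^{-\beta}$, does deliver this, so your construction survives but the stated justification does not.
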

The first assertion permits zeros and every positive coefficient smoothness, but its rate contains a logarithmic factor. The second assertion uses the same prior and removes that factor on the positive, sufficiently smooth subclass. Neither $b_0$ nor the lower smoothness threshold $\alpha_p$ for that subclass enters the prior.

Nonnegativity makes $h_0$ the unique normalized square root. The first assertion itself needs only a bounded representation with outcome integral one, which explains the extension to signed representations in the theorem. Appendix~\ref{app:floor} proves both assertions.

The first assertion also holds using only the regularized specification, with smoothness probabilities
\[
 w_n^{\mathrm{floor}}(\alpha)=
 \frac{e^{-At_{1,n}(\alpha)}}{\sum_{\beta\in\cA_n}e^{-At_{1,n}(\beta)}}.
\]
The conclusion without a logarithmic factor uses the joint prior, which assigns positive probability to the original specification. The theorem concerns density estimation; it does not establish consistent selection of $\ell$ or a lower bound requiring the logarithmic factor.

\subsection{The minimax benchmark at low smoothness}
The preceding Bayesian upper bound does not show that its logarithmic factor is unavoidable. For Legendre and Chebyshev degree classes, the next theorem establishes the minimax benchmark at every $s>0$. Its upper bound uses a separate estimator constructed from tests; that estimator may depend on $s$. Thus this result does not give an exact adaptive Bayesian rate on the full bounded class.

\begin{theorem}[Minimax rates at every positive degree smoothness]\label{thm:all-s-minimax}
For the total-degree Legendre or Chebyshev system, let
\[
 \cF_{s,\mathrm{deg}}^0(R,B)=
 \left\{p_h:\|h-1\|_{s,\mathrm{deg}}\le R,\quad
 0\le h\le B,\quad\int h(y,x)\,\mu_y(dy)=1\right\},
 \qquad R>0,\ B>1.
\]
For fixed $q,R,B,\underline g,\overline g$, uniformly over $s$ in any compact interval $[s_-,s_+]\subset(0,\infty)$ and $G_0\in\Gclass(\underline g,\overline g)$,
\begin{equation}\label{eq:all-s-minimax}
 \inf_{\widehat p_n}\sup_{p_0\in\cF_{s,\mathrm{deg}}^0(R,B)}
 \E_{p_0,G_0}^n d_{G_0}^2(\widehat p_n,p_0)
 \asymp n^{-2s/(2s+q)}.
\end{equation}
There is an estimator attaining the upper bound that does not use $G_0$, $\underline g$, or $\overline g$, although it may depend on $s$ and the fixed radius $R$. The risk constants may depend on the two covariate-density bounds. The same upper risk bound holds over the whole degree ellipsoid of $L_2(\mu)$ representations with outcome integral one, including signed or unbounded representations.

If the smoothness interval of the prior in Theorem~\ref{thm:floor} contains $[s_-/q,s_+/q]$ and $R_*>R$, that prior adapts over the bounded nonnegative class at rate
\begin{equation}\label{eq:all-s-bayes-rate}
 n^{-s/(2s+q)}(\log n)^{q/(2s+q)}.
\end{equation}
For each fixed $b_0>0$, it attains the rate without the logarithmic factor on the subclass $h\ge b_0$ when $s>q$ for Legendre and $s>q/2$ for Chebyshev. This last conclusion is uniform on compact intervals separated from the respective thresholds.
\end{theorem}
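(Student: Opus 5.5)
The proof has three parts: a lower bound, an upper bound from a test-based estimator, and the Bayesian rates obtained from Theorem~\ref{thm:floor}. Throughout, $\alpha=s/q$ and the norm comparison $\|h-1\|_{s,\mathrm{deg}}\asymp\|\theta\|_{2,s/q}$ from Section~\ref{sec:polynomials} is used. Its constants are uniform on compact $s$-intervals, and the inequality $\|\theta\|_{2,s/q}\le\|h-1\|_{s,\mathrm{deg}}$ holds with constant one.

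\emph{Lower bound.} I would reuse the perturbation argument behind Theorem~\ref{thm:minimax}, but keep the perturbations bounded without appealing to Assumption~\ref{ass:kernel} over the whole ellipsoid. Take $M\asymp n^{1/(2s+q)}$ and the block of $m\asymp M^q$ basis terms of degree between $M$ and $2M$. Consider $h_\omega=1+\gamma\sum_{j}\omega_j\phi_j$ with $\omega\in\{0,1\}^m$ and $\gamma\asymp n^{-1/2}$.
\begin{itemize}
\item The degree norm is of order $\gamma^2 m M^{2s}\asymp 1$, which can be made at most $R^2$.
\item The sup norm is at most $\gamma\sqrt{C_K}\,m^{\rho/2}$. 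For Chebyshev this is $\lesssim n^{-1/2}M^{q/2}\to0$. For Legendre it is $\lesssim n^{-1/2}M^{q}$, which tends to zero only when $s>q/2$.
\end{itemize}
So for low Legendre smoothness I would instead use a Chebyshev-like localized construction: a tensor product of low-degree-in-$x$ bumps, or spread sign patterns chosen so that the sum stays bounded. Concretely, use a Varshamov--Gilbert packing on $\omega$ and verify $\|h_\omega-1\|_\infty\le 1/2$ directly by a Rademacher-type bound. One can choose signs $\omega_j\in\{\pm1\}$ with $\|\sum\omega_j\phi_j\|_\infty\lesssim\sqrt{m\log m}$ via a Salem--Zygmund type bound, since these are polynomials of degree $\le 2M$. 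This costs only a logarithm, which I would remove by shrinking $\gamma$ by a $\log$ factor only if needed. I expect this to be the main obstacle: obtaining a packing of $2^{cm}$ points that stays uniformly bounded and nonnegative without the lost logarithm. Once boundedness and $h_\omega\ge1/2$ hold, \eqref{eq:inverse-H} and \eqref{eq:global-H} give the separation and KL bounds, and Fano/Assouad finishes as in Appendix~\ref{app:minimax}.

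\emph{Upper bound.} I would use a Le Cam--Birgé tournament estimator built from Hellinger tests. Take a $d_{G}$-net of the degree ellipsoid $\{\theta:\|\theta\|_{2,s/q}\le CR\}$ at radius $\eps\asymp n^{-s/(2s+q)}$. By \eqref{eq:global-H} an $\ell_2$-net of the ellipsoid suffices. Its log-cardinality is $\lesssim\eps^{-q/s}\asymp n\eps^2$, by the standard ellipsoid entropy (truncate at $J\asymp\eps^{-1/\alpha}$ and cover a $J$-ball). Since tests between conditional densities with unknown $G_0$ exist with error $e^{-cnd_{G_0}^2}$ via the joint Hellinger distance, Birgé's construction yields risk $\lesssim\eps^2$. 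This argument uses neither boundedness, positivity, nor $G_0$, so it covers signed or unbounded representations. The only dependence on $\overline g$ is through the constant in \eqref{eq:global-H}.

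\emph{Bayesian rates.} With $\alpha=s/q$ and $R_*>R$, the true coefficients lie in $\Theta_\alpha(R)$ and satisfy $0\le h\le B$. Theorem~\ref{thm:floor} then gives the rate $n^{-\alpha/(2\alpha+1)}(\log n)^{1/(2\alpha+1)}$, which equals \eqref{eq:all-s-bayes-rate}. For the positive subclass, Proposition~\ref{prop:classical} gives $\rho=2$ (Legendre) or $\rho=1$ (Chebyshev). The second assertion of Theorem~\ref{thm:floor}, with $\alpha_p=s_-/q>\rho/2$, then yields the exact rate uniformly on intervals separated from the thresholds.
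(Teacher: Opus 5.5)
The Bayesian part and the upper-bound strategy (a finite net of the ellipsoid plus a test-based tournament) match the paper. The lower bound, however, has a genuine gap, and it is the real content of the theorem, since it must hold below the thresholds of Proposition~\ref{prop:embedding-sharp}.

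First, your sup-norm estimate drops the factor $\norm{\theta_\omega}_2=\gamma\sqrt m$. Cauchy--Schwarz with \eqref{eq:kernel} actually gives $\gamma\sqrt m\cdot\sqrt{C_K}(cM^q)^{\rho/2}\asymp n^{-1/2}M^{q(1+\rho)/2}$. This is bounded only for $s>q/2$ (Chebyshev) and $s>q$ (Legendre), and the loss is real rather than an artifact of the bound. Every Chebyshev term is at least one at $z_*$, so $\omega\equiv1$ gives $h_\omega(z_*)-1\ge\gamma m\asymp n^{-1/2}M^q$, which diverges for $s<q/2$. The block hypercube therefore fails below threshold for both systems.

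Your random-sign repair does work for Chebyshev. The terms are bounded by $2^{q/2}$, so a Salem--Zygmund or net argument gives $\norm{\sum_j\omega_j\phi_j}_\infty\lesssim\sqrt{m\log M}$ for at least half of the sign vectors. A Varshamov--Gilbert packing can be extracted from that half, and then $\gamma\sqrt{m\log M}\asymp n^{-s/(2s+q)}\sqrt{\log n}\to0$, so no logarithm is lost at all.

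For Legendre, the claimed bound $\sqrt{m\log m}$ is false, because the terms are not uniformly bounded. By Proposition~\ref{prop:classical}, $\sum_j\phi_j(z_*)^2\asymp M^{2q}\asymp m^2$ over the block. Khintchine then gives $\E\norm{\sum_j\omega_j\phi_j}_\infty\ge\E|\sum_j\omega_j\phi_j(z_*)|\gtrsim m$. The perturbation is of size $\gamma m\asymp n^{-1/2}M^q$, unbounded for $s<q/2$. Repairing it needs a polynomial, not logarithmic, shrinkage of $\gamma$, and re-optimizing $M$ then yields only about $n^{-1/2}$ instead of $n^{-2s/(2s+q)}$. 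So the Legendre lower bound at low smoothness is not established by your argument.

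The missing idea is to replace spectrally localized polynomial blocks by spatially localized, non-polynomial bumps. Take $b(z)=z_1\prod_r\eta(z_r)$, with $\eta$ a smooth even cutoff supported in $[-1/4,1/4]$, and $M^q$ disjoint cubes in $[-1/2,1/2]^q$. Set $h_\omega=1+aM^{-s}\sum_k\omega_kb(M(z-z_k))$.
\begin{itemize}
\item Disjoint supports give $\norm{h_\omega-1}_\infty\le a\norm{b}_\infty$ for every $s>0$.
\item Oddness in the outcome coordinate gives outcome integral one.
\item The degree norm is bounded by H\"older interpolation on the coefficients, between $\norm{v_\omega}_2^2\lesssim a^2M^{-2s}$ and $\norm{(I+\mathcal L)^rv_\omega}_2^2\lesssim a^2M^{4r-2s}$. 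Integration by parts has no boundary terms because the bumps vanish near the boundary. With \eqref{eq:spectral-norm}, the powers of $M$ cancel.
\item For Chebyshev, the same bumps are transported by $z=\cos\{\pi(u+1)/2\}$ and differentiated in $u$.
\end{itemize}
Your passing mention of a ``localized construction'' points this way, but it gives neither the construction nor this degree-norm control, which is the substantive step.

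A smaller point on the upper bound. A Birg\'e-type tournament selects its output using distances between candidates, and $d_{G_0}$ is unknown to the estimator. The paper uses $d_{\mu_x}$ between candidates and transfers the guarantees of Lemma~\ref{lem:tests} with the threshold $1024\,\overline g/\underline g$. The risk constant therefore depends on both covariate bounds, not only on $\overline g$ through \eqref{eq:global-H} as you state.
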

Appendix~\ref{app:all-s} constructs the estimator using tests among finitely many approximating densities. For the lower bound, it constructs localized perturbations whose integrals over outcomes equal zero. Their supports are disjoint, so their sum remains uniformly bounded at every $s>0$. This gives the lower bound below the thresholds for uniform boundedness of the full ellipsoid. The resulting minimax benchmark applies at every positive smoothness; the Bayesian bound on the general bounded class is established up to the displayed logarithmic factor.

\section{Independent Gaussian coefficients}\label{sec:ordinary}
The exact-rate prior in Section~\ref{sec:adaptive} uses smoothness-dependent Gaussian variances, conditions the coefficients on an ellipsoid, and mixes over a sample-size-dependent smoothness grid. We now consider a simpler specification: independent Gaussian coefficients with a common variance and a prior only on the truncation level. This prior does not use a smoothness index or an ellipsoid restriction and is independent of sample size. The bounds below retain logarithmic factors.

For fixed $\lambda,\tau>0$, let
\begin{equation}\label{eq:ordinaryprior}
 \pi_\zeta(J)=Z_\zeta^{-1}e^{-\lambda J[\log(e+J)]^\zeta},\qquad
 \theta_j\mid J\stackrel{\mathrm{iid}}\sim N(0,\tau^2),\quad j\le J,
 \qquad 0\le\zeta\le1.
\end{equation}
Here $J\in\N_0$, $Z_\zeta$ is the finite normalizing sum over $J$, and coefficients above $J$ are zero. The conditional density is $p_\theta$ in \eqref{eq:series-model}. The fixed parameter $\zeta$ controls how strongly the prior penalizes large truncation levels: $\zeta=0$ gives a geometric tail, whereas $\zeta=1$ gives a penalty of order $J\log J$ in the negative log prior probability.

\begin{theorem}[Contraction under independent Gaussian coefficients]\label{thm:ordinary}
Suppose $g_0\le\overline g$, Assumption~\ref{ass:kernel} holds, and $p_0\in\cF_{\alpha_0}(R_0,b_0)$ with $\rho/2<\alpha_-\le\alpha_0\le\alpha_+$. The truth radius $R_0$ may be any fixed finite nonnegative number. Define
\begin{equation}\label{eq:ordinaryrate}
 \widetilde\eps_n(\alpha)=\left(\frac n{\log n}\right)^{-\alpha/(2\alpha+1)},\qquad
 \eps_{n,\zeta}(\alpha)=\widetilde\eps_n(\alpha)(\log n)^{(1-\zeta)/2}.
\end{equation}
There are uniform constants $M,c,C>0$ such that the posterior under \eqref{eq:ordinaryprior} satisfies, for all sufficiently large $n$,
\begin{equation}\label{eq:ordinary-bound}
 \E_0\Pi\{d_{G_0}(p_\theta,p_0)>M\eps_{n,\zeta}(\alpha_0)\mid\cD_n\}
 \le Ce^{-cn\widetilde\eps_n(\alpha_0)^2}.
\end{equation}
Expected posterior-average squared Hellinger loss and predictive squared Hellinger risk are $O(\eps_{n,\zeta}(\alpha_0)^2)$. The constants are uniform over $\alpha_0\in[\alpha_-,\alpha_+]$, the stated truth classes, and covariate densities bounded above by $\overline g$, for fixed class bounds, basis-growth constants, and prior parameters. For each fixed true distribution, the posterior probability in \eqref{eq:ordinary-bound} tends to zero almost surely along an infinite iid sample.
\end{theorem}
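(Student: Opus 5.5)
We follow the general rate theorem of \citet{GGV2000}, with the sieve and tests taken from the proof of Theorem~\ref{thm:adaptive}. Write $\widetilde\eps=\widetilde\eps_n(\alpha_0)$, $\eps=\eps_{n,\zeta}(\alpha_0)$, and $\bar J\asymp n\widetilde\eps^{\,2}/\log n=(n/\log n)^{1/(2\alpha_0+1)}$, the dimension that balances $\bar J^{-\alpha_0}$ against $\widetilde\eps$. We verify three conditions: prior mass, sieve mass, and entropy. We then combine them through the standard exponential expectation bound, which yields $Ce^{-cn\widetilde\eps^{\,2}}$.

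\emph{Prior mass.} We take $\theta$ in an $\ell_2$-ball of radius $\widetilde\eps$ around $\theta_0^{(\bar J)}$, restricted to the first $\bar J$ coordinates. Since $\alpha_0>\rho/2$, the truncation $h_{\theta_0^{(\bar J)}}$ is within $CR_0\bar J^{\rho/2-\alpha_0}$ of $h_0$ in sup norm. On this ball we also need sup-norm control of $h_\theta-h_{\theta_0^{(\bar J)}}$. By Assumption~\ref{ass:kernel} this is at most $\sqrt{C_K}\bar J^{\rho/2}\|\theta-\theta_0^{(\bar J)}\|_2$, so we shrink the ball radius to $\widetilde\eps\,\bar J^{-\rho/2}$ (polynomially small). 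Then $h_\theta\ge b_0/2$ and $\|h_\theta\|_\infty\le C$ on the ball. In this local regime both representations are bounded away from zero and infinity, and the local likelihood argument used for Theorem~\ref{thm:adaptive} gives $K_{G_0}$ and $V_{G_0}\lesssim\|\theta-\theta_0\|_2^2\lesssim\widetilde\eps^{\,2}$. The Gaussian $N(0,\tau^2I_{\bar J})$ probability of a ball of radius $r$ centred at a point of norm at most $R_0$ is at least $e^{-C\bar J\log(1/r)-R_0^2/\tau^2}$, which is $\ge e^{-C\bar J\log n}$ here. The truncation level has prior mass $\pi_\zeta(\bar J)\ge e^{-C\bar J(\log n)^\zeta}$. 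Both exponents are $\lesssim\bar J\log n\asymp n\widetilde\eps^{\,2}$, so the prior-mass condition holds at $\widetilde\eps$. Any fixed $R_0$ is harmless: it only enters the constant $R_0^2/\tau^2$.

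\emph{Sieve and entropy.} We take the sieve $\cS_n=\{J\le J_n,\ \|\theta\|_2\le r_n\}$. The truncation cutoff $J_n$ is chosen so that $\pi_\zeta(J>J_n)\le e^{-\lambda J_n(\log J_n)^\zeta}\le e^{-Kn\widetilde\eps^{\,2}}$, which requires $J_n\asymp n\widetilde\eps^{\,2}/(\log n)^\zeta$. The radius is $r_n=\sqrt n$, using Gaussian tails of a $\chi^2_J$ variable. By \eqref{eq:global-H}, Hellinger entropy is bounded by $\ell_2$ entropy:
\[
 \log N(\eps,\cS_n,d_{G_0})\le J_n\log(3\sqrt{\overline g}\,r_n/\eps)\lesssim J_n\log n\asymp n\widetilde\eps^{\,2}(\log n)^{1-\zeta}=n\eps^2 .
\]
The entropy condition therefore holds at the rate $\eps$. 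The mismatch between $\eps$ and $\widetilde\eps$ is exactly the factor $(\log n)^{(1-\zeta)/2}$. Hellinger tests for the iid pairs $(Y,X)$ exist for $d_{G_0}$, as in the proof of Theorem~\ref{thm:adaptive}. The outcome of this step is a posterior probability bound with exponent $cn\widetilde\eps^{\,2}$, because the denominator lower bound and the sieve complement both operate at level $n\widetilde\eps^{\,2}$, while the test errors are $e^{-cnM^2\eps^2}\le e^{-cn\widetilde\eps^{\,2}}$.

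\emph{Risk, almost-sure convergence, and the main obstacle.} Risk and almost-sure statements follow exactly as in Corollary~\ref{cor:risk}: $n\widetilde\eps^{\,2}$ grows polynomially, so the bounds are summable and Borel--Cantelli applies. Uniformity holds because every constant above depends only on $R_0,b_0,C_K,\rho,\overline g,\alpha_\pm$. The main obstacle is the prior-mass step. The unweighted Gaussian prior does not match the smoothness of the truth, so we must check that the required small-ball radius $\widetilde\eps\,\bar J^{-\rho/2}$ costs only $\bar J\log n$ and not more. This holds because the radius is polynomial in $n$. We must also check that the KL neighbourhood needs sup-norm closeness, and not only $\ell_2$ closeness, to keep $h_\theta$ away from zero.
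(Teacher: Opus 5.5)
Your proposal is correct and follows essentially the same route as the paper. The paper also uses a prior-mass event at $J\asymp(n/\log n)^{1/(2\alpha_0+1)}$ costing order $\bar J\log n=n\widetilde\eps^{\,2}$, and a sieve with $J\lesssim n\widetilde\eps^{\,2}/(\log n)^\zeta$ whose entropy $J_n\log n$ is of order $n\eps^2$. It likewise combines tests with a Bernstein-type denominator bound at exponent $n\widetilde\eps^{\,2}$; that bound is where the bounded log ratio from your positivity step enters. The differences are cosmetic. The paper keeps radius $r\widetilde\eps$, because $\bar J^{\rho/2}\widetilde\eps\asymp\bar J^{\rho/2-\alpha_0}\to0$ already gives sup-norm control. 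It also truncates coefficients by $\max_j|\theta_j|\le\sqrt{L_2n\widetilde\eps^{\,2}}$ rather than by your $\ell_2$ radius $\sqrt n$.
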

In this proof, the negative log prior probability of a small neighborhood of the approximating coefficients is bounded at order $J\log n$, rather than order $J$ as for the smoothness-matched prior. Balancing $J^{-2\alpha}$ with $J\log n/n$ gives $\widetilde\eps_n(\alpha)$. Controlling the larger range of truncation levels allowed by a heavier dimension-prior tail gives the additional factor $(\log n)^{(1-\zeta)/2}$.

Consequently, $\zeta=1$ gives the upper rate $(n/\log n)^{-\alpha/(2\alpha+1)}$, whereas $\zeta=0$ gives this rate multiplied by $\sqrt{\log n}$. Appendix~\ref{app:ordinary} proves these bounds. They are upper bounds for this prior and proof; no matching lower bound establishes the necessity of the logarithmic factors.

\section{Selection of polynomial terms}\label{sec:sparse}
\subsection{A prior on subsets}
The preceding priors retain the first $J$ terms in a fixed ordering. A representation may instead be well approximated by a small number of terms scattered through that ordering. We therefore place a prior on subsets of a finite dictionary. Sparsity here refers to polynomial terms, not directly to covariates: selecting a term need not identify a unique set of relevant covariates.

Let $\mathcal B_{K_n}=\{\phi_1,\ldots,\phi_{K_n}\}$, where $1\le K_n\le n^\kappa$ for fixed $\kappa\ge0$. The orthonormal dictionary has zero outcome integrals and satisfies Assumption~\ref{ass:kernel}; completeness is not required. For $S\subseteq\{1,\ldots,K_n\}$ and $k=|S|$, set
\[
 c_n(S)=k\log n+\log\binom{K_n}{k}.
\]
The first term accounts for concentrating the coefficient prior near a $k$-term approximation. The second is the logarithm of the number of possible subsets of size $k$, and penalizes the choice of term locations.

For fixed $A_s>1$, draw the subset and coefficients according to
\begin{equation}\label{eq:supportprior}
 \pi_n(S)=\frac{e^{-A_sc_n(S)}}{\sum_Te^{-A_sc_n(T)}},\qquad
 \theta_j\mid S\stackrel{\mathrm{iid}}\sim N(0,\tau^2),\quad j\in S.
\end{equation}
All other coefficients are zero. In this section, $\Pi_n$ denotes the prior on $(S,\theta)$ and its induced prior on conditional densities. The normalizing sum in \eqref{eq:supportprior} runs over individual subsets, including the empty set. It lies in $[1,2]$ for $n\ge2$, as shown in Appendix~\ref{app:sparse}.

\subsection{Approximation and oracle contraction}
We compare the posterior with the best available approximation after accounting for the complexity of its selected terms. This is a comparison used to state the risk bound, not an optimization requiring knowledge of the truth to construct the prior.

Suppose the true density has normalized square-root representation $h_0$ with
\begin{equation}\label{eq:sparsetruth}
 b_0\le h_0\le B_0,\qquad \|h_0-1\|_2\le R_0,
 \qquad b_0>0.
\end{equation}
The truth need not belong to the closed span of the dictionary. Define its finite projection by
\[
 \theta_{0j}=\int(h_0-1)\phi_j\,d\mu,\qquad
 h_{0,S}=1+\sum_{j\in S}\theta_{0j}\phi_j.
\]
Orthonormality gives
\[
 \|h_{0,S}-h_0\|_2^2
 =\|h_0-1\|_2^2-\sum_{j\in S}\theta_{0j}^2.
\]
To apply the local likelihood bound, restrict the comparison to sets satisfying
\begin{equation}\label{eq:admissible}
 \cS_n^+=\{S:\|h_{0,S}-h_0\|_\infty\le b_0/4\}.
\end{equation}
For these sets, $h_{0,S}\ge3b_0/4$, so a sufficiently small coefficient neighborhood also remains positive and the local likelihood bound applies. The condition restricts the approximations against which we compare the posterior. It does not restrict prior draws, and computing the prior does not require knowing which sets are admissible. Define
\begin{equation}\label{eq:sparseoracle}
 \eta_n^2=\min_{S\in\cS_n^+}
 \left\{\|h_{0,S}-h_0\|_2^2+
 \frac{(|S|+1)\log n+\log\binom{K_n}{|S|}}{n}\right\},
\end{equation}
with $\eta_n=\infty$ if $\cS_n^+$ is empty. The minimum exists whenever $\cS_n^+$ is nonempty because the dictionary is finite. The additional $\log n/n$ term ensures $n\eta_n^2\ge\log n$, including when the minimizing set is empty.

\begin{theorem}[Oracle contraction]\label{thm:sparse}
Maintain the dictionary conditions and \eqref{eq:sparsetruth}, and suppose $g_0\le\overline g$. For every fixed $q_0>0$, there are $M,L>0$ and $n_0$ such that, whenever $n\ge n_0$ and $\eta_n\le1$,
\begin{equation}\label{eq:sparse-contraction}
 \E_0\Pi_n\{d_{G_0}(p_\theta,p_0)>M\eta_n
 \ \text{or}\ c_n(S)>Ln\eta_n^2\mid\cD_n\}
 \le4e^{-q_0n\eta_n^2}.
\end{equation}
Expected posterior-average squared Hellinger loss and predictive squared Hellinger risk are at most $(M^2+8)\eta_n^2$. Constants are uniform in $n,K_n,G_0$, and $h_0$, for fixed $q_0$, dictionary, scalar bounds, and prior parameters. In particular, the conclusions hold eventually whenever $\eta_n\to0$.
\end{theorem}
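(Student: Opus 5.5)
We follow the general posterior-contraction scheme of \citet{GGV2000}, in its exponential-expectation form, with the model index replaced by the subset $S$. The proof has three ingredients: a prior-mass lower bound near an oracle set, a sieve with entropy and remaining-mass bounds, and tests. Fix a minimizer $S^*\in\cS_n^+$ of \eqref{eq:sparseoracle}, write $k^*=|S^*|$, and set $\eta=\eta_n$.

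\emph{Step 1: prior mass.} Consider the Euclidean ball $\{\theta:\mathrm{supp}\,\theta=S^*,\ \|\theta-\theta_{0,S^*}\|_2\le r\}$ with $r=\eta/(C\sqrt{n})$, or more generally $r=n^{-a}$ for a fixed $a$. On it, Assumption~\ref{ass:kernel} together with $K_n\le n^\kappa$ gives $\|h_\theta-h_{0,S^*}\|_\infty\le\sqrt{C_K}K_n^{\rho/2}r\le b_0/4$ once $a$ is large. Hence $h_\theta\ge b_0/2$ and $\|h_\theta\|_\infty\le B_0+1$. For representations bounded away from zero and above, the local likelihood bound used in Section~\ref{sec:adaptive} (the one invoked for Theorem~\ref{thm:adaptive}) gives
\[
K_{G_0}(p_0,p_\theta)\vee V_{G_0}(p_0,p_\theta)\lesssim\|h_\theta-h_0\|_2^2\le2\|h_{0,S^*}-h_0\|_2^2+2r^2\lesssim\eta^2.
\]
The prior mass of this set is at least $\pi_n(S^*)$ times a Gaussian ball probability in $\R^{k^*}$. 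Since $\|\theta_{0}\|_2\le R_0$, that Gaussian probability is at least $e^{-Ck^*\log n}$. Because the normalizing sum lies in $[1,2]$, $\pi_n(S^*)\ge e^{-A_sc_n(S^*)}/2$. The total is therefore at least $e^{-C'n\eta^2}$, with $C'$ depending only on $A_s$, $\tau$, $R_0$ and $a$.

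\emph{Step 2: sieve and remaining mass.} Define $\mathcal P_n=\{(S,\theta):c_n(S)\le Ln\eta^2,\ \|\theta\|_\infty\le n\}$. Summing over subsets, $\Pi_n\{c_n(S)>Ln\eta^2\}\le2\sum_{k}\binom{K_n}{k}e^{-A_s(k\log n+\log\binom{K_n}k)}\mathbf 1\{\cdots\}$. Since $A_s>1$, the term $\binom{K_n}{k}^{1-A_s}n^{-A_sk}$ leaves a factor $e^{-(A_s-1)c_n(S)}$ after the counting. This gives the bound $e^{-(A_s-1)Ln\eta^2/2}$ for large $n$, which exceeds any prescribed multiple of the prior-mass exponent once $L$ is large; this is where $A_s>1$ is used. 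The Gaussian tail of $\{\|\theta\|_\infty>n\}$ is $e^{-cn^2}$, which is negligible. For entropy, \eqref{eq:global-H} converts $d_{G_0}$-covers into $\ell_2$-covers. Each $S$ contributes $(Cn^{3/2}/\eta)^{|S|}$, and the number of sets is bounded by $\sum_k\binom{K_n}{k}$ over the admissible $k$. Hence $\log N(\eta,\mathcal P_n,d_{G_0})\lesssim\max c_n(S)\lesssim Ln\eta^2$.

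\emph{Step 3: conclusion.} The tests for integrated Hellinger distance with a fixed covariate law (Le Cam--Birgé tests, as in the proof of Theorem~\ref{thm:adaptive}) combine with Steps~1--2 in the standard way. The denominator lower bound holds outside an event of probability $\le e^{-q_0n\eta^2}$. We obtain it from the exponential-moment form of the evidence lower bound, using the $V$-bound and Bernstein's inequality, which is possible because the log-ratios are bounded on the positive neighborhood. Choosing $M$ and $L$ large relative to $q_0$ gives the four terms, each $\le e^{-q_0n\eta^2}$, in \eqref{eq:sparse-contraction}. The risk bound then follows exactly as in Corollary~\ref{cor:risk}: $M^2\eta^2+2\cdot4e^{-q_0n\eta^2}\le(M^2+8)\eta^2$, using $e^{-q_0n\eta^2}\le\eta^2$. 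The latter holds since $n\eta^2\ge\log n$, possibly after enlarging $q_0$, which we may do.

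The main obstacle is making every constant uniform in $K_n$ and $h_0$ while keeping the exact exponent $q_0$. In Step~1 the sup-norm control must absorb $K_n^{\rho/2}\le n^{\kappa\rho/2}$, which forces polynomially small radii and costs only $k^*\log n$. The probability bound for the likelihood-ratio denominator must be exponential in $n\eta^2$ rather than merely of order $1/(n\eta^2)$. For this we first use a Chernoff bound on the likelihood ratio averaged over the prior restricted to the Step~1 ball. This is legitimate because $\log(p_0/p_\theta)$ is bounded by $2\log(2B_0/b_0)$ there.
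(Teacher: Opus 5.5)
Your proposal is correct and follows essentially the same route as the paper. The shared steps are: a polynomially small coefficient ball around the admissible oracle projection; the Kraft-type bound $\sum_S e^{-c_n(S)}\le2$ together with $A_s>1$ for the complexity tail; a sieve with bounded coefficients; the tests of Lemma~\ref{lem:tests}; the Bernstein denominator bound with a large margin; and enlarging $q_0$ (the paper uses $\max(q_0,2)$) for the risk bound. Two cosmetic differences: you must commit to radius $n^{-a}$ with $a>1/2+\kappa\rho/2$, as the paper does, rather than $\eta_n/(C\sqrt n)$, which fails when $\kappa\rho\ge1$. Your coefficient bound $\|\theta\|_\infty\le n$ in place of the paper's $\sqrt{L_2n\eta_n^2}$ works equally well, since $\eta_n\le1$ makes the Gaussian tail $e^{-cn^2}$ negligible against $e^{Cn\eta_n^2}$.
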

The oracle criterion combines squared approximation error, the coefficient-concentration term $(|S|+1)\log n/n$, and the term-location penalty $\log\binom{K_n}{|S|}/n$. The posterior attains an upper rate determined by their best trade-off over admissible sets. The theorem also controls posterior model complexity, but does not establish consistent selection of a particular subset. Appendix~\ref{app:sparse} gives the proof.

To interpret the bound, suppose admissible $k$-term approximations have squared error of order $k^{-2\gamma}$. Because $K_n\le n^\kappa$, the complexity contribution for nonempty sets is of order at most $k\log n/n$. Balancing these two terms gives $k\asymp(n/\log n)^{1/(2\gamma+1)}$. Here $\gamma$ describes approximation by selected terms; it need not equal the smoothness index associated with retaining the first $J$ terms.

\begin{corollary}[Rates under sparse approximation]\label{cor:bestk}
Fix $\gamma>0$, $R\ge0$, and $0<a\le b<\infty$. Put $I_n=(n/\log n)^{1/(2\gamma+1)}$. Suppose that for every sufficiently large $n$ there are an integer $k_n$ and $S_n\in\cS_n^+$ such that
\[
 aI_n\le k_n\le bI_n,\qquad |S_n|\le k_n,\qquad
 \|h_{0,S_n}-h_0\|_2^2\le R^2k_n^{-2\gamma}.
\]
Then, eventually,
\[
 \eta_n\le\{R^2a^{-2\gamma}+(\kappa+1)b+1\}^{1/2}
 \left(\frac n{\log n}\right)^{-\gamma/(2\gamma+1)},
\]
so Theorem~\ref{thm:sparse} applies.
\end{corollary}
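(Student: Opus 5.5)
The plan is to plug the set $S_n$ into the minimization \eqref{eq:sparseoracle} and bound the three terms in turn. Since $S_n\in\cS_n^+$, the set $\cS_n^+$ is nonempty and
\[
 \eta_n^2\le \|h_{0,S_n}-h_0\|_2^2+\frac{(|S_n|+1)\log n+\log\binom{K_n}{|S_n|}}{n}.
\]

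The approximation term is at most $R^2k_n^{-2\gamma}\le R^2a^{-2\gamma}I_n^{-2\gamma}$, using $k_n\ge aI_n$.

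For the complexity terms, write $k=|S_n|\le k_n\le bI_n$.
\begin{itemize}
\item The location penalty satisfies $\log\binom{K_n}{k}\le k\log K_n\le \kappa k\log n$, since $\binom{K}{k}\le K^k$ and $K_n\le n^\kappa$. This holds also for $k=0$, where the term vanishes.
\item The coefficient term gives $(k+1)\log n\le (bI_n+1)\log n$.
\end{itemize}
Hence the complexity contribution is at most
\[
 \{(\kappa+1)bI_n+1\}\frac{\log n}{n}.
\]

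Next I use the identity $I_n\log n/n=(n/\log n)^{-2\gamma/(2\gamma+1)}=I_n^{-2\gamma}$. It follows from $I_n^{2\gamma+1}=n/\log n$, since then $I_n\log n/n=I_n\cdot I_n^{-(2\gamma+1)}$. The leftover $\log n/n$ is also at most $I_n^{-2\gamma}$, because $I_n\ge1$ for large $n$. Collecting terms gives
\[
 \eta_n^2\le\{R^2a^{-2\gamma}+(\kappa+1)b+1\}(n/\log n)^{-2\gamma/(2\gamma+1)}.
\]
Taking square roots yields the stated bound.

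Finally, this bound tends to zero, so $\eta_n\le1$ for all sufficiently large $n$, and Theorem~\ref{thm:sparse} applies eventually. There is no real obstacle. The only points needing care are the crude binomial bound and absorbing the $+1$ from $(|S|+1)\log n$ into the rate. The latter uses $\log n/n\le I_n^{-2\gamma}$, which holds once $I_n\ge1$.
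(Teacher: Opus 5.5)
Your proof is correct and matches the paper's argument: substitute $S_n$ into the oracle criterion, bound $\log\binom{K_n}{|S_n|}\le\kappa|S_n|\log n$, and use $I_n\log n/n=I_n^{-2\gamma}$ together with $\log n/n\le I_n^{-2\gamma}$ once $I_n\ge1$. Your conclusion, that the bound tends to zero so $\eta_n\le1$ eventually and Theorem~\ref{thm:sparse} applies, is also how the paper finishes.
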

The assumption requires one admissible approximation at each sufficiently large sample size, with $k_n$ of order $I_n$. Substitution into the oracle criterion gives the bound; see Appendix~\ref{app:selection-details}.

\subsection{A lower bound for selection}
The oracle bound includes a penalty for choosing term locations. We now show that a contribution of this order is unavoidable on a sparse class, under the stated size restriction on the dictionary. This lower bound concerns the location penalty, not the full coefficient-concentration term in the upper bound.

Let $\mathcal P_{k,K}$ be the class of normalized-square densities with at most $k$ nonzero coefficients among the first $K$ terms, all other coefficients zero, and $1/2\le h_\theta\le3/2$.

\begin{theorem}[Selection cost]\label{thm:sparse-lower}
Under Assumptions~\ref{ass:design}--\ref{ass:kernel}, there are $c_1,c_2,c_3>0$, depending only on the fixed covariate-density and basis-growth bounds, such that, for positive integers $k,K$ with $K\ge16k$, $k\log\lfloor K/k\rfloor\ge c_1$, and
\begin{equation}\label{eq:sparse-local-condition}
 kK^\rho\log(K/k)\le c_2n,
\end{equation}
we have
\begin{equation}\label{eq:sparse-lower}
 \inf_{\widehat p_n}\sup_{p_0\in\mathcal P_{k,K}}
 \E_{p_0,G_0}^n d_{G_0}^2(\widehat p_n,p_0)
 \ge c_3\frac{k\log(K/k)}{n}.
\end{equation}
\end{theorem}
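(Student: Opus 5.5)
We will apply Fano's method, using a Varshamov--Gilbert packing of sparse supports. By the sparse Varshamov--Gilbert lemma (e.g.\ Massart's version, or that of Rigollet--Tsybakov), when $K\ge16k$ there is a family $\Omega$ of vectors $\omega\in\{0,1\}^K$ with the following properties: each has exactly $k$ ones; any two distinct members differ in at least $k/2$ coordinates; and
\[
\log|\Omega|\ge c\,k\log(K/k).
\]
For each $\omega$ we set
\[
\theta^\omega_j=\gamma\,\omega_j,\qquad j\le K,
\]
with $\gamma>0$ to be chosen, and take $p_\omega=p_{\theta^\omega}$. We then check that each $p_\omega$ lies in $\mathcal P_{k,K}$. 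The vector $\theta^\omega$ has at most $k$ nonzero coefficients, and it has outcome integral one by construction. By Assumption~\ref{ass:kernel} and Cauchy--Schwarz,
\[
\|h_{\theta^\omega}-1\|_\infty\le\sqrt{C_K}\,K^{\rho/2}\gamma\sqrt k .
\]
Hence $1/2\le h_{\theta^\omega}\le3/2$ as soon as $\gamma^2kK^\rho\le 1/(4C_K)$.

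\emph{Separation.} Every representation $h_{\theta^\omega}$ is nonnegative and bounded by $B=3/2$, so \eqref{eq:inverse-H} of Proposition~\ref{prop:geometry} applies. Using $g_0\ge\underline g$, it gives for $\omega\neq\omega'$
\[
d_{G_0}^2(p_\omega,p_{\omega'})\ge c\,\|\theta^\omega-\theta^{\omega'}\|_2^2\ge c\,\gamma^2 k/2 .
\]
\emph{KL control.} The densities are bounded below, since $h\ge1/2$ and $Q_\theta\le(3/2)^2$. They are also bounded above, since $h^2\le 9/4$ and $Q_\theta\ge1$. Therefore $\log(p_\omega/p_{\omega'})$ is uniformly bounded, and the Kullback--Leibler divergence is comparable to the squared Hellinger distance. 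Combining this with \eqref{eq:global-H} and $g_0\le\overline g$,
\[
K_{G_0}(p_\omega,p_{\omega'})\le C\,d_{G_0}^2(p_\omega,p_{\omega'})\le C\overline g\,\|\theta^\omega-\theta^{\omega'}\|_2^2\le 2C\overline g\,\gamma^2k .
\]
Since the covariate distribution $G_0$ is common to all hypotheses, the Kullback--Leibler divergence between the $n$-sample laws is $n$ times this quantity.

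\emph{Choice of $\gamma$ and Fano.} We choose
\[
\gamma^2=c_0\,\frac{\log(K/k)}{n}
\]
with $c_0$ small. Then $n\max K\le 2C\overline g\,c_0\,k\log(K/k)\le \tfrac18\log|\Omega|$. Fano's lemma, together with the weak triangle inequality for $d_{G_0}$, yields a minimax risk of at least
\[
c\,\gamma^2k\asymp \frac{k\log(K/k)}{n}.
\]
Two conditions of the theorem enter here. The requirement $k\log\lfloor K/k\rfloor\ge c_1$ ensures $|\Omega|$ is large enough that the Fano bound $1-(n\max K+\log2)/\log|\Omega|$ is at least $1/2$. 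Condition \eqref{eq:sparse-local-condition}, with $c_2\le1/(4C_Kc_0)$, is exactly what makes $\gamma^2kK^\rho\le 1/(4C_K)$, so the perturbations stay in $[1/2,3/2]$.

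The main obstacle is this sup-norm containment of the hypotheses. Without it, $h$ could leave $[1/2,3/2]$, and we would lose both membership in $\mathcal P_{k,K}$ and the Hellinger--KL equivalence; this is why the crude bound through Assumption~\ref{ass:kernel} forces condition \eqref{eq:sparse-local-condition}. A secondary point is to verify carefully the constants in the KL-to-Hellinger comparison for densities with bounded likelihood ratios.
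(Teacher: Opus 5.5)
Your proposal is correct and follows essentially the paper's argument: Fano's inequality over a sparse packing, sup-norm containment in $[1/2,3/2]$ via Assumption~\ref{ass:kernel} and condition \eqref{eq:sparse-local-condition}, Hellinger separation from \eqref{eq:inverse-H} using $g_0\ge\underline g$, and a Kullback--Leibler bound from bounded likelihood ratios. The differences are minor: you cite a sparse Varshamov--Gilbert lemma where the paper builds a group-structured packing ($k$ groups of $m=\lfloor K/k\rfloor$ terms, one term selected per group) by a greedy argument, and you bound pairwise divergences where the paper bounds divergences to the reference density $p=1$.
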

Appendix~\ref{app:sparselower} constructs subsets with different term locations. Condition~\eqref{eq:sparse-local-condition} bounds the representations of the resulting alternatives between $1/2$ and $3/2$. Their Hellinger separation and the Kullback--Leibler bound relative to $p=1$ allow application of Fano's inequality. The result establishes a lower risk contribution of order $k\log(K/k)/n$ for choosing term locations. It does not establish necessity of the additional $k\log n$ term in the oracle upper bound.

\section{Conclusion}\label{sec:scope}
We study conditional densities represented by normalized squares of orthogonal series. For positive, sufficiently smooth representations, a prior on smoothness, truncation level, and ellipsoid-restricted Gaussian coefficients gives adaptive posterior contraction at the minimax rate without a logarithmic factor. Its posterior predictive density attains the matching squared Hellinger risk.

Allowing a small mixture with the reference density extends the Bayesian result to bounded representations that may vanish and to every positive coefficient smoothness. The resulting bound contains a logarithmic factor, while the same prior retains the exact rate on the positive, sufficiently smooth subclass. For bounded nonnegative Legendre and Chebyshev degree classes, a separate testing-based estimator establishes the exact minimax benchmark at every $s>0$.

\clearpage
\appendix

\section{Normalization and approximation}\label{app:additional}
\subsection{Reference measures and measurable versions}\label{app:versions}
Suppose $\mu_y(dy)=w_y(y)\,dy$. Then $f(y\mid x)=w_y(y)p(y\mid x)$ represents the same conditional distribution as $p$. Substitution in \eqref{eq:hell}--\eqref{eq:var} leaves all three quantities unchanged, including when a logarithmic integral is infinite. Under any distribution dominated by $\mu_y\otimes G_0$, $0<w_y(Y)<\infty$ almost surely. The factor $\prod_{i=1}^n w_y(Y_i)$ therefore cancels from the posterior for every finite sample. This argument also applies to a prior that depends on $n$.

For $\theta\in\ell_2$, write $h_{\theta,m}=1+\sum_{j=1}^m\theta_j\phi_j$. Orthonormality gives
\[
 \|h_{\theta,m}-h_{\theta,k}\|_2^2
 =\sum_{j=k+1}^m\theta_j^2,\qquad m>k,
\]
so the partial sums converge to a measurable $h_\theta$ in $L_2(\mu)$. For every $v\in L_2(\mu)$, Cauchy--Schwarz and Fubini give
\[
 \int\left|\int v(y,x)\,\mu_y(dy)\right|^2\mu_x(dx)
 \le\int |v|^2\,d\mu.
\]
Applying this inequality to $h_{\theta,m}-h_\theta$ shows that $\int h_\theta(y,x)\,\mu_y(dy)=1$ for $\mu_x$-almost every $x$. Fubini gives $Q_\theta(x)<\infty$ almost everywhere, and Cauchy--Schwarz gives $Q_\theta(x)\ge1$ there. On the exceptional covariate set, set $h_\theta(\cdot,x)=1$. Then $p_\theta(\cdot\mid x)$ integrates to one for every $x$. The change preserves coefficient integrals, joint distributions, and losses whenever $G\ll\mu_x$.

The basis functions can likewise be chosen so that
\[
 \int\phi_j(y,x)\,\mu_y(dy)=0,\qquad
 \int\phi_j(y,x)^2\,\mu_y(dy)<\infty
 \quad\text{for every }j\text{ and }x.
\]
Set all basis functions to zero on the union of their exceptional covariate sets. That union is $\mu_x$-null because the system is countable. Every finite expansion is then jointly measurable in its coefficients and observations and satisfies the required outcome integrals at every $x$. All priors in the paper assign probability one to finite expansions, so these versions suffice to define their likelihoods. Infinite expansions are used for approximation and for the parameter classes; their versions on null sets do not affect the risk bounds.

For the normalized square root in \eqref{eq:canonical}, Tonelli gives
\[
 \|h_p\|_2^2=\int c_p(x)^{-2}\,\mu_x(dx).
\]
Thus $h_p$ belongs to $L_2(\mu)$ if and only if the displayed integral is finite. Existence of the normalized representation does not establish this integrability condition. These normalization arguments require neither completeness of the orthonormal system nor uniform convergence.

\subsection{Normalization and identification}\label{app:model-details}
\begin{proof}[Proof of Proposition~\ref{prop:geometry}]
Fix $x$ for which the assumed outcome integrals are finite. All norms in this paragraph are in $L_2(\mu_y)$. Cauchy--Schwarz gives $Q_h\ge(\int h\,d\mu_y)^2=1$, and likewise $Q_k\ge1$. Put $a=\|h\|_2$, $b=\|k\|_2$, $r=|h|/a$, and $s=|k|/b$. Then $r=\sqrt{p_h(\cdot\mid x)}$, $s=\sqrt{p_k(\cdot\mid x)}$, and
\[
 ab\|r-s\|_2^2
 =\big\||h|-|k|\big\|_2^2-(a-b)^2
 \le\|h-k\|_2^2.
\]
Since $ab\ge1$, integration over $G_0$ and the bound $g_0\le\overline g$ give $d_{G_0}^2(p_h,p_k)\le\overline g\|h-k\|_{L_2(\mu)}^2$. This proves \eqref{eq:global-H}.

Now suppose $0\le h,k\le B$. At each of the same covariate values, $r=h/a$, $s=k/b$, $\int r\,d\mu_y=1/a$, and $\int s\,d\mu_y=1/b$. Consequently,
\[
 (a-b)^2
 =a^2b^2\left\{\int(r-s)\,d\mu_y\right\}^2
 \le a^2b^2\|r-s\|_2^2.
\]
Moreover,
\[
 a^2=\int h^2\,d\mu_y\le B\int h\,d\mu_y=B,
 \qquad b^2\le B.
\]
The identity $\|h-k\|_2^2=(a-b)^2+ab\|r-s\|_2^2$ therefore bounds the squared representation distance by $(B^2+B)\|r-s\|_2^2$. Integrating over $\mu_x$ and using $g_0\ge\underline g$ proves \eqref{eq:inverse-H}.

For an arbitrary conditional density $p$, its normalization and Cauchy--Schwarz imply $0<c_p(x)\le1$ for almost every $x$. The function $h_p=\sqrt p/c_p$ is measurable and satisfies
\[
 \int h_p(y,x)\,\mu_y(dy)=1,\qquad
 Q_{h_p}(x)=c_p(x)^{-2}<\infty,\qquad
 p_{h_p}=p.
\]
If a nonnegative $h$ has outcome integral one and generates $p$, then $h(\cdot,x)=\sqrt{Q_h(x)}\sqrt{p(\cdot\mid x)}$. Integrating over $y$ fixes $\sqrt{Q_h(x)}=c_p(x)^{-1}$, proving uniqueness. Finally, $p\le M$ implies $\sqrt p\ge p/\sqrt M$ and hence $c_p\ge M^{-1/2}$. Combining this with $c_p\le1$ and $p\ge m$ gives $\sqrt m\le h_p\le M$.
\end{proof}
\begin{proof}[Proof of Proposition~\ref{prop:identification}]
By Fubini, for almost every $x$,
\[
 \frac{h(y,x)^2}{Q_h(x)}-\frac{k(y,x)^2}{Q_k(x)}=0
 \qquad\text{for }\mu_y\text{-almost every }y.
\]
The left side is a polynomial in $y$. A nonzero univariate polynomial has finitely many roots. If it vanished almost everywhere, $\mu_y$ would be concentrated on that finite set, contrary to its infinite support. The polynomial is therefore identically zero. Factoring gives, at each such $x$,
\[
 \frac{h(\cdot,x)}{\sqrt{Q_h(x)}}
 =\frac{k(\cdot,x)}{\sqrt{Q_k(x)}}
 \quad\text{or}\quad
 \frac{h(\cdot,x)}{\sqrt{Q_h(x)}}
 =-\frac{k(\cdot,x)}{\sqrt{Q_k(x)}}.
\]
The negative identity is impossible because both outcome integrals are positive. Integrating the positive identity gives $Q_h(x)=Q_k(x)$, and hence $h(\cdot,x)=k(\cdot,x)$. Equality of two finite expansions in the same orthonormal basis then implies equality of their coefficients after padding with zeros.

For the tensor expansion, collect coefficients with the same outcome degree to obtain $h_\theta=1+\sum_{k\ge1}a_k(x)\psi_k(y)$. The sum is finite. Orthogonality in $L_2(\mu_y)$ removes its cross terms and gives $Q_\theta(x)=1+\sum_{k\ge1}a_k(x)^2$, as in \eqref{eq:normalizer}.
\end{proof}

\subsection{Approximation, covering numbers, and likelihood bounds}\label{app:approximation}
\begin{lemma}[Approximation and interpolation]\label{lem:interpolation}
Under Assumption~\ref{ass:kernel}, there is a constant $C$, depending only on $C_K,\rho,\alpha_-$, such that, uniformly for $\alpha\ge\alpha_->\rho/2$,
\begin{align}
 \esssup_z\sum_{j>m}j^{-2\alpha}\phi_j(z)^2&\le Cm^{\rho-2\alpha},\qquad m\ge1,\label{eq:weightedtail}\\
 \|\theta-\theta^{(J)}\|_2&\le R(J+1)^{-\alpha},\qquad\theta\in\Theta_\alpha(R),\label{eq:tail}\\
 \left\|\sum_{j\ge1}\delta_j\phi_j\right\|_\infty
 &\le C\|\delta\|_2^{1-\rho/(2\alpha)}\|\delta\|_{2,\alpha}^{\rho/(2\alpha)},
 \qquad\|\delta\|_{2,\alpha}<\infty.\label{eq:interpolation}
\end{align}
Consequently, $\|h_\theta\|_\infty\le1+CR$ on $\Theta_\alpha(R)$. If $g_0\le\overline g$, then
\begin{equation}\label{eq:approxH}
 d_{G_0}(p_{\theta^{(J)}},p_\theta)
 \le\sqrt{\overline g}\,R(J+1)^{-\alpha}.
\end{equation}
On compact supports, if the basis functions are continuous and \eqref{eq:kernel} holds pointwise, then $\{h_\theta:\theta\in\Theta_\alpha(R)\}$ is compact in the uniform norm.
\end{lemma}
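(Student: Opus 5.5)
The weighted tail bound \eqref{eq:weightedtail} comes first, by summation by parts over dyadic blocks. Write $S_k(z)=\sum_{j\le k}\phi_j(z)^2$, so Assumption~\ref{ass:kernel} gives $S_k\le C_Kk^\rho$ almost everywhere, simultaneously for all $k$ because the set of $k$ is countable. Split $\{j>m\}$ into blocks $(2^{i}m,2^{i+1}m]$, $i\ge0$. On block $i$ the weight satisfies $j^{-2\alpha}\le(2^im)^{-2\alpha}$, and the block sum of $\phi_j^2$ is at most $C_K(2^{i+1}m)^\rho$. Hence
\[
 \sum_{j>m}j^{-2\alpha}\phi_j^2\le C_K2^\rho m^{\rho-2\alpha}\sum_{i\ge0}2^{i(\rho-2\alpha)}.
\]
The geometric series is bounded by $(1-2^{\rho-2\alpha_-})^{-1}$, uniformly for $\alpha\ge\alpha_-$. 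This gives a constant depending only on $C_K,\rho,\alpha_-$.

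The tail bound \eqref{eq:tail} is immediate: for $j\ge J+1$ we have $\sum_{j>J}\theta_j^2\le(J+1)^{-2\alpha}\sum_{j>J}j^{2\alpha}\theta_j^2\le R^2(J+1)^{-2\alpha}$. Then \eqref{eq:approxH} follows from \eqref{eq:tail} together with \eqref{eq:global-H} of Proposition~\ref{prop:geometry} and orthonormality.

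For \eqref{eq:interpolation}, I would split the function at an index $m$ to be optimized. The head is controlled by Cauchy--Schwarz and \eqref{eq:kernel}:
\[
 \Bigl|\sum_{j\le m}\delta_j\phi_j\Bigr|\le\sqrt{C_K}\,m^{\rho/2}\|\delta\|_2 .
\]
The tail is controlled by Cauchy--Schwarz with weights $j^{\alpha}\cdot j^{-\alpha}$ and \eqref{eq:weightedtail}:
\[
 \Bigl|\sum_{j>m}\delta_j\phi_j\Bigr|\le C^{1/2}m^{\rho/2-\alpha}\|\delta\|_{2,\alpha}.
\]
The tail bound also shows that the series converges absolutely almost everywhere. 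Its pointwise sum therefore coincides with the $L_2$ limit, and the essential-supremum bound is legitimate for the infinite series.

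Next choose $m\approx(\|\delta\|_{2,\alpha}/\|\delta\|_2)^{1/\alpha}$, which is at least $1$ since $\|\delta\|_{2,\alpha}\ge\|\delta\|_2$. Rounding to an integer changes the bound only by a factor $2^{\rho/2}$. Both terms then become $C\|\delta\|_2^{1-\rho/(2\alpha)}\|\delta\|_{2,\alpha}^{\rho/(2\alpha)}$, and the constants stay uniform because $\alpha$ enters only through $\alpha\ge\alpha_-$. The bound $\|h_\theta\|_\infty\le1+CR$ follows by taking $\delta=\theta$ and using $\|\theta\|_2\le\|\theta\|_{2,\alpha}\le R$. Alternatively, the $m=1$ version of the tail estimate gives it directly.

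For compactness, continuity of the basis and the pointwise form of \eqref{eq:kernel} make the tail estimate hold at every point. The partial sums then converge uniformly, so each $h_\theta$ is continuous. The map $\theta\mapsto h_\theta$ is continuous from $(\Theta_\alpha(R),\ell_2)$ into $C(\mathcal Y\times\mathcal X)$: by \eqref{eq:interpolation}, $\|h_\theta-h_\eta\|_\infty\le C\|\theta-\eta\|_2^{1-\rho/(2\alpha)}(2R)^{\rho/(2\alpha)}$. The ellipsoid $\Theta_\alpha(R)$ is compact in $\ell_2$, since it is closed, bounded, and has uniformly small tails by \eqref{eq:tail}. The image of a compact set under a continuous map is compact.

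I expect the main obstacle to be the bookkeeping of null sets and uniformity. The assumption is an essential supremum for each $m$ separately, so I need the union of the countably many exceptional sets to be null. I also need the sum of the infinite series to be the same object as its $L_2$ limit; almost-everywhere absolute convergence settles this. Keeping every constant dependent only on $(C_K,\rho,\alpha_-)$ requires monotonicity in $\alpha$ of the geometric-series constant, which holds.
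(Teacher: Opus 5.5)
Your proposal is correct and follows the paper's proof essentially step for step. It uses the same dyadic-block bound for the weighted tail, the same head/tail Cauchy--Schwarz split with $m=\lceil(\|\delta\|_{2,\alpha}/\|\delta\|_2)^{1/\alpha}\rceil$ for the interpolation inequality, and the same derivation of the truncation and Hellinger bounds. The only difference is in packaging the compactness step: you use H\"older continuity of $\theta\mapsto h_\theta$ on the $\ell_2$-compact ellipsoid, whereas the paper extracts a coordinatewise convergent subsequence, places its limit in the ellipsoid by Fatou, and applies the uniform tail bound.
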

\begin{proof}
Partition $j>m$ into the blocks $2^\ell m<j\le2^{\ell+1}m$, $\ell\ge0$. By \eqref{eq:kernel}, the contribution of block $\ell$ to the sum in \eqref{eq:weightedtail} is at most
\[
 C_K2^\rho m^{\rho-2\alpha}2^{\ell(\rho-2\alpha)}.
\]
The geometric series is bounded uniformly over $\alpha\ge\alpha_-$ because $\rho-2\alpha_-<0$. This proves \eqref{eq:weightedtail}. For the omitted coefficients, $j^{2\alpha}\ge(J+1)^{2\alpha}$ gives
\[
 \sum_{j>J}\theta_j^2
 \le(J+1)^{-2\alpha}\sum_{j>J}j^{2\alpha}\theta_j^2
 \le R^2(J+1)^{-2\alpha}.
\]
This proves \eqref{eq:tail}, which in fact holds for every $\alpha>0$.

To prove interpolation, put $u=\|\delta\|_2$ and $v=\|\delta\|_{2,\alpha}$. If $u=0$, the conclusion follows immediately. Otherwise $v\ge u>0$. Apply Cauchy--Schwarz separately to the first $m$ coefficients and the remaining coefficients, using \eqref{eq:kernel} and \eqref{eq:weightedtail}:
\[
 \left\|\sum_j\delta_j\phi_j\right\|_\infty
 \le C\{u m^{\rho/2}+v m^{\rho/2-\alpha}\}.
\]
Taking $m=\lceil(v/u)^{1/\alpha}\rceil$ proves \eqref{eq:interpolation}, with a constant independent of $\alpha\ge\alpha_-$. The same tail calculation gives
\[
 \|h_\theta-h_{\theta^{(J)}}\|_\infty
 \le CRJ^{\rho/2-\alpha}.
\]
Applying \eqref{eq:interpolation} to $\delta=\theta$ bounds $\|h_\theta\|_\infty$. Equation~\eqref{eq:approxH} follows from \eqref{eq:tail} and Proposition~\ref{prop:geometry}.

For compactness, take any sequence of coefficient vectors in $\Theta_\alpha(R)$ and extract a coordinatewise convergent subsequence. Fatou's lemma places its limit in the same ellipsoid. Every fixed finite expansion converges uniformly along the subsequence, and the displayed tail bound is uniform over the ellipsoid and tends to zero. The full expansions therefore converge uniformly to the expansion of the limiting coefficients.
\end{proof}

\begin{lemma}[Ellipsoid covering number]\label{lem:entropy}
For $\alpha$ in a fixed compact subinterval of $(0,\infty)$ and $0<\eps<R$,
\begin{equation}\label{eq:entropy}
 \log N(\eps,\Theta_\alpha(R),\|\cdot\|_2)
 \le C(R/\eps)^{1/\alpha}.
\end{equation}
The constant is uniform for $R$ in compact subintervals of $(0,\infty)$. If $g_0\le\overline g$, the same upper bound, with a constant that may also depend on $\overline g$, applies to the induced density class in $d_{G_0}$. No basis-growth assumption is required.
\end{lemma}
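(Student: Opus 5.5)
The plan is a standard bracketing of the ellipsoid by a finite-dimensional box plus a tail. Fix $\alpha\in[a_1,a_2]\subset(0,\infty)$ and $0<\eps<R$. Choose the truncation level $J=\lceil (2R/\eps)^{1/\alpha}\rceil$. By the tail bound \eqref{eq:tail} of Lemma~\ref{lem:interpolation}, valid for every $\alpha>0$ without Assumption~\ref{ass:kernel}, every $\theta\in\Theta_\alpha(R)$ satisfies $\|\theta-\theta^{(J)}\|_2\le R(J+1)^{-\alpha}\le\eps/2$. So it suffices to cover the projected set $\{\theta^{(J)}:\theta\in\Theta_\alpha(R)\}=\Theta_{\alpha,J}(R)$ by Euclidean balls of radius $\eps/2$ in $\R^J$. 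Appending zeros to the centers and using the triangle inequality then gives an $\eps$-cover of the infinite ellipsoid.

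For the finite-dimensional piece I would use the crude inclusion $\Theta_{\alpha,J}(R)\subseteq\{\vartheta\in\R^J:\|\vartheta\|_2\le R\}$, since $j^{2\alpha}\ge1$. The volumetric bound gives at most $(1+4R/\eps)^J\le(5R/\eps)^J$ balls of radius $\eps/2$. Hence
\[
 \log N(\eps,\Theta_\alpha(R),\|\cdot\|_2)\le J\log(5R/\eps)\lesssim (R/\eps)^{1/\alpha}\log(R/\eps).
\]
This loses a logarithm, and removing it is the main obstacle.

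To remove the logarithm I would refine the box argument by dyadic blocks. Split the indices $j\le J$ into blocks $B_\ell=\{2^{\ell}\le j<2^{\ell+1}\}$, $\ell=0,\ldots,L$, with $2^{L}\asymp J$. On block $\ell$ the constraint forces $\|\vartheta_{B_\ell}\|_2\le R2^{-\ell\alpha}$. Cover block $\ell$ by balls of radius $\eps_\ell$ with $\sum_\ell\eps_\ell^2\le\eps^2/4$. Take $\eps_\ell^2=c\,\eps^2 2^{-(L-\ell)\beta}$ for some small $\beta>0$ (say $\beta=\alpha$), so that the sum is a convergent geometric series bounded by $\eps^2/4$ for suitable $c$ depending only on the compact range of $\alpha$. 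The covering of block $\ell$ then costs
\[
 2^\ell\log\bigl(1+2R2^{-\ell\alpha}/\eps_\ell\bigr).
\]
Using $R2^{-L\alpha}\asymp\eps$, the ratio inside the logarithm is of order $2^{(L-\ell)(\alpha+\beta/2)}$. So block $\ell$ contributes at most $C\,2^\ell\bigl(1+(L-\ell)\bigr)$. Summing over $\ell\le L$ gives $C2^L\sum_{k\ge0}(1+k)2^{-k}\lesssim 2^L\asymp(R/\eps)^{1/\alpha}$. This is the claimed bound \eqref{eq:entropy}.

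All constants depend only on $a_1,a_2$ and on the ratio structure, so they are uniform in $\alpha$ on the compact interval. For $R$ in a compact subinterval, the statement is already scale-invariant in $R/\eps$; any residual dependence on $R$ (the $+1$ inside the logarithm, and $\eps<R$) is bounded uniformly.

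For the density class, Proposition~\ref{prop:geometry} gives $d_{G_0}(p_\theta,p_\eta)\le\sqrt{\overline g}\,\|\theta-\eta\|_2$ with no sign or basis-growth condition. Therefore an $\eps/\sqrt{\overline g}$ cover of $\Theta_\alpha(R)$ maps to an $\eps$-cover of the induced density class in $d_{G_0}$. Applying \eqref{eq:entropy} at radius $\eps/\sqrt{\overline g}$ changes only the constant, by the factor $\overline g^{1/(2\alpha)}$. This factor is bounded uniformly on the compact $\alpha$-range. When $\eps/\sqrt{\overline g}\ge R$, a single ball suffices.
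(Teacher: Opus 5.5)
Your proof is correct, but it removes the logarithm by a different route than the paper.

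The paper truncates at $m=\lceil(4R/\eps)^{1/\alpha}\rceil$ and treats the truncated ellipsoid $E_m$ as a single body:
\begin{itemize}
\item It takes a maximal $\eps/2$-separated subset of $E_m$ and notes that the disjoint balls of radius $r=\eps/4$ lie in $(1+r/a_m)E_m$, since $a_m=Rm^{-\alpha}$ is the smallest semiaxis.
\item Comparing volumes gives $N_m\le(1+r/a_m)^m\prod_{j\le m}(a_j/r)$.
\item Stirling's bound $\log m!\ge m\log m-m$ then gives $\sum_{j\le m}\log(a_j/r)\le Cm$.
\end{itemize}
So the paper gets the logarithm-free bound from the product of semiaxes in the ellipsoid's volume.

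You instead use dyadic blocking with a geometrically decaying error budget $\eps_\ell^2\propto\eps^2 2^{-(L-\ell)\beta}$. This makes block $\ell$ cost $O(2^\ell(1+L-\ell))$, so the total is $O(2^L)$.

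What each approach buys:
\begin{itemize}
\item Your version needs only the elementary covering bound for Euclidean balls, and it adapts easily to other weight sequences. The cost is the extra bookkeeping for the block radii.
\item The paper's version is a single volume computation, and its net has centers inside the ellipsoid. Your product of block centers may lie outside it.
\end{itemize}
That difference is harmless here. The covering number is used only to build tests in Lemma~\ref{lem:tests}, which chooses its own points inside the class.

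Your transfer to the density class through the global inequality of Proposition~\ref{prop:geometry} matches the paper.
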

\begin{proof}
Set $m=\lceil(4R/\eps)^{1/\alpha}\rceil$. By \eqref{eq:tail}, truncation after $m$ coefficients incurs $\ell_2$ error at most $\eps/4$. The projected ellipsoid $E_m\subset\R^m$ has semiaxes $a_j=Rj^{-\alpha}$. Choose a maximal set of points in $E_m$ separated by more than $\eps/2$. The Euclidean balls of radius $r=\eps/4$ about these points have disjoint interiors and lie in
\[
 E_m+rB_2^m\subseteq(1+r/a_m)E_m,
\]
where $B_2^m$ is the Euclidean unit ball. If $N_m$ is the number of selected points, volume comparison gives
\[
 N_m\le(1+r/a_m)^m\prod_{j=1}^m(a_j/r).
\]
The ratio $r/a_m$ is bounded uniformly over the stated ranges. Also, $\log(m!)\ge m\log m-m$ implies
\[
 \sum_{j=1}^m\log(a_j/r)
 \le m\log\{R/(rm^\alpha)\}+\alpha m\le Cm.
\]
Hence $\log N_m\le Cm$. Maximality makes the selected points an $\eps/2$-net of $E_m$; adding the truncation error gives an $\eps$-net of the infinite ellipsoid. Since $m\lesssim(R/\eps)^{1/\alpha}$ uniformly, \eqref{eq:entropy} follows. Proposition~\ref{prop:geometry} maps a coefficient net of radius $\eps/\sqrt{\overline g}$ to a density net of radius $\eps$, proving the final assertion.
\end{proof}

\begin{lemma}[Local likelihood approximation]\label{lem:localKL}
Suppose Assumption~\ref{ass:kernel} holds, $\theta_0\in\Theta_\alpha(R_0)$, $h_{\theta_0}\ge b_0>0$, and $\theta\in\Theta_\alpha(R_*)$. There are $r_0,C,L>0$, uniform for $\alpha\ge\alpha_->\rho/2$ and $g_0\le\overline g$, such that $\|\theta-\theta_0\|_2\le r_0$ implies
\begin{equation}\label{eq:localKL}
 K_{G_0}(p_0,p_\theta)+V_{G_0}(p_0,p_\theta)
 \le C\|\theta-\theta_0\|_2^2,
 \qquad\|\log(p_0/p_\theta)\|_\infty\le L.
\end{equation}
These constants may depend on $C_K,\rho,\alpha_-,R_0,R_*,b_0,\overline g$. For any two representations with outcome integral one and values in a fixed interval $[b,B]$, $b>0$, the same bounds hold with squared $L_2(\mu)$ distance between the representations and constants depending only on $b,B,\overline g$.
\end{lemma}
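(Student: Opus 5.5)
The plan is to reduce everything to pointwise control of the representations, and then use standard bounds for log ratios of densities that are bounded above and below. Put $\delta=\theta-\theta_0$. Both $\theta_0$ and $\theta$ lie in $\Theta_\alpha(R_*)$, so $\|\delta\|_{2,\alpha}\le R_0+R_*$. The interpolation inequality \eqref{eq:interpolation} of Lemma~\ref{lem:interpolation} then gives
\[
 \|h_\theta-h_{\theta_0}\|_\infty\le C\|\delta\|_2^{1-\rho/(2\alpha)}(R_0+R_*)^{\rho/(2\alpha)} .
\]
Since $\alpha\ge\alpha_->\rho/2$, the exponent $1-\rho/(2\alpha)\ge1-\rho/(2\alpha_-)>0$ is bounded away from zero uniformly in $\alpha$. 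We can therefore choose $r_0$, uniform in $\alpha$, so that $\|\delta\|_2\le r_0$ forces $\|h_\theta-h_{\theta_0}\|_\infty\le b_0/2$. Lemma~\ref{lem:interpolation} also gives $\|h_{\theta_0}\|_\infty\le 1+CR_0$. Hence, with $b=b_0/2$ and $B=1+CR_0+b_0/2$, both $h_{\theta_0}$ and $h_\theta$ take values in $[b,B]$. This reduces the first claim to the second, general claim, applied with $\|h_\theta-h_{\theta_0}\|_2=\|\delta\|_2$ by orthonormality.

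For the general claim, let $h,k$ have outcome integral one and values in $[b,B]$. Then $Q_h,Q_k\in[1,B^2]$, by Cauchy--Schwarz for the lower bound and $h\le B$ for the upper bound. Consequently $p_h,p_k\in[b^2/B^2,B^2]$. This gives the sup bound $\|\log(p_h/p_k)\|_\infty\le L:=2\log(B^2/b)$.

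It remains to bound $K$ and $V$ by $\|h-k\|_2^2$. The first step is to invoke the standard inequality for densities with bounded log ratio: if $\|\log(p/q)\|_\infty\le L$, then
\[
 K(p,q)+V(p,q)\le C_L\int(\sqrt p-\sqrt q)^2\,d\mu_y ,
\]
applied for each $x$ and then integrated against $G_0$. I would derive it from $|\log u|\le C_L|\sqrt u-1|$ on $[e^{-L},e^{L}]$, which handles $V$ directly. For $K$, use the elementary bound $u\log u-u+1\le C_L(\sqrt u-1)^2$ together with $\int(p-q)\,d\mu_y=0$. The second step is Proposition~\ref{prop:geometry}, \eqref{eq:global-H}, which gives
\[
 d_{G_0}^2(p_h,p_k)\le\overline g\,\|h-k\|_2^2 .
\]
Combining the two steps yields $K_{G_0}+V_{G_0}\le C_L\overline g\,\|h-k\|_2^2$, with constants depending only on $b,B,\overline g$.

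The main obstacle is the uniformity in the first step. The interpolation exponent degenerates as $\alpha\downarrow\rho/2$, which is exactly why the hypothesis $\alpha\ge\alpha_->\rho/2$ is needed. The constant in \eqref{eq:interpolation} is already uniform by Lemma~\ref{lem:interpolation}. If $r_0<1$ and $\|\delta\|_2\le r_0$, then $\|\delta\|_2^{1-\rho/(2\alpha)}\le r_0^{1-\rho/(2\alpha_-)}$, so $r_0$ can be fixed once for the whole range of $\alpha$. The elementary log inequalities need only the uniform bound $L$, so no further uniformity issues arise.
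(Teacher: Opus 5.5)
Your proposal is correct and follows essentially the paper's argument. The steps match: the interpolation inequality gives $\|h_\theta-h_{\theta_0}\|_\infty\le b_0/2$, which puts both representations in a common interval $[b_0/2,B]$ with normalizers in $[1,B^2]$. On that fixed compact range, the ratios of $t\log t-t+1$ (equivalently $-\log t+t-1$) and $(\log t)^2$ to $(\sqrt t-1)^2$ are bounded, and Proposition~\ref{prop:geometry} converts squared Hellinger distance into squared coefficient distance.

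The only cosmetic difference is in how uniformity over $\alpha$ is obtained. The paper applies interpolation at $\alpha_-$, using $\|\delta\|_{2,\alpha_-}\le\|\delta\|_{2,\alpha}$, so the exponent is fixed. You apply it at $\alpha$ and use $r_0<1$ together with the factor bound $(R_0+R_*)^{\rho/(2\alpha)}\le\max\{1,R_0+R_*\}$; this works equally well.
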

\begin{proof}
Since $\|\theta-\theta_0\|_{2,\alpha_-}\le R_*+R_0$, Lemma~\ref{lem:interpolation} at $\alpha_-$ gives $\|h_\theta-h_0\|_\infty\le b_0/2$ whenever $\|\theta-\theta_0\|_2\le r_0$, for a sufficiently small uniform $r_0$. Both representations then take values in a common interval $[b_0/2,B]$. Their normalizers lie in $[1,B^2]$, so $t=p_\theta/p_0$ lies in a fixed compact subinterval of $(0,\infty)$.

On this interval, the functions
\[
 \frac{-\log t+t-1}{(\sqrt t-1)^2},
 \qquad
 \frac{(\log t)^2}{(\sqrt t-1)^2}
\]
are bounded after assigning their continuous values at $t=1$. Moreover, normalization gives $\int p_0(t-1)\,d\mu_y\,dG_0=0$. Multiplying the two inequalities by $p_0$ and integrating therefore yields
\[
 K_{G_0}(p_0,p_\theta)+V_{G_0}(p_0,p_\theta)
 \le C d_{G_0}^2(p_0,p_\theta).
\]
Proposition~\ref{prop:geometry} bounds the right side by squared coefficient error. The fixed range of $t$ also bounds the absolute log ratio. For arbitrary representations in $[b,B]$, the same ratio calculation and Proposition~\ref{prop:geometry} apply directly, without a coefficient restriction.
\end{proof}

\section{Probability inequalities and prior concentration}\label{app:prob}
Fix $p_0$ and $G_0$. Write $P_p=P_{p,G_0}$ and $P_0=P_{p_0,G_0}$, with product distributions $P_p^n$ and $P_0^n$ for the sample. Expectations $\E_p$ and $\E_0$ refer to one observation or the sample as specified by their arguments. Define the likelihood ratio and its prior integral by
\[
 R_n(p)=\prod_{i=1}^n\frac{p(Y_i\mid X_i)}{p_0(Y_i\mid X_i)},
 \qquad D_n=\int R_n(p)\,\Pi_n(dp).
\]
The denominators $p_0(Y_i\mid X_i)$ are positive and finite $P_0^n$-almost surely. Prior integrals can equivalently be taken over the parameters that determine $p$.

For one observation,
\[
 \E_0\frac{p(Y\mid X)}{p_0(Y\mid X)}
 =\int_{\{p_0>0\}}p\,d\mu_y\,dG_0\le1.
\]
Independence gives $\E_0R_n(p)\le1$, with equality if $p_0>0$ almost everywhere under $\mu_y\otimes G_0$. Tonelli then gives $\E_0D_n\le1$, so $D_n<\infty$ almost surely. Every prior used below assigns positive probability to the reference density $p=1$, whose likelihood ratio is strictly positive almost surely. Hence $D_n>0$ almost surely, and Bayes' formula defines the posterior under each truth considered here, including truths that vanish.

\Needspace{10\baselineskip}
\subsection{Bernstein's inequality}
\begin{lemma}[Bernstein bound]\label{lem:bernstein}
Let $W_i$ be finitely many independent centered variables satisfying $W_i\le b$ almost surely and $\sum_i \E W_i^2\le v$, where $b,v\ge0$. For $u>0$ and $v+bu/3>0$,
\begin{equation}\label{eq:bernstein}
 \Prob\left(\sum_iW_i\ge u\right)
 \le\exp\left\{-\frac{u^2}{2(v+bu/3)}\right\}.
\end{equation}
If $v=0$, the probability is zero. Applying the inequality to $-W_i$ gives the lower-tail bound when $W_i\ge-b$.
\end{lemma}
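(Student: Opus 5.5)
The plan is the standard Chernoff (exponential Markov) argument applied to $S=\sum_iW_i$. The degenerate case comes first. If $v=0$, then $\E W_i^2=0$ for every $i$, so each $W_i=0$ almost surely. Hence $S=0$ almost surely and $\Prob(S\ge u)=0$ for $u>0$. We may therefore assume $v>0$, or more generally $v+bu/3>0$ with $v>0$.

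For $v>0$, fix $\lambda>0$. Markov's inequality and independence give
\[
 \Prob(S\ge u)\le e^{-\lambda u}\prod_i\E e^{\lambda W_i}.
\]
The key pointwise bound uses the function $\varphi(t)=(e^t-1-t)/t^2$, extended by $\varphi(0)=1/2$, which is nondecreasing on $\R$. Since $W_i\le b$ and $\lambda>0$, we get $e^{\lambda W_i}\le1+\lambda W_i+\lambda^2W_i^2\varphi(\lambda b)$. Taking expectations and using $\E W_i=0$ gives
\[
 \E e^{\lambda W_i}\le1+\lambda^2\E W_i^2\,\varphi(\lambda b)\le\exp\{\lambda^2\E W_i^2\,\varphi(\lambda b)\}.
\]
Next we use the elementary estimate $\varphi(t)\le \tfrac12(1-t/3)^{-1}$ for $0\le t<3$. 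It follows by comparing power series, since $k!\ge2\cdot3^{k-2}$. For $b=0$ the monotonicity gives $\varphi(0)=1/2$ directly. The product bound then becomes
\[
 \Prob(S\ge u)\le\exp\Bigl\{-\lambda u+\frac{\lambda^2v}{2(1-\lambda b/3)}\Bigr\},\qquad 0<\lambda<3/b.
\]

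Finally, choose $\lambda=u/(v+bu/3)$. This satisfies $\lambda b/3<1$ because $v>0$, and $1-\lambda b/3=v/(v+bu/3)$. Substituting, the exponent equals
\[
 -\frac{u^2}{v+bu/3}+\frac{u^2}{2(v+bu/3)}=-\frac{u^2}{2(v+bu/3)},
\]
which is \eqref{eq:bernstein}. The lower-tail statement follows by applying the result to $-W_i$, which are centered, have the same second moments, and satisfy $-W_i\le b$ when $W_i\ge-b$.

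The only step needing care is the monotonicity of $\varphi$ and the estimate $\varphi(t)\le\frac12(1-t/3)^{-1}$. The estimate comes from the series $\varphi(t)=\sum_{k\ge2}t^{k-2}/k!$. Monotonicity for negative $t$ follows from the integral representation $\varphi(t)=\int_0^1(1-s)e^{ts}\,ds$, which is increasing in $t$.
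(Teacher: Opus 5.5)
Your proof is correct and follows essentially the same route as the paper: exponential Markov, the per-variable bound $\E e^{tW_i}\le\exp\{t^2\E W_i^2/[2(1-tb/3)]\}$ from $k!\ge2\cdot3^{k-2}$, the choice $t=u/(v+bu/3)$, and a separate treatment of $v=0$. The only difference is cosmetic: you get the pointwise bound on $e^{x}$ for $x\le tb$ from monotonicity of $\varphi(x)=(e^x-1-x)/x^2$, whereas the paper splits into $x\le0$, where it uses $e^x\le1+x+x^2/2$, and $0\le x\le a$, where it uses the power series.
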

\begin{proof}
For $x\le a<3$ with $a\ge0$,
\[
 e^x\le1+x+\frac{x^2}{2(1-a/3)}.
\]
For $x\le0$, this follows from $e^x\le1+x+x^2/2$. For $0\le x\le a$, expand the exponential and use $k!\ge2\cdot3^{k-2}$ for $k\ge2$. Thus, for $t\ge0$ with $tb<3$, centering and $1+z\le e^z$ imply
\[
 \E e^{tW_i}
 \le\exp\left\{\frac{t^2\E W_i^2}{2(1-tb/3)}\right\}.
\]
Independence bounds the moment generating function of the sum by $\exp\{t^2v/[2(1-tb/3)]\}$. If $v>0$, exponential Markov inequality with $t=u/(v+bu/3)$ proves \eqref{eq:bernstein}. This choice satisfies $tb<3$. If $v=0$, each $W_i=0$ almost surely.
\end{proof}

\subsection{Tests for conditional densities}
\begin{lemma}[Tests against neighborhoods of conditional densities]\label{lem:tests}
There is a numerical constant $c_T>0$ with the following property. For conditional densities $p_0,p_1$ with $D=d_{G_0}(p_0,p_1)>0$, there is a test $\varphi_{p_1}$, depending only on $p_0,p_1$ and the observations, such that
\begin{equation}\label{eq:robust-test}
 P_0^n\varphi_{p_1}\le e^{-c_TnD^2},\qquad
 \sup_{p:d_{G_0}(p,p_1)\le D/32}P_p^n(1-\varphi_{p_1})
 \le e^{-c_TnD^2}.
\end{equation}
Here $P\varphi$ denotes the expectation of the test under $P$. If $\mathcal F$ can be covered by $N$ balls of radius $r/64$ in $d_{G_0}$, there is a test $\varphi$ satisfying, after a numerical reduction of $c_T$,
\begin{equation}\label{eq:global-test}
 P_0^n\varphi\le N e^{-c_Tnr^2},\qquad
 \sup_{p\in\mathcal F:d_{G_0}(p,p_0)>r}P_p^n(1-\varphi)
 \le e^{-c_Tnr^2}.
\end{equation}
\end{lemma}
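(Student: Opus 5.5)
The plan is to build the point-versus-ball test from the classical Le Cam–Birgé construction, adapted to conditional densities with a fixed covariate law $G_0$, and then obtain the global test by taking a maximum over a net.

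\textbf{Step 1: the basic test.} Work with the joint densities $f_p(y,x)=p(y\mid x)g_0(x)$ relative to $\mu$. For these,
\[
 \int(\sqrt{f_p}-\sqrt{f_q})^2\,d\mu=d_{G_0}^2(p,q),
\]
so the problem reduces to ordinary Hellinger testing of iid observations from $P_{p,G_0}$. The statistic will be the log of a geometric-mean likelihood ratio,
\[
 \varphi_{p_1}=\ind\Bigl\{\textstyle\sum_i\log\bigl(\sqrt{p_1/p_0}\bigr)(Z_i)>0\Bigr\}.
\]
Conditional densities suffice here because the $g_0$ factors cancel, so the test depends only on $p_0$, $p_1$ and the data.

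\textbf{Step 2: type I error.} Markov's inequality with exponent $1$ bounds the type I error by the $n$-th power of the affinity:
\[
 \Bigl(\int\sqrt{f_0f_1}\,d\mu\Bigr)^n=(1-D^2/2)^n\le e^{-nD^2/2}.
\]

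\textbf{Step 3: type II error over the ball.} For $p$ with $d_{G_0}(p,p_1)\le D/32$, Markov's inequality gives
\[
 P_p^n(1-\varphi)\le\Bigl(\int f_p\sqrt{f_0/f_1}\,d\mu\Bigr)^n.
\]
This integral can be infinite, because $\sqrt{f_0/f_1}$ is unbounded. The standard fix, which I will adopt, is to replace $p_1$ in the statistic by the mixture $\bar p=(p_0+p_1)/2$. The statistic then uses $\sqrt{\bar p/p_0}$, whose reciprocal $\sqrt{p_0/\bar p}\le\sqrt2$ is bounded. Following Le Cam's argument (as in \citet{GGV2000}, Lemma~2 via Le Cam–Birgé),
\[
 \int f_p\sqrt{f_0/\bar f}\,d\mu\le 1-cD^2
 \quad\text{whenever } d(p,p_1)\le D/32 .
\]
To show this, write $f_p=\bar f+(f_p-\bar f)$ and split the integral:
\begin{itemize}
 \item the main term is $\int\sqrt{f_0\bar f}$, which is at most $1-cD^2$ because $d(p_0,\bar p)\gtrsim D$;
 \item the remainder is bounded by Cauchy–Schwarz using boundedness of $\sqrt{f_0/\bar f}$ and $(\sqrt{f_p}-\sqrt{\bar f})^2$-type terms, with the small radius $D/32$ making it at most a fraction of $D^2$.
\end{itemize}
Raising to the $n$-th power gives $e^{-cnD^2}$. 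This perturbation estimate is the main obstacle. I expect the constant bookkeeping (the factor $32$) to require care, bounding $\int(f_p-\bar f)\sqrt{f_0/\bar f}$ by $\sqrt2\cdot d(p,\bar p)\cdot(\text{bounded }L_2\text{ norm})$ plus a centered part. The type I bound in Step 2 is then redone with $\bar p$, and still gives $e^{-cnD^2}$.

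\textbf{Step 4: global test.} Let $p_1,\dots,p_N$ be the centers of the cover of $\mathcal F$ by balls of radius $r/64$. Keep only the centers whose balls meet $\{p\in\mathcal F: d(p,p_0)>r\}$; for each such center, $D_k=d(p_k,p_0)>r-r/64$. Any $p$ in that ball satisfies
\[
 d(p,p_k)\le r/64\le D_k/32 ,
\]
so Step 3 applies to it. Set $\varphi=\max_k\varphi_{p_k}$. A union bound gives type I error at most $Ne^{-c_Tn(63r/64)^2}$, and each alternative lies in some retained ball, giving type II error at most $e^{-c_Tn(63r/64)^2}$. Absorbing the factor $(63/64)^2$ into $c_T$ yields \eqref{eq:global-test}.
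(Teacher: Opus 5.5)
Your Step~3 has a genuine gap. The route itself differs from the paper's, which uses the bounded score $T=(\sqrt{p_1}-\sqrt{p_0})/(\sqrt{p_1}+\sqrt{p_0})$ with Bernstein's inequality, while you use a Chernoff bound on the mixture likelihood ratio $\sqrt{\bar p/p_0}$. The difficulty is that the decisive inequality $\int f_p\sqrt{f_0/\bar f}\,d\mu\le1-cD^2$ does not follow from the argument you sketch. Put $w=\sqrt{f_0/\bar f}\le\sqrt2$.

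The remainder $\int(f_p-\bar f)w\,d\mu$ is not a small error term. The radius $D/32$ controls $d_{G_0}(p,p_1)$, not $d_{G_0}(p,\bar p)$, and $d_{G_0}(p_1,\bar p)$ is itself of order $D$. A bound of the form $\sqrt2\,d_{G_0}(p,\bar p)\times O(1)$ is therefore of order $D$, which cannot be absorbed into a gap of order $D^2$ as $D\to0$. Centering $w$ at one does not rescue the decomposition around $\bar f$. Cauchy--Schwarz then gives $d_{G_0}^2(p,\bar p)+2d_{G_0}(p,\bar p)\,d_{G_0}(p_0,\bar p)$, which is about $3D^2/4$ when $f_0\approx f_1$, whereas the gap $d_{G_0}^2(p_0,\bar p)/2$ is only about $D^2/8$. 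In fact, at $p=p_1$ the remainder equals $\frac12\int(f_1-f_0)(w-1)\,d\mu\approx-D^2/2$ when $f_0\approx f_1$. It is the dominant \emph{negative} contribution and has to be exploited, not bounded in absolute value.

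The inequality is nevertheless true, and your test can be repaired by comparing with $p_1$ rather than $\bar p$. First, for $a>0$, $b\ge0$ and $m=(a+b)/2$,
$b\sqrt{a/m}\le a+\frac34(b-a)-\frac14(\sqrt b-\sqrt a)^2$.
With $u^2=b/a$ this is equivalent to $(u-1)^2(u^2+4u+1)\ge0$. Taking $a=f_0$, $b=f_1$ and integrating gives $\int f_1w\,d\mu\le1-D^2/4$.

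Second, use $\int(f_p-f_1)\,d\mu=0$ to write $\int(f_p-f_1)w\,d\mu=\int(f_p-f_1)(w-1)\,d\mu$, and split $f_p-f_1=(\sqrt{f_p}-\sqrt{f_1})^2+2\sqrt{f_1}(\sqrt{f_p}-\sqrt{f_1})$. The bounds $|w-1|\le1$, $\sqrt{f_1}\,|w-1|\le\sqrt2\,|\sqrt{f_0}-\sqrt{\bar f}|$, and $d_{G_0}(p_0,\bar p)\le D$ give a perturbation of at most $\delta^2+2\sqrt2\,\delta D\le0.09D^2$ for $\delta=d_{G_0}(p,p_1)\le D/32$. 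Hence $\int f_pw\,d\mu\le1-0.16D^2$.

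The sharp constant in the first step matters. The pointwise sign $(f_1-f_0)(w-1)\le0$ alone only yields $1-d_{G_0}^2(p_0,\bar p)/2$, and $d_{G_0}^2(p_0,\bar p)$ can be as small as about $0.086D^2$. That gap does not beat the perturbation bound at radius $D/32$.

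The paper's bounded score avoids the mixture and these constants altogether. Its perturbation bound $\delta(\delta+2D)$ is quadratic for the same reason as your repaired bound, namely $\|\sqrt{f_1}\,T\|_2\le D$. Your Steps~2 and~4 are fine, provided the ball centers in Step~4 are conditional densities; the paper instead picks a point of $\mathcal F$ inside each relevant ball.
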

\begin{proof}
For $z=(y,x)$, abbreviate $p_j(z)=p_j(y\mid x)$ and define
\[
 T(z)=\frac{\sqrt{p_1(z)}-\sqrt{p_0(z)}}
            {\sqrt{p_1(z)}+\sqrt{p_0(z)}},
 \qquad
 \varphi_{p_1}=\ind\left\{\sum_{i=1}^nT(Z_i)>0\right\}.
\]
Set $T=0$ if both densities vanish. The score satisfies $|T|\le1$ and does not use $g_0$.

To calculate its moments, write $a=\sqrt{p_0g_0}$, $b=\sqrt{p_1g_0}$, $v=b-a$, and $s=a+b$. These are functions of $(y,x)$, integrated below with respect to $\mu$. Normalization gives $\int vs\,d\mu=\int(b^2-a^2)\,d\mu=0$, and $\int v^2\,d\mu=D^2$. Since $|v|\le s$,
\[
 \E_0T=-\frac{D^2}{2}+\frac14\int\frac{v^3}{s}\,d\mu
 \le-\frac{D^2}{4},
 \qquad
 \E_1T=\frac{D^2}{2}+\frac14\int\frac{v^3}{s}\,d\mu
 \ge\frac{D^2}{4},
\]
where $v^3/s=0$ when $s=0$. Also $\E_0T^2,\E_1T^2\le D^2$.

Now let $d_{G_0}(p,p_1)\le\delta=D/32$ and put $c=\sqrt{pg_0}$. With all norms in the next calculation taken in $L_2(\mu)$, Cauchy--Schwarz gives
\[
 |\E_pT-\E_1T|
 \le\|c-b\|_2\{\|(c-b)T\|_2+2\|bT\|_2\}
 \le\delta(\delta+2D).
\]
It follows that $\E_pT\ge D^2/8$. Moreover, $\E_pT^2=\|cT\|_2^2\le(D+\delta)^2\le2D^2$. The centered score is bounded in absolute value by two. Apply Lemma~\ref{lem:bernstein} to its upper tail under $P_0^n$ and its lower tail under $P_p^n$. This proves \eqref{eq:robust-test}, for example with $c_T=1/512$.

For each covering ball that meets $\{p\in\mathcal F:d_{G_0}(p,p_0)>r\}$, choose a point $p_j$ in that intersection. There are at most $N$ chosen points. Every alternative in the displayed set lies within $r/32$ of one of them, and $d_{G_0}(p_j,p_0)>r$. Take the maximum of the corresponding tests. The union bound controls its type-I error by $Ne^{-c_Tnr^2}$; the test for the point nearest a given alternative controls its type-II error by $e^{-c_Tnr^2}$. Although the covering uses $d_{G_0}$, each pairwise score remains independent of $g_0$.
\end{proof}

\subsection{The posterior denominator}
\begin{lemma}[A lower bound for the posterior denominator]\label{lem:denominator}
Let $B$ be prior measurable, with $\Pi_n(B)=b_n>0$. Suppose that, for every $p\in B$,
\[
 K_{G_0}(p_0,p)\le C_K'\eps^2,\qquad
 V_{G_0}(p_0,p)\le C_V\eps^2,\qquad
 \log(p_0/p)\le L\quad P_0\text{-almost surely},
\]
where $C_K',C_V,L\ge0$. Then, for $u>0$ with $C_V+Lu/3>0$,
\begin{equation}\label{eq:denominator}
 P_0^n\{D_n<b_ne^{-(C_K'+u)n\eps^2}\}
 \le\exp\left\{-\frac{u^2n\eps^2}{2(C_V+Lu/3)}\right\}.
\end{equation}
If $L=0$, then $D_n\ge b_n$ almost surely. The result also applies when $B$ is a set of prior parameters satisfying these density conditions.
\end{lemma}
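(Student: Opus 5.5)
The plan is the standard Bernstein argument for the log-likelihood ratio, applied to a prior average over $B$ and not to a single density. Let $\bar\Pi_B=\Pi_n(\cdot\cap B)/b_n$ be the normalized restriction. Since $R_n(p)\ge0$, we have
\[
 D_n\ge\int_B R_n(p)\,\Pi_n(dp)=b_n\int R_n(p)\,\bar\Pi_B(dp).
\]
Jensen's inequality for the logarithm then gives
\[
 \log(D_n/b_n)\ge\int\log R_n(p)\,\bar\Pi_B(dp)=-\sum_{i=1}^n\ell(Z_i),
 \qquad \ell(z)=\int\log\frac{p_0(y\mid x)}{p(y\mid x)}\,\bar\Pi_B(dp).
\]
Measurability of $\ell$ follows from joint measurability of the finite expansions, as in Appendix~\ref{app:versions}. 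The variables $\ell(Z_i)$ are i.i.d. under $P_0$. Fubini and the assumed bounds give $\E_0\ell\le C_K'\eps^2$. Jensen's inequality applied to the square gives
\[
 \E_0\ell^2\le\int V_{G_0}(p_0,p)\,\bar\Pi_B(dp)\le C_V\eps^2.
\]
Finally, $\ell\le L$ holds $P_0$-almost surely. To pass this bound through the average, note that the assumption holds for each $p$ almost surely, and apply Fubini to the exceptional set.

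Next, center the terms by setting $W_i=\ell(Z_i)-\E_0\ell$. The event $D_n<b_ne^{-(C_K'+u)n\eps^2}$ implies $\sum_i\ell(Z_i)>(C_K'+u)n\eps^2$. Because $\E_0\ell\le C_K'\eps^2$, this in turn implies $\sum_iW_i>un\eps^2$. For the Bernstein bound, Lemma~\ref{lem:bernstein}, I need an upper bound on $W_i$ and a variance bound.

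The variance is easy: $\E W_i^2\le\E_0\ell^2\le C_V\eps^2$, so I take $v=nC_V\eps^2$.

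The upper bound on $W_i$ is the one delicate step. We know $\ell\le L$, but $-\E_0\ell$ could a priori be positive. However, $\E_0\ell$ is an average of Kullback--Leibler divergences, each of which is nonnegative when $p$ integrates to at most one. Hence $\E_0\ell\ge0$, and $W_i\le L$, so I take $b=L$. Substituting $u\mapsto un\eps^2$ in \eqref{eq:bernstein} gives the exponent
\[
 \frac{u^2n^2\eps^4}{2(nC_V\eps^2+Lun\eps^2/3)}=\frac{u^2n\eps^2}{2(C_V+Lu/3)},
\]
which is \eqref{eq:denominator}. The condition $C_V+Lu/3>0$ is exactly the condition under which Lemma~\ref{lem:bernstein} applies.

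It remains to treat the degenerate case $L=0$. Then $\log(p_0/p)\le0$ almost surely, so $p\ge p_0$ almost surely. Since both integrate to one over $\mu_y$, this forces $p=p_0$ $P_0$-almost surely. Therefore $R_n(p)=1$ almost surely for every $p\in B$, and $D_n\ge b_n$ almost surely.

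I expect the main obstacle to be the bookkeeping rather than any estimate: justifying the Jensen step when $D_n$ involves densities that may vanish, where $\log R_n$ can equal $-\infty$, and handling the almost-sure bound $\ell\le L$ uniformly over $B$. Both are settled by Fubini, using the almost-sure finiteness and positivity facts stated at the start of Appendix~\ref{app:prob}. For parameter sets $B$, the same argument runs verbatim with integrals taken over parameters.
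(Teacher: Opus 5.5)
Your proposal is correct and follows the paper's proof essentially step for step: you average the log-ratio over the normalized restriction of the prior to $B$, use Jensen to get $D_n\ge b_n\exp\{-\sum_i\ell(Z_i)\}$, use nonnegativity of Kullback--Leibler divergence to obtain $\E_0\ell\ge0$ and hence $W_i\le L$, and then apply Lemma~\ref{lem:bernstein} with $v=nC_V\eps^2$ and $b=L$. The only variation is the case $L=0$. There you show $p=p_0$ almost everywhere for each $p\in B$, which needs one more Fubini step to hold for $\bar\Pi_B$-almost every $p$ on almost every sample; the paper instead notes that $U_i\le0$ and $\E_0U_i\ge0$ force $U_i=0$ almost surely.
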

\begin{proof}
Let $\Pi_B$ be the prior conditional on $B$ and define
\[
 U_i=\int_B\log\{p_0(Z_i)/p(Z_i)\}\,\Pi_B(dp).
\]
The second-moment bound justifies Fubini and Jensen and yields
\[
 0\le\E_0U_i\le C_K'\eps^2,\qquad
 \E_0U_i^2\le C_V\eps^2,\qquad
 U_i-\E_0U_i\le L.
\]
The first inequality uses nonnegativity of Kullback--Leibler divergence. Jensen's inequality applied to the likelihood integral gives $D_n\ge b_n\exp(-\sum_iU_i)$. Consequently, the event in \eqref{eq:denominator} requires $\sum_i(U_i-\E_0U_i)>un\eps^2$. The variables $U_i$ are independent, and their total variance is at most $nC_V\eps^2$. Lemma~\ref{lem:bernstein} gives the asserted probability bound. If $L=0$, then $U_i\le0$ and $\E_0U_i\ge0$, so $U_i=0$ almost surely and Jensen gives $D_n\ge b_n$.
\end{proof}

The proofs below combine denominator control with a test through one inequality. Suppose $D_n\ge d_n>0$ on an event $\mathcal E$. For a prior-measurable set $B$ and a test $0\le\varphi\le1$,
\begin{equation}\label{eq:posterior-decomposition}
 \E_0\Pi_n(B\mid\cD_n)
 \le P_0^n(\mathcal E^c)+P_0^n\varphi
       +d_n^{-1}\int_B P_p^n(1-\varphi)\,\Pi_n(dp).
\end{equation}
Indeed, split the expectation over $\mathcal E$ and $\mathcal E^c$. On $\mathcal E$, bound the contribution multiplied by $\varphi$ by $\varphi$ and divide the remaining numerator by $d_n$. Tonelli and
\[
 \E_0\{(1-\varphi)R_n(p)\}\le P_p^n(1-\varphi)
\]
then give \eqref{eq:posterior-decomposition}. The inequality, rather than equality, permits $p_0$ to vanish on a set to which $p$ assigns positive probability.

\subsection{Prior probability of approximating coefficients}
\begin{lemma}[Prior probability under smoothness-matched Gaussian coefficients]\label{lem:smallball}
Fix $0<\alpha_-<\alpha_+<\infty$, $0<R_0<R_*$, and $r\in(0,R_*-R_0)$. For $\alpha\in[\alpha_-,\alpha_+]$, $J\ge1$, and $\theta_0\in\Theta_\alpha(R_0)$, let
\[
 B_{\alpha,J}=\{\theta\in\R^J:
        \|\theta-\theta_0^{(J)}\|_2\le rJ^{-\alpha}\}.
\]
There is a constant $C$, depending only on the fixed bounds, $r$, and $\tau$, such that, uniformly in $\alpha,J,\theta_0$,
\begin{equation}\label{eq:smallball}
 B_{\alpha,J}\subset\Theta_{\alpha,J}(R_*),\qquad
 \Pi_{\alpha,J}(B_{\alpha,J})\ge e^{-CJ}.
\end{equation}
\end{lemma}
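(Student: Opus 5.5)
The plan is to prove the two assertions of \eqref{eq:smallball} separately: first the containment by the triangle inequality in the weighted norm, then the probability bound by a direct Gaussian density computation on the ball.

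\emph{Containment.} Let $\theta\in B_{\alpha,J}$ and set $\delta=\theta-\theta_0^{(J)}$. Since all indices satisfy $j\le J$,
\[
 \|\delta\|_{2,\alpha}\le J^\alpha\|\delta\|_2\le r.
\]
The truncation $\theta_0^{(J)}$ has weighted norm at most $R_0$. Hence $\|\theta\|_{2,\alpha}\le R_0+r<R_*$, which gives $B_{\alpha,J}\subset\Theta_{\alpha,J}(R_*)$.

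\emph{Reduction to the unconditioned prior.} Because the ball lies inside the conditioning set,
\[
 \Pi_{\alpha,J}(B_{\alpha,J})=\widetilde\Pi_{\alpha,J}(B_{\alpha,J})/\widetilde\Pi_{\alpha,J}(\Theta_{\alpha,J}(R_*))\ge\widetilde\Pi_{\alpha,J}(B_{\alpha,J}).
\]
It therefore suffices to bound the unconditioned Gaussian probability of the ball below by $e^{-CJ}$.

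\emph{Gaussian lower bound.} Bound the Gaussian density below on the ball and multiply by the ball's volume. The log density at $\theta$ is
\[
 -\sum_{j\le J}\Bigl\{\tfrac12\log(2\pi\tau^2)-\tfrac{2\alpha+1}{2}\log j+\frac{j^{2\alpha+1}\theta_j^2}{2\tau^2}\Bigr\}.
\]
\begin{itemize}
 \item The quadratic term satisfies $j^{2\alpha+1}\theta_j^2\le Jj^{2\alpha}\theta_j^2$. Summing gives at most $J\|\theta\|_{2,\alpha}^2\le JR_*^2$, which contributes $-JR_*^2/(2\tau^2)$.
 \item The normalizing term contributes $-\tfrac J2\log(2\pi\tau^2)$.
 \item The term $+\tfrac{2\alpha+1}{2}\log J!$ is nonnegative and helpful; it is the one I will use against the volume.
\end{itemize}
The volume of a Euclidean ball of radius $rJ^{-\alpha}$ in $\R^J$ is
\[
 \pi^{J/2}(rJ^{-\alpha})^J/\Gamma(J/2+1).
\]
Its logarithm is $\ge -\alpha J\log J-\tfrac J2\log J-CJ$, using $\log\Gamma(J/2+1)\le\tfrac J2\log J$.

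\emph{Main obstacle: cancelling the $J\log J$ terms.} The volume loses $(\alpha+\tfrac12)J\log J$. The density gains $\tfrac{2\alpha+1}{2}\log J!\ge(\alpha+\tfrac12)(J\log J-J)$ by Stirling's inequality. These two leading terms cancel exactly, which is precisely why the variances $j^{-2\alpha-1}$ are matched to the radius $J^{-\alpha}$. What remains is $\ge -CJ$, with $C$ depending only on $\alpha_+,r,R_*,\tau$, so the bound is uniform in $\alpha,J,\theta_0$. The only care needed is to track Stirling's constants uniformly for $J\ge1$, including small $J$, where the inequalities $\log J!\ge J\log J-J$ and $\Gamma(J/2+1)\le (J/2+1)^{J/2}$ still hold and the resulting constants remain absorbed into $CJ$.
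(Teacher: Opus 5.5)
Your proof is correct and follows the paper's argument: containment by the weighted triangle inequality, reduction to the unconditioned Gaussian because conditioning on a superset can only increase the probability, and a density-times-volume bound in which the gain $(\alpha+\tfrac12)\log J!$ cancels the volume loss $(\alpha+\tfrac12)J\log J$ via $\log J!\ge J\log J-J$. The only difference is cosmetic. The paper integrates over the inscribed cube $\{|\theta_j-\theta_{0j}|\le rJ^{-\alpha-1/2}\}$, whose volume $(2r)^JJ^{-(\alpha+1/2)J}$ avoids the $\Gamma(J/2+1)$ factor, and it bounds the quadratic term on that cube by $2J(R_0^2+r^2)$ rather than through $\|\theta\|_{2,\alpha}\le R_*$.
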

\begin{proof}
For every $\theta\in B_{\alpha,J}$,
\[
 \|\theta\|_{2,\alpha}
 \le R_0+J^\alpha\|\theta-\theta_0^{(J)}\|_2
 \le R_0+r<R_*.
\]
Thus the ball lies inside the conditioning ellipsoid. To bound its unconditioned Gaussian probability, use the cube
\[
 C_{\alpha,J}=\{\theta:|\theta_j-\theta_{0j}|\le b_J,\ j\le J\},
 \qquad b_J=rJ^{-\alpha-1/2},
\]
which is contained in $B_{\alpha,J}$. Throughout this cube,
\[
 \sum_{j\le J}j^{2\alpha+1}\theta_j^2
 \le2J\|\theta_0\|_{2,\alpha}^2
       +2b_J^2J^{2\alpha+2}
 \le2J(R_0^2+r^2).
\]
Multiply the resulting lower bound on the Gaussian density by the cube's volume:
\begin{equation}\label{eq:volume-cancel}
 \begin{split}
 \widetilde\Pi_{\alpha,J}(B_{\alpha,J})
 &\ge\exp\left\{
 J\log\frac{2r}{\tau\sqrt{2\pi}}
 +(\alpha+\tfrac12)\{\log(J!)-J\log J\}
 -\frac{J(R_0^2+r^2)}{\tau^2}
 \right\}.
 \end{split}
\end{equation}
Since $\log(J!)\ge J\log J-J$, the negative logarithm of this bound is at most $CJ$, uniformly over the smoothness interval. Conditioning can only increase the probability of a set contained in the ellipsoid:
\[
 \Pi_{\alpha,J}(B_{\alpha,J})
 =\frac{\widetilde\Pi_{\alpha,J}(B_{\alpha,J})}
        {\widetilde\Pi_{\alpha,J}(\Theta_{\alpha,J}(R_*))}
 \ge\widetilde\Pi_{\alpha,J}(B_{\alpha,J})\ge e^{-CJ}.
\]
\end{proof}

\section{Proof of adaptive estimation}\label{app:main}
\begin{proof}[Proof of Theorem~\ref{thm:adaptive}]
We establish prior probability near the truth, control the posterior denominator, and construct tests on the retained smoothness classes. Fix $\alpha_0$, $p_0\in\cF_{\alpha_0}(R_0,b_0)$, and $G_0$ with $g_0\le\overline g$. All bounds below are uniform over these choices. Let
\[
 \beta=\max\{\alpha\in\cA_n:\alpha\le\alpha_0\},\qquad
 t=t_n(\beta),\quad\eps=\eps_n(\beta),\quad J_*=\lceil t\rceil.
\]
The grid spacing gives $0\le\alpha_0-\beta<c_\Delta/\log n$. Consequently,
\begin{equation}\label{eq:gridcompare}
 1\le\frac{t_n(\beta)}{t_n(\alpha_0)}
 \le C_\Delta:=\exp\left\{\frac{2c_\Delta}{(2\alpha_-+1)^2}\right\},
 \qquad
 1\le\frac{\eps_n(\beta)}{\eps_n(\alpha_0)}
 \le\sqrt{C_\Delta}.
\end{equation}

\medskip\noindent\emph{Step 1: Prior probability and the posterior denominator.}
Fix $r\in(0,R_*-R_0)$. Since $\theta_0\in\Theta_\beta(R_0)$, Lemma~\ref{lem:smallball} and the dimension prior give probability at least $w_n(\beta)e^{-C_1t}$ to
\[
 \alpha=\beta,\qquad J=J_*,\qquad
 \|\theta-\theta_0^{(J_*)}\|_2\le rJ_*^{-\beta}.
\]
On this event, $\theta\in\Theta_\beta(R_*)$ and
\[
 \|\theta-\theta_0\|_2
 \le rJ_*^{-\beta}+R_0(J_*+1)^{-\beta}
 \le(r+R_0)\eps.
\]
As $n\to\infty$, $\eps$ tends to zero uniformly over the grid. Lemma~\ref{lem:localKL} therefore bounds $K_{G_0}(p_0,p_\theta)+V_{G_0}(p_0,p_\theta)$ by $C\eps^2$ and the absolute log ratio by a fixed constant on this event, for all sufficiently large $n$. Apply Lemma~\ref{lem:denominator} with $u=1$ and $n\eps^2=t$. There is an event $\mathcal E_n$ satisfying
\begin{equation}\label{eq:oracleden}
 P_0^n(\mathcal E_n^c)\le e^{-c_Dt},\qquad
 D_n\ge w_n(\beta)e^{-C_Dt}
 \quad\text{on }\mathcal E_n.
\end{equation}
The constants $c_D,C_D$ are independent of $A$ because the smoothness probability is kept explicit. The prior weights also satisfy
\begin{equation}\label{eq:weightbounds}
 \frac{w_n(\alpha)}{w_n(\beta)}
 =e^{-A\{t_n(\alpha)-t\}},\qquad
 w_n(\beta)\ge L_n^{-1}e^{-At}.
\end{equation}

\medskip\noindent\emph{Step 2: Excluding excessively small smoothness indices.}
Choose $B>1$ so that $A(B-1)>C_D+3$, and define
\[
 \mathcal R_n=\{\alpha\in\cA_n:t_n(\alpha)>Bt\}.
\]
Use \eqref{eq:posterior-decomposition} with the zero test. Equations~\eqref{eq:oracleden}--\eqref{eq:weightbounds} give
\begin{align}\label{eq:roughdiscard}
 \E_0\Pi_n(\alpha\in\mathcal R_n\mid\cD_n)
 &\le e^{-c_Dt}
       +e^{C_Dt}\sum_{\alpha\in\mathcal R_n}
                      \frac{w_n(\alpha)}{w_n(\beta)}
       \notag\\
 &\le e^{-c_Dt}
       +L_n e^{\{C_D-A(B-1)\}t}
 \le Ce^{-ct}.
\end{align}
The last inequality holds uniformly because $\log L_n=o(n^{1/(2\alpha_++1)})$ and $t\ge n^{1/(2\alpha_++1)}$. Thus every fixed $A>0$ gives the required bound after choosing $B$.

\medskip\noindent\emph{Step 3: Covering numbers, tests, and contraction.}
Let $\gamma$ be the smallest grid point outside $\mathcal R_n$. Such a point exists because $\beta\notin\mathcal R_n$. We have $\gamma\le\beta$ and $t_n(\gamma)\le Bt$. By nesting of the coefficient ellipsoids, all retained densities belong to
\[
 \mathcal F_\gamma^*=\{p_\theta:\theta\in\Theta_\gamma(R_*)\}.
\]
For every $M\ge1$, Lemma~\ref{lem:entropy} gives
\begin{equation}\label{eq:retainedentropy}
 \log N(M\eps/64,\mathcal F_\gamma^*,d_{G_0})
 \le H_Bt,
\end{equation}
with $H_B$ depending on $B$ and the fixed class bounds. To verify its order, use
\[
 \eps^{-1/\gamma}
 =t_n(\gamma)
   \left\{\frac{t_n(\gamma)}{t}\right\}^{1/(2\gamma)}
 \le B^{1+1/(2\alpha_-)}t.
\]
All other constants in the covering bound are uniform over $\gamma\in[\alpha_-,\alpha_+]$; a radius exceeding the diameter of the class requires only one ball.

Lemma~\ref{lem:tests} and \eqref{eq:retainedentropy} give a test $\varphi_n$ such that
\begin{equation}\label{eq:maintests}
 P_0^n\varphi_n\le e^{(H_B-c_TM^2)t},\qquad
 \sup_{\substack{p\in\mathcal F_\gamma^*\\d_{G_0}(p,p_0)>M\eps}}
 P_p^n(1-\varphi_n)\le e^{-c_TM^2t}.
\end{equation}
Let $A_n^*$ be the set of prior parameters with $\alpha\notin\mathcal R_n$ and $d_{G_0}(p_\theta,p_0)>M\eps$. Substituting \eqref{eq:oracleden}, \eqref{eq:weightbounds}, and \eqref{eq:maintests} into \eqref{eq:posterior-decomposition} yields
\begin{equation}\label{eq:nonroughintegral}
 \E_0\Pi_n(A_n^*\mid\cD_n)
 \le e^{-c_Dt}+e^{(H_B-c_TM^2)t}
       +L_n e^{(C_D+A-c_TM^2)t}.
\end{equation}
Choose $M$ with $c_TM^2>H_B+C_D+A+3$. The right side is then bounded by $Ce^{-ct}$. Adding \eqref{eq:roughdiscard} proves contraction at rate $\eps$.

\medskip\noindent\emph{Step 4: Dimension and almost-sure convergence.}
The dimension prior gives $\Pi_n(J>L_Jt)\le e^{-\lambda L_Jt}$. Apply \eqref{eq:posterior-decomposition} again with the zero test:
\[
 \E_0\Pi_n(J>L_Jt\mid\cD_n)
 \le e^{-c_Dt}
       +L_n e^{(C_D+A-\lambda L_J)t}.
\]
Choose $\lambda L_J>C_D+A+3$. By \eqref{eq:gridcompare}, replace the contraction constant by $M\sqrt{C_\Delta}$ and take $L\ge C_\Delta\max(B,L_J)$. The union of the density-error and complexity events in \eqref{eq:mainbound} then has expected posterior probability at most $Ce^{-ct_n(\alpha_0)}$.

For a fixed truth, this expectation bound is summable because $t_n(\alpha_0)\ge n^{1/(2\alpha_++1)}$. Markov's inequality and Borel--Cantelli imply that the corresponding posterior probability tends to zero almost surely along the infinite iid sample.

For known smoothness, take $\gamma=\beta=\alpha_0$ and $w_n(\alpha_0)=1$. The second step is unnecessary; the other steps give the same contraction and dimension bounds. The dimension and coefficient priors are then independent of sample size.
\end{proof}

\begin{remark}[Posterior probability of smaller smoothness indices]\label{rem:nearest-grid}
A sufficiently large fixed $A$ gives a stronger complexity bound by excluding every grid point below $\beta$. If $\beta>\alpha_-$, the grid spacing implies
\[
 \frac{t_n(\beta-c_\Delta/\log n)}{t_n(\beta)}
 \ge\kappa_\Delta
 :=\exp\{2c_\Delta/(2\alpha_++1)^2\}>1.
\]
Every smaller grid point has at least this ratio. Repeating the calculation in \eqref{eq:roughdiscard} gives
\[
 \E_0\Pi_n(\alpha<\beta\mid\cD_n)
 \le e^{-c_Dt}
       +L_n e^{\{C_D-A(\kappa_\Delta-1)\}t}.
\]
Choose $A_0$ so that $A_0(\kappa_\Delta-1)>C_D+3$. There are $c>0$ and $n_0$, independent of $A\ge A_0$ and uniform over the theorem's truth and covariate classes, for which
\[
 \E_0\Pi_n(\alpha<\beta\mid\cD_n)
 \le2e^{-ct_n(\alpha_0)},\qquad n\ge n_0.
\]
If $\beta=\alpha_-$, the event is empty. Summability gives almost-sure convergence for each fixed truth. This conclusion controls the prior smoothness index relative to $\beta$; it does not identify a unique smoothness index for the density.
\end{remark}

\section{Proof of the minimax lower bound}\label{app:minimax}
\begin{proof}[Proof of Theorem~\ref{thm:minimax}]
Fix $\alpha\in[\alpha_-,\alpha_+]$ and $G_0\in\Gclass(\underline g,\overline g)$, and let $m=\lceil n^{1/(2\alpha+1)}\rceil$. We construct $2^m$ densities in the true class. Their coefficient distances give a lower bound on Hellinger separation, while neighboring sign vectors generate sample distributions with total variation bounded away from one.

\medskip\noindent\emph{Step 1: Density alternatives.}
For $\omega\in\{-1,1\}^m$, define
\[
 h_\omega
 =1+am^{-\alpha-1/2}\sum_{j=m+1}^{2m}\omega_{j-m}\phi_j,
 \qquad p_\omega=p_{h_\omega}.
\]
The constant $a>0$ will be chosen independently of $n,\alpha,G_0$. Every $h_\omega$ has outcome integral one. Its coefficient vector and supremum norm satisfy
\[
 \|\theta_\omega\|_{2,\alpha}^2
 \le a^2 2^{2\alpha_+},\qquad
 \|h_\omega-1\|_\infty
 \le am^{-\alpha}\{C_K(2m)^\rho\}^{1/2}\le Ca.
\]
The last inequality uses $\alpha>\rho/2$. Choose $a2^{\alpha_+}\le R_0$ and $Ca\le\min(1/2,1-b_0)$. Then every $p_\omega$ belongs to $\cF_\alpha(R_0,b_0)$, and $1/2\le h_\omega\le3/2$.

\medskip\noindent\emph{Step 2: Hellinger separation and neighboring tests.}
Let $d_H^{\rm bit}(\omega,\omega')$ be the number of differing coordinates. Proposition~\ref{prop:geometry}, using the common upper bound on the representations and $g_0\ge\underline g$, gives
\begin{equation}\label{eq:hypersep}
 d_{G_0}^2(p_\omega,p_{\omega'})
 \ge c_0\|\theta_\omega-\theta_{\omega'}\|_2^2
 =4c_0a^2m^{-2\alpha-1}d_H^{\rm bit}(\omega,\omega'),
\end{equation}
where $c_0>0$ is uniform over the stated classes. For sign vectors differing in one coordinate, the last assertion of Lemma~\ref{lem:localKL} gives
\[
 nK_{G_0}(p_\omega,p_{\omega'})
 \le Ca^2nm^{-2\alpha-1}\le Ca^2.
\]
Decrease $a$ until this is at most $1/4$.

For probability distributions $P,Q$, write $\TV(P,Q)=\sup_A|P(A)-Q(A)|$ and use the Hellinger normalization in \eqref{eq:hell}. Cauchy--Schwarz gives $\TV(P,Q)\le H(P,Q)$, and $-\log u\ge1-u$ gives $H^2(P,Q)\le\KL(P,Q)$. Independence gives the product identity
\[
 H^2(P^n,Q^n)
 =2-2\{1-H^2(P,Q)/2\}^n
 \le nH^2(P,Q).
\]
It follows that neighboring sample distributions have total variation distance at most $1/2$.

\medskip\noindent\emph{Step 3: From estimation to binary tests.}
Given a measurable conditional-density estimator $\widehat p_n$, select its nearest candidate $p_{\widehat\omega}$ in $d_{G_0}$, resolving ties by a fixed ordering. The selection is measurable because the candidate set is finite. By the triangle inequality and the definition of the nearest candidate,
\[
 d_{G_0}(p_{\widehat\omega},p_\omega)
 \le2d_{G_0}(\widehat p_n,p_\omega).
\]
Equation~\eqref{eq:hypersep} therefore yields
\[
 \E_\omega d_{G_0}^2(\widehat p_n,p_\omega)
 \ge c_0a^2m^{-2\alpha-1}
       \E_\omega d_H^{\rm bit}(\widehat\omega,\omega),
\]
where $\E_\omega$ denotes expectation under $P_{p_\omega,G_0}^n$.

Average over the $2^m$ sign vectors. For each coordinate, hold the other coordinates fixed and compare its two possible signs. The two testing-error probabilities sum to at least one minus the total variation distance of the neighboring sample distributions, hence to at least $1/2$. The average error probability for each coordinate is therefore at least $1/4$. Summing over coordinates gives
\[
 2^{-m}\sum_\omega
 \E_\omega d_{G_0}^2(\widehat p_n,p_\omega)
 \ge\frac{c_0a^2}{4}m^{-2\alpha}
 \ge c n^{-2\alpha/(2\alpha+1)}.
\]
The last constant is uniform because $m\le2n^{1/(2\alpha+1)}$ and $\alpha\le\alpha_+$. The supremum risk dominates this average. Taking the infimum over all estimators proves \eqref{eq:minimax}, including when the estimator knows $\alpha$ and $G_0$.
\end{proof}

\section{Functionals and polynomial smoothness}\label{app:extensions}
\subsection{Predictive risk}\label{app:predictive-details}
\begin{proof}[Proof of Corollary~\ref{cor:risk}]
Split the posterior integral according to whether $d_{G_0}(p_\theta,p_0)\le M\eps_n(\alpha_0)$. Theorem~\ref{thm:adaptive} and the bound $d_{G_0}^2\le2$ give
\[
 \E_0\int d_{G_0}^2(p_\theta,p_0)\,\Pi_n(d\theta\mid\cD_n)
 \le M^2\eps_n(\alpha_0)^2+2Ce^{-ct_n(\alpha_0)}
 \lesssim\eps_n(\alpha_0)^2.
\]
The comparison is uniform over the theorem's classes: $t_n(\alpha_0)$ grows at least as a fixed positive power of $n$, whereas the squared rates decrease polynomially.

Tonelli shows that the posterior mixture integrates to one over $y$ for almost every $x$. Concavity of the square root gives, for each sample,
\[
 \begin{split}
 d_{G_0}^2(\bar p_n,p_0)
 &=2-2\int\sqrt{p_0}
       \sqrt{\int p_\theta\,\Pi_n(d\theta\mid\cD_n)}
       \,d\mu_y\,dG_0\\
 &\le\int d_{G_0}^2(p_\theta,p_0)\,
                  \Pi_n(d\theta\mid\cD_n).
 \end{split}
\]
Taking $\E_0$ proves the predictive-risk bound. The argument does not require a common upper bound on the candidate densities and also applies to the later predictive-risk results.
\end{proof}

\subsection{Conditional functionals and uniform convergence}\label{app:functional-details}
\begin{proof}[Proof of Proposition~\ref{prop:functionals}]
For almost every $x$, Cauchy--Schwarz gives
\[
 \int|p-q|\,d\mu_y
 \le H_x\left\{\int(\sqrt p+\sqrt q)^2\,d\mu_y\right\}^{1/2}
 \le2H_x.
\]
The difference of two probabilities of the same event is at most half this $L_1$ distance. Apply that bound to $\{Y\le t\}$ and take the supremum over $t$ to obtain \eqref{eq:cdf}. Bounding the absolute integrand by $\|a\|_\infty|p-q|$ gives \eqref{eq:moment}.

Under the quantile assumptions, the conditional CDFs are continuous and $F_p(Q_p(\tau\mid x)\mid x)=F_0(Q_0(\tau\mid x)\mid x)=\tau$ for $0<\tau<1$. Also, $f_0\ge m$ implies
\[
 |F_0(v\mid x)-F_0(u\mid x)|\ge m|v-u|.
\]
Taking $u=Q_0(\tau\mid x)$ and $v=Q_p(\tau\mid x)$ gives
\[
 m|Q_p(\tau\mid x)-Q_0(\tau\mid x)|
 \le|F_0(Q_p(\tau\mid x)\mid x)-F_p(Q_p(\tau\mid x)\mid x)|.
\]
Taking the supremum over $\tau$ proves \eqref{eq:quantile}. Finally, integrating the conditional squared Hellinger distance against $dG_*/dG_0\le C_*$ proves \eqref{eq:overlap}.

The displayed losses are measurable. Right-continuity permits the CDF supremum to be taken over rational thresholds. Generalized quantiles are measurable by countable inversion of the conditional CDFs, and their left-continuity on $(0,1)$ permits the quantile supremum to be taken over rational indices. Integration of the bounded measurable moment integrand preserves measurability. Squaring the bounds and integrating over $G_0$, the posterior, and $P_0^n$ proves the stated risk consequences.
\end{proof}

\begin{proof}[Proof of Theorem~\ref{thm:uniform}]
The proof first excludes nonpositive representations. The reverse Hellinger inequality then gives coefficient contraction, and interpolation gives uniform convergence.

\medskip\noindent\emph{Step 1: Positivity and coefficient recovery.}
Lemma~\ref{lem:interpolation} and Assumption~\ref{ass:compact} imply that
\[
 \mathcal H_*=\{h_\theta:\theta\in\Theta_{\alpha_-}(R_*)\},
 \qquad
 \mathcal H_{\rm true}
 =\{h_\theta:\theta\in\Theta_{\alpha_-}(R_0),\ \min h_\theta\ge b_0\}
\]
are compact in the uniform norm. Their elements are continuous and uniformly bounded, with outcome integral one at every $x$. Since their normalizers are at least one, the map $h\mapsto p_h$ is continuous in the uniform norm on these classes. The set $\mathcal B=\{h\in\mathcal H_*:\min h\le0\}$ is closed in $\mathcal H_*$ and hence compact.

The density images of $\mathcal B$ and $\mathcal H_{\rm true}$ are disjoint. If two such densities agreed almost everywhere, continuity and full support of $\mu$ would make them equal everywhere. The density generated by the true representation is strictly positive, so the other representation could not vanish. For each $x$, continuity and connectedness of $\mathcal Y$ would make its sign constant in $y$. Its outcome integral one would force that sign to be positive, contradicting membership in $\mathcal B$.

The two density images are compact, and zero Hellinger distance under $\mu$ would imply equality everywhere. If $\mathcal B$ is nonempty, their minimum Hellinger distance under $\mu$ is therefore strictly positive. If it is empty, there is nothing to exclude. The lower bound $g_0\ge\underline g$ makes the positive separation uniform over $G_0$. For all sufficiently large $n$, Theorem~\ref{thm:adaptive} thus gives
\[
 \E_0\Pi_n(h_\theta\in\mathcal B\mid\cD_n)
 \le Ce^{-ct_n(\alpha_0)}.
\]
Outside $\mathcal B$, both $h_\theta$ and $h_0$ are nonnegative and have a common upper bound. Proposition~\ref{prop:geometry} and orthonormality give $\|\theta-\theta_0\|_2\le Cd_{G_0}(p_\theta,p_0)$. Together with Theorem~\ref{thm:adaptive}, this proves the coefficient rate.

\medskip\noindent\emph{Step 2: Uniform convergence and risk.}
On the contraction and complexity event of Theorem~\ref{thm:adaptive}, put $\gamma=\min(\alpha,\alpha_0)$. If $\alpha<\alpha_0$, the bound $t_n(\alpha)\le Lt_n(\alpha_0)$ gives
\[
 \frac{2(\alpha_0-\alpha)\log n}
      {(2\alpha+1)(2\alpha_0+1)}
 =\log\{t_n(\alpha)/t_n(\alpha_0)\}\le\log L.
\]
Hence $0\le\alpha_0-\gamma\le C/\log n$. Since $\|\theta-\theta_0\|_{2,\gamma}\le R_*+R_0$, interpolation on the coefficient-contraction event yields
\[
 \|h_\theta-h_0\|_\infty
 \le C\eps_n(\alpha_0)^{1-\rho/(2\gamma)}
 \le C u_n(\alpha_0).
\]
The second inequality uses $(\alpha_0-\gamma)\log n\le C$ and $\gamma\ge\alpha_->\rho/2$.

For any two representations with outcome integral one and $|h|,|k|\le B$,
\[
 \|Q_h-Q_k\|_\infty\le2B\|h-k\|_\infty,\qquad
 \|p_h-p_k\|_\infty
 \le2B(1+B^2)\|h-k\|_\infty.
\]
These follow by expanding $h^2-k^2$ and using $Q_h,Q_k\ge1$. They give the asserted uniform density rate. Since $u_n(\alpha_0)\to0$ uniformly over the smoothness interval, the representation error is eventually smaller than $b_0/2$. Thus $\min h_\theta>b_0/2$ on the same event, proving \eqref{eq:uniform-contraction}.

The coefficient, representation, and density losses are uniformly bounded over the prior and truth classes. Split each posterior expectation at its established rate and use the exponentially small probability of the complement. This gives squared coefficient risk $O(\eps_n(\alpha_0)^2)$ and squared uniform risks $O(u_n(\alpha_0)^2)$.
\end{proof}

\subsection{Polynomial bases and differentiability}\label{app:polynomial-details}
\begin{proof}[Proof of Proposition~\ref{prop:classical}]
Under $dt/2$, the normalized Legendre polynomials are $\psi_k=\sqrt{2k+1}P_k$. The identities $P_k(1)=1$ and $|P_k(t)|\le1$ for $t\in[-1,1]$ imply
\[
 |\Phi_\nu(z)|^2\le\prod_{r=1}^q(2\nu_r+1)
 =|\Phi_\nu(z_*)|^2;
\]
see \citet[Sections 18.3 and 18.14]{DLMF}. Enlarging the terms of total degree at most $M$ to the coordinate rectangle $0\le\nu_r\le M$ bounds their squared basis sum by
\[
 \prod_{r=1}^q\sum_{k=0}^M(2k+1)=(M+1)^{2q}.
\]
For a lower bound at $z_*$, retain multi-indices with every coordinate between $M/(4q)$ and $M/(2q)$. For large $M$, there are at least $cM^q$ such indices and each contributes at least $cM^q$. They have positive outcome degree and total degree at most $M$. Thus the lower and upper sums are both of order $M^{2q}\asymp J_M^2$. An incomplete degree level contains all terms from the preceding level and is contained in the next complete level. Their counts are comparable, proving \eqref{eq:legendre-sharp} after adjusting constants for small $J$.

Under the arcsine reference measure, $\psi_0=1$ and $\psi_k=\sqrt2T_k$ for $k\ge1$. The identity $T_k(\cos u)=\cos(ku)$ verifies orthonormality and gives $|\psi_k|\le\sqrt2$. Each tensor term has squared magnitude at most $2^q$ everywhere and at least one at $z_*$. Summation proves \eqref{eq:cheb-sharp}.

Finally, subtracting the multi-indices with zero outcome degree from all indices of total degree at most $M$ gives \eqref{eq:count}. Taking the difference between successive counts gives the number of terms of degree exactly $M$.
\end{proof}

\begin{proof}[Proof of Corollary~\ref{cor:physical}]
The degree-index norm comparison embeds the degree class in an index ellipsoid with $\alpha=s/q$. It also embeds an index ellipsoid of a smaller positive radius in the degree class. The radius comparisons are uniform over the prescribed compact interval of $s$. Proposition~\ref{prop:classical} gives $\rho=2$ for Legendre and $\rho=1$ for Chebyshev. With $g_0\le\overline g$, apply Theorem~\ref{thm:adaptive} and Corollary~\ref{cor:risk} to the containing ellipsoid. If also $g_0\ge\underline g$, apply Theorem~\ref{thm:minimax} to the contained ellipsoid. Both inclusions retain $h\ge b_0$. Substituting $\alpha=s/q$ gives \eqref{eq:physical-rate} and its square as the minimax risk.
\end{proof}

\begin{proof}[Proof of Proposition~\ref{prop:embedding-sharp}]
Lemma~\ref{lem:interpolation} proves boundedness and uniform convergence when $\alpha>\rho/2$. To show necessity, define
\[
 A_N=\sum_{j\le N}j^{-2\alpha}\phi_j(z_*)^2,\qquad
 \theta_j^{[N]}
 =A_N^{-1/2}j^{-2\alpha}\phi_j(z_*)\ind\{j\le N\}.
\]
The finite vector $\theta^{[N]}$ has weighted norm one, and its series value at $z_*$ equals $A_N^{1/2}$. A uniform bound would therefore require $\sup_NA_N<\infty$.

By Proposition~\ref{prop:classical}, $S_m=\sum_{j\le m}\phi_j(z_*)^2\asymp m^\rho$. Summation by parts gives
\[
 A_N=N^{-2\alpha}S_N
 +\sum_{m=1}^{N-1}
       \{m^{-2\alpha}-(m+1)^{-2\alpha}\}S_m.
\]
For $\alpha>0$ and $2\alpha\le\rho$, the sum diverges because its summands have order $m^{\rho-2\alpha-1}$; divergence is logarithmic at equality. For $\alpha=0$, $A_N=S_N\to\infty$. Thus the thresholds are $\alpha>1$ for Legendre and $\alpha>1/2$ for Chebyshev.

Every finite series is continuous, and the reference measures have full support. Its essential supremum therefore equals its supremum, so the same examples rule out a uniform essential-supremum bound. Above the threshold, linearity and the interpolation bound give the stated bounded linear map.
\end{proof}

\begin{proof}[Proof of Proposition~\ref{prop:sobolev}]
For every multi-index $\nu$,
\[
 1+\lambda_\nu
 \le(1+|\nu|_1)^2
 \le(q+1)(1+\lambda_\nu).
\]
Raise these inequalities to $s$, multiply by the squared Legendre coefficients of $v$, and sum. This proves \eqref{eq:spectral-norm} and equivalence of the two domains. For a polynomial $v$, integration by parts in each coordinate gives
\[
 \langle v,\mathcal Lv\rangle
 =\sum_{r=1}^q\int(1-z_r^2)|\partial_{z_r}v|^2\,d\mu.
\]
The boundary terms vanish because $1-z_r^2=0$ at $z_r=\pm1$. Adding $\|v\|_2^2$ proves \eqref{eq:weighted-energy}.

The identity extends by completing polynomials in the norm given by the square root of its right side. In coefficients, this squared norm is $\sum_\nu(1+\lambda_\nu)\theta_\nu^2$. Finite sequences are dense in this weighted coefficient space, whose image in $L_2(\mu)$ is $D((I+\mathcal L)^{1/2})$. The quantities $\sqrt{1-z_r^2}\,\partial_{z_r}v$ in the completion are the $L_2(\mu)$ limits of the corresponding polynomial derivatives. This specifies the derivative space needed for the identity without claiming an identification with a maximal weak-derivative domain.

For the final assertion, positivity on the compact cube and the $C^{2m}$ extension imply that $\sqrt p$ is $C^{2m}$ on a neighborhood of the cube. Its derivatives are bounded on a sufficiently small compact neighborhood. Differentiation under the outcome integral is therefore justified by dominated convergence and gives $c_p\in C^{2m}$. The integral $c_p$ has a positive lower bound, so $v=h_p-1=\sqrt p/c_p-1$ is also $C^{2m}$.

The differential expression $(I+\mathcal L)^mv$ is continuous on the cube and hence belongs to $L_2(\mu)$. Repeated integration by parts against tensor Legendre polynomials gives its coefficient at $\nu$ as $(1+\lambda_\nu)^m\theta_\nu$. At each integration, the factor $1-z_r^2$ removes the boundary term. Parseval and \eqref{eq:spectral-norm} now give
\[
 \|h_p-1\|_{2m,\mathrm{deg}}^2
 \le(q+1)^{2m}
       \sum_\nu(1+\lambda_\nu)^{2m}\theta_\nu^2
 =(q+1)^{2m}\|(I+\mathcal L)^mv\|_2^2<\infty.
\]
\end{proof}

\begin{proof}[Proof of Corollary~\ref{cor:anisotropic}]
The inequality $\Lambda_\nu\le T$ defines a coordinate rectangle with order $T^{D_a}$ indices. Removing indices with zero outcome degree removes a fraction of order $T^{-1/a_1}$ and leaves the same order. Hence the $j$th ordered weight is comparable to $j^{1/D_a}$. The anisotropic coefficient norm is therefore equivalent to the index norm with $\alpha=u/D_a$, with comparison constants uniform on the prescribed compact interval of $u$.

For Legendre terms, the squared basis sum over this rectangle is bounded by $C\prod_rT^{2/a_r}=CT^{2D_a}$. This also gives a bound of order $j^2$ for an incomplete level: if the $j$th weight is $T$, all retained terms lie in the rectangle at $T$, while the smaller rectangle at $T/2$ precedes that level and has at least $cT^{D_a}$ terms for large $T$. Thus $T^{D_a}\lesssim j$. Adjusting the constant for small $j$ verifies \eqref{eq:kernel} with $\rho=2$. For Chebyshev, each tensor term is bounded in absolute value by $2^{q/2}$, giving $\rho=1$ under this ordering as well.

After the norm and radius comparisons, Theorem~\ref{thm:adaptive} and Corollary~\ref{cor:risk} apply under $g_0\le\overline g$. The additional bound $g_0\ge\underline g$ permits application of Theorem~\ref{thm:minimax}. Substituting $\alpha=u/D_a$ gives \eqref{eq:anisotropic-rate}, its squared minimax risk, and the stated smoothness thresholds.
\end{proof}

\subsection{Sparse approximation}\label{app:selection-details}
\begin{proof}[Proof of Corollary~\ref{cor:bestk}]
Put $s_n=|S_n|$. Since $S_n$ is admissible, it can be used in the minimum defining \eqref{eq:sparseoracle}. The inequalities $\binom{K_n}{s_n}\le K_n^{s_n}$ and $K_n\le n^\kappa$ give
\[
 \eta_n^2
 \le R^2k_n^{-2\gamma}
       +\{(\kappa+1)s_n+1\}\frac{\log n}{n}.
\]
Use $k_n\ge aI_n$, $s_n\le k_n\le bI_n$, and $I_n\log n/n=I_n^{-2\gamma}$. Since $I_n\ge1$ eventually, $\log n/n\le I_n^{-2\gamma}$. Therefore
\[
 \eta_n^2
 \le\{R^2a^{-2\gamma}+(\kappa+1)b+1\}I_n^{-2\gamma}.
\]
Taking square roots gives the stated rate. It tends to zero, so the condition $\eta_n\le1$ in Theorem~\ref{thm:sparse} holds for all sufficiently large $n$.
\end{proof}

\section{Proof for densities that can vanish}\label{app:floor}
Write $H=d_{G_0}$. The constant one in the mixtures below is the reference density with respect to $\mu_y$. Joint convexity of squared Hellinger distance gives, for $0\le\delta\le1$,
\begin{align}
 H^2((1-\delta)p+\delta,(1-\delta)q+\delta)&\le H^2(p,q),\label{eq:mix-contract}\\
 H^2(p,(1-\delta)p+\delta)&\le2\delta.\label{eq:mix-bias}
\end{align}
For the first bound, integrate the inequality
\[
 \sqrt{\{(1-\delta)p+\delta\}\{(1-\delta)q+\delta\}}
 \ge(1-\delta)\sqrt{pq}+\delta.
\]
For the second, write $p=(1-\delta)p+\delta p$ and use $H^2(p,1)\le2$. The triangle inequality and Proposition~\ref{prop:geometry} therefore imply
\begin{equation}\label{eq:floor-approx}
 H^2(p_0,p_{1,\theta,n})
 \le2\overline g\norm{\theta-\theta_0}_2^2+4\delta_n.
\end{equation}
This approximation bound does not require the representations to be nonnegative.

\begin{lemma}[Likelihood ratios when the truth can vanish]\label{lem:upper-ratio}
Let $p,q$ be conditional densities with $q>0$ almost everywhere under $\mu_y\otimes G_0$. Suppose $p/q\le M$ almost surely under $P_{p,G_0}$, for some finite $M\ge1$. Then
\begin{equation}\label{eq:upper-ratio}
 K_{G_0}(p,q)+V_{G_0}(p,q)
 \le C(1+\log M)^2H^2(p,q),\qquad
 \log(p/q)\le\log M\quad P_{p,G_0}\text{-almost surely},
\end{equation}
where $C$ is a numerical constant.
\end{lemma}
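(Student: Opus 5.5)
The argument works pointwise in the likelihood ratio. Set $t=q/p$ on $\{p>0\}$, so that $t\ge1/M$ holds $P_{p,G_0}$-almost surely; the second claim, $\log(p/q)=-\log t\le\log M$, is then immediate. On $\{p=0\}$ the integrands of $K_{G_0}$ and $V_{G_0}$ vanish. Because $q>0$ almost everywhere, $t$ is finite and positive $P_{p,G_0}$-almost surely. The two divergences are
\[
 K_{G_0}=\int p(-\log t)\,d\mu_y\,dG_0,\qquad V_{G_0}=\int p(\log t)^2\,d\mu_y\,dG_0 .
\]
The Hellinger distance counts the mass of $q$ on $\{p=0\}$ as well:
\[
 H^2=\int p(\sqrt t-1)^2\,d\mu_y\,dG_0+\int_{\{p=0\}}q\,d\mu_y\,dG_0 .
\]
It therefore suffices to dominate each integrand by $C(1+\log M)^2p(\sqrt t-1)^2$, after correcting for the fact that $-\log t$ is not itself nonnegative.

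For $K$, I use normalization as in Lemma~\ref{lem:localKL}, with the inequality $\int p(t-1)\,d\mu_y\,dG_0=\int_{\{p>0\}}q-1\le0$. This gives
\[
 K\le\int p\,(-\log t+t-1)\,d\mu_y\,dG_0 ,
\]
where the integrand is nonnegative. It then remains to bound two ratios uniformly on $t\in[1/M,\infty)$:
\[
 \frac{-\log t+t-1}{(\sqrt t-1)^2}\qquad\text{and}\qquad\frac{(\log t)^2}{(\sqrt t-1)^2}.
\]
Substituting $s=\sqrt t$, the first ratio is $(2(s-1)-2\log s+(s-1)^2)/(s-1)^2$, which tends to $1$ as $s\to\infty$ and is bounded near $s=1$. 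For $s\in[M^{-1/2},1]$ we have $-\log s\le \log s^{-1}$ and $s-1-\log s\le(1-s)\,|\log s|/(1-s)\cdot\ldots$; more directly, for small $s$ the numerator is at most $1+\log M$ while the denominator is at least $1/4$ once $s\le1/2$. Hence the first ratio is at most $C(1+\log M)$.

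The second ratio equals $4(\log s)^2/(s-1)^2$. For $s\ge1$ it is bounded because $\log s\le s-1$. For $s\in[1/2,1]$ it is bounded by a Taylor bound, and for $s\in[M^{-1/2},1/2]$ it is at most $16\cdot(\tfrac12\log M)^2$. Together these give $C(1+\log M)^2$.

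Multiplying by $p$ and integrating yields $K+V\le C(1+\log M)^2H^2$, with the $\{p=0\}$ part of $H^2$ only helping. The one delicate step is the $K$ bound. The inequality is applied in the correct direction: replacing $\int p(t-1)$ by its upper bound $0$ can only increase the right side, so it suffices that $-\log t+t-1\ge0$. Beyond that, only measurability and the almost-sure conventions need checking, and these are routine.
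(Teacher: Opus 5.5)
Your scalar ratio bounds, the bound on $V_{G_0}$, and the almost-sure bound $\log(p/q)\le\log M$ are correct and follow the paper's route. The step for $K_{G_0}$, however, has the inequality in the wrong direction. With $A=\{p>0\}$ and $t=q/p$, write $-\log t=(t-1-\log t)-(t-1)$. Then
\[
 K_{G_0}(p,q)=\int_A p(t-1-\log t)\,d\mu_y\,dG_0-\int_A p(t-1)\,d\mu_y\,dG_0 .
\]
Since $\int_A p(t-1)=\int_A q-1\le0$, the second term $-\int_A p(t-1)$ is nonnegative. Dropping it therefore gives $K_{G_0}\ge\int_A p(t-1-\log t)$, not $\le$. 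In Lemma~\ref{lem:localKL} this term was exactly zero because the truth there is bounded below. Here it need not vanish, and that is the situation this lemma is meant to handle.

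A concrete failure: take $p,q$ not depending on $x$, $\mu_y$ uniform on $[0,1]$, $q=1$, and $p=(1-\eta)^{-1}\ind_{[0,1-\eta]}$. Then $M=(1-\eta)^{-1}$ and $K=-\log(1-\eta)\approx\eta$. But $\int_A p(t-1-\log t)\approx\eta^2/2$ and $\int_A p(\sqrt t-1)^2\approx\eta^2/4$. So no bound of $K_{G_0}$ by the $\{p>0\}$ part of $H^2$ alone can hold, however the scalar ratios are handled.

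The repair is to use the exact identity $\int_A p(t-1)=-\int_{A^c}q$, which gives
\[
 K_{G_0}(p,q)=\int_A p(t-1-\log t)\,d\mu_y\,dG_0+\int_{A^c}q\,d\mu_y\,dG_0 .
\]
Your first scalar ratio bound controls the first term. The second term is exactly the $\{p=0\}$ part of $H^2(p,q)$ that you wrote down but set aside as ``only helping''; it is in fact needed. With this change your argument coincides with the paper's proof.
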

\begin{proof}
On $A=\{p>0\}$, put $t=q/p$. The ratio bound gives $t\ge1/M$. For $t\in[1/M,\infty)$,
\[
 \frac{t-1-\log t}{(\sqrt t-1)^2}\le C(1+\log M),\qquad
 \frac{(\log t)^2}{(\sqrt t-1)^2}\le C(1+\log M)^2.
\]
Both ratios extend continuously at one and are bounded on $[1/2,2]$. For $t\ge2$, the denominator is bounded below by a positive numerical multiple of $t$, which bounds both ratios. For $1/M\le t\le1/2$, the denominator is bounded below by a positive numerical constant, and $|\log t|\le\log M$.

All integrals in the rest of the proof are with respect to $\mu_y\otimes G_0$. Multiplying the second bound by $p$ and integrating over $A$ bounds $V_{G_0}(p,q)$ by the right-hand side of \eqref{eq:upper-ratio}. In particular, the log ratio is integrable under $P_{p,G_0}$. Normalization of $p$ and $q$ then gives
\[
 K_{G_0}(p,q)
 =\int_A p(t-1-\log t)+\int_{A^c}q.
\]
The first scalar bound controls the integral over $A$. On $A^c$, the squared-Hellinger integrand equals $q$, whereas the integrand defining $V_{G_0}$ is zero. Combining these bounds proves the moment inequality. Taking logarithms of $p/q\le M$ proves the almost-sure upper bound.
\end{proof}

\begin{proof}[Proof of Theorem~\ref{thm:floor}]
We first construct a prior event that bounds the posterior denominator. We then use prior weight ratios and tests to control the remaining posterior probability. Fix $\alpha_0,p_0,G_0$ in the classes of the theorem. Constants in the first assertion are uniform over these choices and may depend on the fixed smoothness endpoints, radii, $B_0$, $\overline g$, and prior parameters. The proof of the positive-subclass assertion also uses $b_0$, $\alpha_p$, and the constants in Assumption~\ref{ass:kernel}.

\textit{1. Approximation and prior probability.} Let $\beta$ be the largest grid point not exceeding $\alpha_0$. For the first assertion, set
\[
 t=t_{1,n}(\beta),\qquad \eps=\sqrt{t/n},\qquad J_* =\lceil t\rceil.
\]
Fix $r\in(0,R_*-R_0)$ and consider the prior event
\[
 \ell=1,\qquad \alpha=\beta,\qquad J=J_*,\qquad
 \norm{\theta-\theta_0^{(J_*)}}_2\le rJ_*^{-\beta}.
\]
Since $\theta_0\in\Theta_\beta(R_0)$, Lemma~\ref{lem:smallball} and the dimension prior give this event probability at least $w_n(1,\beta)e^{-C_1t}$. Truncation and the radius of the coefficient ball give
\[
 \norm{\theta-\theta_0}_2
 \le rJ_*^{-\beta}+R_0(J_*+1)^{-\beta}
 \le CJ_*^{-\beta}.
\]

The outcome integral of $h_0$ is one, so $Q_0\ge1$. Hence $|h_0|\le B_0$ implies $p_0\le B_0^2$, whether $h_0$ is signed or nonnegative. Every density in the approximating event satisfies $p_{1,\theta,n}\ge\delta_n$. Applying Lemma~\ref{lem:upper-ratio} with $M=B_0^2/\delta_n$ and using \eqref{eq:floor-approx} yields
\[
 \begin{split}
 K_{G_0}(p_0,p_{1,\theta,n})+V_{G_0}(p_0,p_{1,\theta,n})
 &\le C(\log n)^2\{J_*^{-2\beta}+\delta_n\}
 \le C\eps^2,\\
 \log(p_0/p_{1,\theta,n})&\le C\log n
 \quad P_0\text{-almost surely}.
 \end{split}
\]
The last moment bound follows from $t^{2\beta+1}=n(\log n)^2$ and $\delta_n=n^{-3}$. It is uniform over the fixed smoothness interval.

\textit{2. The posterior denominator.} For a prior event of probability $b_n$ on which $K\le C_K'\eps^2$, $V\le C_V\eps^2$, and $\log(p_0/p)\le\Lambda_n$ almost surely under $P_0$, Lemma~\ref{lem:denominator} with margin one gives
\begin{equation}\label{eq:one-sided-denom}
 P_0^n\{D_n<b_n e^{-(C_K'+1)n\eps^2}\}
 \le\exp\left\{-\frac{n\eps^2}{2(C_V+\Lambda_n/3)}\right\}.
\end{equation}
Apply this inequality to the event just constructed. Since $n\eps^2=t$ and $\Lambda_n\le C\log n$, there is an event $\mathcal E_n$ such that
\begin{equation}\label{eq:floor-oracle-den}
 P_0^n(\mathcal E_n^c)\le e^{-ct/\log n},\qquad
 D_n\ge w_n(1,\beta)e^{-C_Dt}
 \quad\text{on }\mathcal E_n.
\end{equation}
The constant $C_D$ does not depend on $A$, because the component weight has not been included in the exponential constant.

For the positive subclass, use the component $\ell=0$ and instead set $t=t_{0,n}(\beta)$, $\eps=\sqrt{t/n}$, and $J_*=\lceil t\rceil$. Use the same coefficient ball. Since $\alpha_0\ge\alpha_p>\rho/2$, the grid point $\beta$ eventually lies in a fixed interval above $\rho/2$, uniformly in $\alpha_0$. Lemma~\ref{lem:localKL} then gives $K+V\le C\eps^2$ and a fixed upper bound on the log ratio. Thus \eqref{eq:floor-oracle-den} holds with $w_n(0,\beta)$ and failure probability $e^{-ct}$.

\textit{3. Prior weights, entropy, and tests.} In either case, let $(\ell_*,\beta)$ denote the approximating component and retain $t=t_{\ell_*,n}(\beta)$ and $\eps=\sqrt{t/n}$. The component weights satisfy
\[
 w_n(\ell_*,\beta)\ge(2L_n)^{-1}e^{-At},\qquad
 \frac{w_n(\ell,\alpha)}{w_n(\ell_*,\beta)}
 =e^{-A\{t_{\ell,n}(\alpha)-t\}}.
\]
Choose $B>1$ so that $A(B-1)>C_D+3$. Equation~\eqref{eq:posterior-decomposition}, applied with the zero test, bounds the expected posterior probability of components with $t_{\ell,n}(\alpha)>Bt$ by
\[
 P_0^n(\mathcal E_n^c)
 +e^{C_Dt}\sum_{t_{\ell,n}(\alpha)>Bt}
       \frac{w_n(\ell,\alpha)}{w_n(\ell_*,\beta)}
 \le P_0^n(\mathcal E_n^c)
       +2L_n e^{\{C_D-A(B-1)\}t}.
\]
This argument works for every fixed $A>0$ by increasing $B$ as needed.

Let $\gamma$ be the smallest smoothness index among the retained components. Their coefficients belong to $\Theta_\gamma(R_*)$, and $t_{0,n}(\gamma)\le Bt$. The entropy bound requires only the upper bound on $g_0$. Specifically, Proposition~\ref{prop:geometry}, \eqref{eq:mix-contract}, and Lemma~\ref{lem:entropy} apply to each of the two density specifications. The identity
\[
 \eps^{-1/\gamma}
 =t_{0,n}(\gamma)
   \left\{\frac{t_{0,n}(\gamma)}{t}\right\}^{1/(2\gamma)}
 \le B^{1+1/(2\alpha_-)}t
\]
shows that the log covering number at radius $M\eps/64$, for $M\ge1$, is at most $C_Bt$. Combining the two coverings adds at most $\log2$, which can be included in $C_Bt$.

Lemma~\ref{lem:tests} therefore gives a test with type-I error at most $e^{(C_B-c_TM^2)t}$ and type-II error at most $e^{-c_TM^2t}$ against retained densities at distance greater than $M\eps$ from $p_0$. In \eqref{eq:posterior-decomposition}, the integrated type-II term divided by the denominator lower bound is at most
\[
 2L_n e^{(C_D+A-c_TM^2)t}.
\]
Choose $M$ so that $c_TM^2>C_B+C_D+A+3$, and add the bound for discarded components. Since $\log L_n=o(t)$ uniformly, the resulting expected posterior probability is at most $Ce^{-ct/\log n}$ for the bounded class and $Ce^{-ct}$ for the positive subclass.

\textit{4. Rates and risk.} The grid spacing gives $0\le\alpha_0-\beta<c_\Delta/\log n$. Consequently, $t_{\ell_*,n}(\beta)\asymp t_{\ell_*,n}(\alpha_0)$ uniformly over the prescribed smoothness interval. For $\ell_*=1$, use $\log\{n(\log n)^2\}/\log n=O(1)$ in this comparison. Increasing $M$ by the resulting fixed factor proves \eqref{eq:floor-bound} and \eqref{eq:mixed-exact-bound}.

In both cases, the exponential remainder is smaller than every fixed negative power of $n$, uniformly over the relevant class. Since squared Hellinger loss is at most two, splitting its posterior expectation at $M^2\eps^2$ gives the stated squared-risk bounds. The predictive bounds follow from convexity as in Appendix~\ref{app:predictive-details}.

For the prior using only the regularized specification, retain $\ell=1$ throughout. The coefficient probability, approximation, and entropy bounds are unchanged, and $w_n^{\mathrm{floor}}$ has the same weight-ratio formula. The argument therefore proves its contraction and risk conclusions. None of the first-assertion calculations uses completeness, a basis-growth bound, or a lower bound on $g_0$.
\end{proof}

\section{Minimax risk at every positive degree of smoothness}\label{app:all-s}
\begin{proof}[Proof of Theorem~\ref{thm:all-s-minimax}]
We construct an estimator for the upper bound and bounded positive alternatives for the lower bound. The constants are uniform in $n$, $s\in[s_-,s_+]$, and $G_0\in\Gclass(\underline g,\overline g)$; they may depend on $q$, the fixed radii and bounds, and the smoothness endpoints. Both covariate-density bounds are used for the finite-candidate estimator below, although its construction does not use their values.

\textit{1. An estimator based on a finite set of densities.} Let $q_1,\ldots,q_N$ be conditional densities chosen before observing the data. For each ordered pair $(i,j)$, compute the sum of the scores in Lemma~\ref{lem:tests}, with $q_i$ as the first density and $q_j$ as the second. Say that $j$ is preferred to $i$ if this sum is nonnegative. At zero, both comparisons are accepted. The type-II bound for the reversed comparison controls the non-strict error event in this rule.

Define
\[
 D_{ij}=d_{\mu_x}^2(q_i,q_j),\qquad
 S_i=\max\bigl(\{D_{ij}:j\text{ is preferred to }i\}\cup\{0\}\bigr),
\]
and let $\widehat i$ minimize $S_i$, resolving ties by the smallest index. The scores and the reference-measure distances $D_{ij}$ require neither $G_0$ nor its density bounds. The estimator is measurable because the candidate set is finite and all scores are measurable. For every $i$,
\[
 d_{\mu_x}^2(q_{\widehat i},q_i)\le S_i.
\]
Indeed, one of $i$ and $\widehat i$ is preferred to the other. Their squared distance therefore enters either $S_i$ or $S_{\widehat i}\le S_i$.

Fix $i$ and set $e_i=d_{\mu_x}^2(p_0,q_i)$. If $D_{ij}>1024(\overline g/\underline g)e_i$, then
\[
 d_{G_0}(p_0,q_i)
 \le\sqrt{\overline g e_i}
 <\frac{\sqrt{\underline g D_{ij}}}{32}
 \le\frac{d_{G_0}(q_i,q_j)}{32}.
\]
Apply the type-II bound in Lemma~\ref{lem:tests} with $q_i$ as the neighborhood center and $q_j$ as the null density. Writing $a=c_Tn\underline g$, we obtain
\[
 P_0^n\{j\text{ is preferred to }i\}\le e^{-aD_{ij}}
 \quad\text{when }D_{ij}>1024(\overline g/\underline g)e_i.
\]
For $t\ge1024(\overline g/\underline g)e_i$,
\[
 S_i\le t+\sum_j(D_{ij}-t)_+
             \ind_{\{j\text{ is preferred to }i\}},
 \qquad
 \E_0 S_i\le t+Na^{-1}e^{-at}.
\]
The expectation bound follows from $(D_{ij}-t)e^{-aD_{ij}}\le a^{-1}e^{-at}$ when $D_{ij}>t$. Taking $t=1024(\overline g/\underline g)e_i+\log(N)/a$ and using $d_{G_0}^2\le\overline g d_{\mu_x}^2$ gives
\begin{equation}\label{eq:finite-tournament-oracle}
 \E_0d_{G_0}^2(q_{\widehat i},p_0)
 \le C_{\underline g,\overline g}
 \left\{d_{\mu_x}^2(p_0,q_i)+\frac{\log N+1}{n}\right\},
 \qquad i=1,\ldots,N.
\end{equation}
Here we used the squared triangle inequality to bound the loss by $2\overline g(S_i+e_i)$.

Proposition~\ref{prop:geometry} and Lemma~\ref{lem:entropy} give a finite covering of the density image of $\Theta_\alpha(R)$ with $\log N\le C\eps^{-1/\alpha}$ in $d_{\mu_x}$. The coefficient net, and hence the candidate densities, can be chosen without $G_0$ or its bounds. At $\eps=n^{-\alpha/(2\alpha+1)}$, this covering satisfies $\log N\le Cn\eps^2$ and $\min_i e_i\le C\eps^2$. Equation~\eqref{eq:finite-tournament-oracle} therefore gives risk at most $C\eps^2$ for every mean-one $L_2(\mu)$ representation in the ellipsoid. Boundedness and nonnegativity of the representation are not needed for this upper bound. The degree-index norm comparison with $\alpha=s/q$ proves the upper bound in \eqref{eq:all-s-minimax} for both polynomial systems.

\textit{2. Lower bound for Legendre series.} Choose a fixed even $C^\infty$ function $\eta$, supported in $[-1/4,1/4]$, equal to one on $[-1/8,1/8]$, and satisfying $0\le\eta\le1$. Put
\[
 b(z)=z_1\prod_{r=1}^q\eta(z_r).
\]
The function $b$ has positive squared $L_2$ norm and zero integral in its first coordinate. Partition $[-1/2,1/2]^q$ into $M^q$ equal cubes with centers $z_k$, and define
\[
 b_{M,k}(z)=b(M(z-z_k)),\qquad
 v_\omega=aM^{-s}\sum_k\omega_k b_{M,k},\qquad
 h_\omega=1+v_\omega,
 \quad\omega\in\{-1,1\}^{M^q}.
\]
Each bump is supported inside its cube, so the supports are disjoint. Oddness in the outcome coordinate gives $\int h_\omega(y,x)\,\mu_y(dy)=1$ for every $x$. A change of variables yields
\begin{align}
 \norm{v_\omega}_\infty&\le a\norm{b}_\infty,\label{eq:bump-bounded}\\
 \norm{D^\zeta v_\omega}_2^2
 &\le C_\zeta a^2M^{2|\zeta|_1-2s}
 \quad\text{for every fixed derivative multi-index }\zeta.
 \label{eq:bump-derivatives}
\end{align}
In the second bound, each differentiated bump contributes a factor $M^{2|\zeta|_1-q}$ and the $M^q$ supports are disjoint.

Fix an integer $r>s_+/2$. The differential expression $(I+\mathcal L)^r$ is a sum of partial derivatives of orders at most $2r$, with bounded smooth coefficients on the cube. Thus
\[
 \norm{v_\omega}_2^2\le Ca^2M^{-2s},\qquad
 \norm{(I+\mathcal L)^rv_\omega}_2^2\le Ca^2M^{4r-2s}.
\]
The bumps are smooth and supported in the interior. Repeated integration by parts therefore shows that the coefficient of $(I+\mathcal L)^rv_\omega$ at index $\nu$ is $(1+\lambda_\nu)^r\theta_{\omega,\nu}$, without boundary terms. Parseval's identity and H\"older's inequality give
\begin{align*}
 \sum_\nu(1+\lambda_\nu)^s\theta_{\omega,\nu}^2
 &\le\left(\sum_\nu\theta_{\omega,\nu}^2\right)^{1-s/(2r)}
       \left(\sum_\nu(1+\lambda_\nu)^{2r}
                       \theta_{\omega,\nu}^2\right)^{s/(2r)}\\
 &\le Ca^2.
\end{align*}
The powers of $M$ cancel. The coefficients with outcome degree zero vanish because the outcome integrals are zero. Proposition~\ref{prop:sobolev} now gives $\norm{v_\omega}_{s,\mathrm{deg}}\le Ca$, with $C$ uniform for $s\in[s_-,s_+]$. Choose $a>0$ so that this norm is at most $R$ and $a\norm{b}_\infty\le\min(1/2,B-1)$. Then $1/2\le h_\omega\le\min(B,3/2)$ and every $p_\omega=p_{h_\omega}$ belongs to the stated class.

For two sign vectors, disjoint supports imply
\[
 \norm{h_\omega-h_{\omega'}}_2^2
 =c_ba^2M^{-2s-q}d_H^{\mathrm{bit}}(\omega,\omega'),
 \qquad c_b>0,
\]
where $d_H^{\mathrm{bit}}$ counts their differing coordinates. The lower bound on $g_0$ and the inverse inequality in Proposition~\ref{prop:geometry} give the same lower bound, up to a fixed constant, for squared Hellinger distance. For neighbors differing in one coordinate, the upper comparison gives
\[
 n d_{G_0}^2(p_\omega,p_{\omega^{(k)}})
 \le Ca^2nM^{-2s-q}.
\]
Take $M=\lceil n^{1/(2s+q)}\rceil$ and decrease $a$, independently of $n$ and $s$, so that the right-hand side is at most $1/4$. The product-Hellinger and total-variation inequalities in Appendix~\ref{app:minimax} then bound the total variation distance between neighboring sample distributions by $1/2$.

Given any conditional-density estimator, select its nearest alternative in $d_{G_0}$ with fixed tie breaking. The same binary-testing argument as in Appendix~\ref{app:minimax} gives average Hamming error at least $M^q/4$. The triangle inequality and the preceding separation bound therefore imply worst-case squared risk at least
\[
 ca^2M^{-2s-q}M^q
 =ca^2M^{-2s}
 \ge c'n^{-2s/(2s+q)}.
\]
The last constant is uniform because $M\le2n^{1/(2s+q)}$ and $s\le s_+$.

\textit{3. Lower bound for Chebyshev series.} Let $U$ be uniform probability on $[-1,1]$ and define
\[
 F(u)=\cos\{\pi(u+1)/2\},\qquad
 I(z)=2\arccos(z)/\pi-1.
\]
If $u$ has distribution $U$, then $F(u)$ has the arcsine reference distribution; $I$ is the inverse transformation. Coordinatewise transformation of the tensor Chebyshev basis gives
\[
 \widetilde\Phi_\nu(u)=\Phi_\nu(F(u))
 =\prod_{r=1}^q c_{\nu_r}\cos\{\nu_r\pi(u_r+1)/2\},
 \qquad c_0=1,\quad c_k=\sqrt2\ (k\ge1).
\]
Use the preceding bumps in the uniform coordinates and set $h_\omega^{\mathrm C}(z)=1+v_\omega(I(z))$. The transformation preserves the outcome normalization, uniform bounds, disjoint supports, and reference $L_2$ distances. The Chebyshev coefficients satisfy
\[
 \theta_{\omega,\nu}
 =\int v_\omega(u)\widetilde\Phi_\nu(u)\,
                   U^{\otimes q}(du).
\]

Choose an integer $m$ with $2m>s_+$. The bumps and their derivatives vanish near the boundary. For each coordinate $r$, integration by parts $2m$ times gives
\[
 \int\partial_{u_r}^{2m}v_\omega(u)\widetilde\Phi_\nu(u)
                   \,U^{\otimes q}(du)
 =(-1)^m(\nu_r\pi/2)^{2m}\theta_{\omega,\nu}.
\]
Using $(1+|\nu|_1)^{4m}\le C(1+\sum_r\nu_r^{4m})$, Bessel's inequality, and \eqref{eq:bump-derivatives}, we obtain
\[
 \begin{split}
 \sum_\nu(1+|\nu|_1)^{4m}\theta_{\omega,\nu}^2
 &\le C\left(\norm{v_\omega}_2^2+
               \sum_r\norm{\partial_{u_r}^{2m}v_\omega}_2^2\right)
 \le Ca^2M^{4m-2s},\\
 \sum_\nu\theta_{\omega,\nu}^2&\le Ca^2M^{-2s}.
 \end{split}
\]
H\"older's inequality with exponents determined by $s/(2m)$ now gives
\[
 \sum_\nu(1+|\nu|_1)^{2s}\theta_{\omega,\nu}^2
 \le Ca^2.
\]
As for Legendre series, the powers of $M$ cancel and the coefficients with outcome degree zero vanish. A fixed sufficiently small $a$ therefore puts every alternative in the required degree class, uniformly over the smoothness interval. The preserved $L_2$ distances and the bounds on $g_0$ give the same Hellinger separation and neighboring-sample total-variation bounds as above. The binary-testing argument proves the Chebyshev lower bound.

\textit{4. Bayesian rates.} If $d(j)$ is the degree of the $j$th term, the counting bound gives $j^{1/q}\le1+d(j)$. Hence the degree class of radius $R$ lies in $\Theta_{s/q}(R)$. Apply Theorem~\ref{thm:floor} with a prior radius strictly larger than $R$ to obtain \eqref{eq:all-s-bayes-rate}. For the positive subclass, Proposition~\ref{prop:classical} gives $\rho=2$ for Legendre and $\rho=1$ for Chebyshev. The second assertion of Theorem~\ref{thm:floor} therefore removes the logarithmic factor at the stated smoothness thresholds. Uniformity holds on compact intervals separated from those thresholds, with dependence on the fixed positive lower bound allowed. These Bayesian upper bounds require only the upper bound on $g_0$.
\end{proof}

\section{Proof for independent Gaussian coefficients}\label{app:ordinary}
\begin{proof}[Proof of Theorem~\ref{thm:ordinary}]
Fix $\alpha=\alpha_0$ and define
\[
 \begin{split}
 I_n&=(n/\log n)^{1/(2\alpha+1)},\qquad J_* =\lceil I_n\rceil,
 \qquad\widetilde\eps=I_n^{-\alpha},\\
 T&=n\widetilde\eps^2=I_n\log n,
 \qquad\eps=\widetilde\eps(\log n)^{(1-\zeta)/2}.
 \end{split}
\]
Thus $n\eps^2=T(\log n)^{1-\zeta}$. The constants below may depend on the fixed smoothness endpoints, $R_0,b_0,\overline g,C_K,\rho$, and the prior parameters; they are uniform in $n$, $\alpha$, $p_0$, and $G_0$ in the stated classes. We use only the upper bound on $g_0$.

\textit{1. An approximating event and the denominator.} Fix $0<r\le1$. Conditional on $J=J_*$, consider
\[
 B_n=\{\theta:\norm{\theta-\theta_0^{(J_*)}}_2
                         \le r\widetilde\eps\}.
\]
The total coefficient error is at most $C\widetilde\eps$ on this ball. The truncation bound in Lemma~\ref{lem:interpolation} and the finite-series basis bound give
\[
 \begin{split}
 \norm{h_\theta-h_0}_\infty
 &\le CR_0J_*^{\rho/2-\alpha}
       +\sqrt{C_K}J_*^{\rho/2}r\widetilde\eps\\
 &\le CJ_*^{\rho/2-\alpha}\longrightarrow0
 \end{split}
\]
uniformly because $\alpha\ge\alpha_->\rho/2$. The representations on $B_n$ therefore have a common positive lower bound and a common finite upper bound for all sufficiently large $n$. The last assertion of Lemma~\ref{lem:localKL} yields $K+V\le C\widetilde\eps^2$ and a fixed bound on the absolute log ratio.

For $\theta\in B_n$, $\norm{\theta}_2\le R_0+1$ eventually. Its Gaussian density is consequently at least
\[
 (2\pi\tau^2)^{-J_*/2}
       e^{-(R_0+1)^2/(2\tau^2)}.
\]
Multiply by the volume of the ball. Stirling's bound for the volume of the Euclidean unit ball gives
\[
 -\log\Pi(B_n\mid J_*)\le CJ_*\log n\le CT.
\]
The dimension prior also satisfies $-\log\pi_\zeta(J_*)\le CJ_*[\log(e+J_*)]^\zeta\le CT$. Lemma~\ref{lem:denominator} therefore supplies an event $\mathcal E_n$ with
\begin{equation}\label{eq:ordinaryden}
 P_0^n(\mathcal E_n^c)\le e^{-cT},\qquad
 D_n\ge e^{-C_DT}\quad\text{on }\mathcal E_n.
\end{equation}

\textit{2. Prior probability outside a sieve and its covering number.} For fixed $L_1,L_2>0$, retain parameters satisfying
\[
 J\le\overline J_n=
 \left\lceil\frac{L_1T}{(\log n)^\zeta}\right\rceil,
 \qquad\max_{j\le J}|\theta_j|\le B_n^*=\sqrt{L_2T}.
\]
Let $\mathcal F_n$ be the corresponding set of densities. Splitting the dimension-prior exponent into two equal parts and summing one part gives
\[
 \Pi(J>m)\le C\exp\{-\tfrac\lambda2m[\log(e+m)]^\zeta\}.
\]
The remaining sum is finite because $[\log(e+J)]^\zeta\ge1$. Since $\log\overline J_n\asymp\log n$ uniformly in $\alpha$, choose $L_1$ so that the dimension tail is at most $Ce^{-(C_D+3)T}$. The conditional Gaussian tail bound gives
\[
 \Pi\{J\le\overline J_n,\ \max_{j\le J}|\theta_j|>B_n^*\}
 \le2\overline J_n e^{-L_2T/(2\tau^2)}.
\]
Choose $L_2$ so that the total excluded prior probability is at most $Ce^{-(C_D+2)T}$.

For each retained dimension, cover its coefficient cube by Euclidean balls and apply Proposition~\ref{prop:geometry}. For $M\ge1$,
\begin{align*}
 \log N(M\eps/64,\mathcal F_n,d_{G_0})
 &\le\log(\overline J_n+1)
   +\overline J_n\log\{1+CB_n^*\sqrt{\overline J_n}/\eps\}\\
 &\le C_Hn\eps^2.
\end{align*}
The logarithm is uniformly $O(\log n)$ because its argument is bounded by a fixed power of $n$, and $\overline J_n\log n\le C L_1T(\log n)^{1-\zeta}$. After $L_1,L_2$ are fixed, $C_H$ is independent of $n$, $\alpha$, $p_0$, and $G_0$.

\textit{3. Posterior loss and predictive risk.} Lemma~\ref{lem:tests} gives type-I error at most $e^{(C_H-c_TM^2)n\eps^2}$ and type-II error at most $e^{-c_TM^2n\eps^2}$ for the part of the sieve farther than $M\eps$ from $p_0$. Combine this test with \eqref{eq:ordinaryden} in \eqref{eq:posterior-decomposition}. The excluded prior probability, divided by the denominator lower bound, is at most $Ce^{-2T}$. The integrated type-II term is at most $e^{C_DT-c_TM^2n\eps^2}$. Since $n\eps^2\ge T$, choosing $M$ with $c_TM^2>C_H+C_D+2$ proves \eqref{eq:ordinary-bound}.

Squared Hellinger loss is at most two. Splitting its posterior expectation at $M^2\eps^2$ proves the risk bound, because $e^{-cT}=o(\eps^2)$ uniformly. Convexity gives the predictive risk bound. For each fixed true distribution, the exponential expectation bound is summable in $n$; Markov's inequality and Borel--Cantelli therefore give the stated almost-sure posterior convergence.
\end{proof}

\section{Proof for selection of polynomial terms}\label{app:sparse}
\begin{proof}[Proof of Theorem~\ref{thm:sparse}]
Fix $q_0>0$, put $q_*=\max(q_0,2)$, and consider $n$ with $\eta_n\le1$. Write $T=n\eta_n^2\ge\log n$. The admissible collection is nonempty and finite, so the minimum in \eqref{eq:sparseoracle} is attained. Let $S_*$ be a minimizing subset and put $k_*=|S_*|$. Bessel's inequality and the definition of the oracle criterion give
\begin{equation}\label{eq:oraclecomponents}
 \norm{\theta_{0,S_*}}_2\le R_0,\qquad
 \norm{h_{0,S_*}-h_0}_2\le\eta_n,\qquad
 c_n(S_*)+\log n\le T.
\end{equation}
All constants below are uniform in $n,K_n,G_0,h_0$ under the theorem's restrictions. They may depend on $q_0,\kappa,C_K,\rho,b_0,B_0,R_0,\overline g,A_s$, and $\tau$. No lower bound on $g_0$ is used.

\textit{1. Prior probability near the approximating coefficients.} Choose $D_b>1/2+\kappa\rho/2$ and let $B_n$ be the coefficient ball on $S_*$ with center $\theta_{0,S_*}$ and radius $r_n=n^{-D_b}$. On this ball,
\[
 \norm{h_\theta-h_{0,S_*}}_\infty
 \le\sqrt{C_K}K_n^{\rho/2}r_n\longrightarrow0,
 \qquad
 \norm{h_\theta-h_0}_2\le r_n+\eta_n\le2\eta_n
\]
uniformly. The last inequality uses $\eta_n\ge\sqrt{\log n/n}$. Admissibility gives $\norm{h_{0,S_*}-h_0}_\infty\le b_0/4$; for all sufficiently large $n$, the coefficient perturbation also has supremum norm at most $b_0/4$. Consequently, $b_0/2\le h_\theta\le B_0+b_0/2$ on $B_n$. The bounded-positive-representation assertion of Lemma~\ref{lem:localKL} gives $K+V\le C\eta_n^2$ and a fixed bound on the absolute log ratio. This application does not require an ellipsoid constraint on the coefficients.

If $k_*>0$, integrate the lower bound on the Gaussian density over the coefficient ball. Its center has norm at most $R_0$, its radius is $n^{-D_b}$, and $k_*\le K_n\le n^\kappa$, so
\[
 -\log\Pi(B_n\mid S_*)
 \le C+\{D_bk_*+\tfrac12\kappa k_*\}\log n+Ck_*
 \le C(k_*+1)\log n.
\]
If $k_*=0$, the conditional probability is one. To include the subset probability, use
\begin{equation}\label{eq:kraft}
 \sum_S e^{-c_n(S)}=\sum_{k=0}^{K_n}n^{-k}\le2,
 \qquad
 1\le\sum_S e^{-A_sc_n(S)}\le2,
 \qquad n\ge2.
\end{equation}
The second bound uses $A_s>1$ and $c_n(S)\ge0$. Thus $\pi_n(S_*)\ge\tfrac12e^{-A_sc_n(S_*)}$. Equation~\eqref{eq:oraclecomponents} now shows that the joint approximating event has prior probability at least $e^{-C_1T}$.

Choose the fixed margin in Lemma~\ref{lem:denominator} so that its exponent is at least $(q_*+2)T$. Such a choice is possible because the log-ratio bound is fixed and $u^2/\{2(C_V+Lu/3)\}$ increases without bound as $u\to\infty$. We obtain an event $\mathcal E_n$ with
\begin{equation}\label{eq:sparseden}
 P_0^n(\mathcal E_n^c)\le e^{-(q_*+2)T},\qquad
 D_n\ge e^{-C_DT}\quad\text{on }\mathcal E_n.
\end{equation}

\textit{2. Prior probability outside a sieve and its covering number.} Retain parameters satisfying $c_n(S)\le L_1T$ and $\norm{\theta}_\infty\le\sqrt{L_2T}$, and let $\mathcal F_n$ be their densities. Equation~\eqref{eq:kraft} gives
\[
 \Pi_n\{c_n(S)>L_1T\}\le2e^{-(A_s-1)L_1T}.
\]
The Gaussian tail for the coefficient restriction is bounded by
\[
 2K_ne^{-L_2T/(2\tau^2)}
 \le2e^{-(L_2/(2\tau^2)-\kappa)T},
\]
where $\log K_n\le\kappa\log n\le\kappa T$. Choose $L_1,L_2$ so that the total excluded prior probability is at most $Ce^{-(C_D+q_*+2)T}$.

Again using \eqref{eq:kraft},
\[
 \#\{S:c_n(S)\le L_1T\}\le2e^{L_1T},\qquad
 |S|\le L_1T/\log n
\]
for retained subsets. Cover the coefficient cube on each subset and apply Proposition~\ref{prop:geometry}. For $M\ge1$,
\[
 \begin{split}
 \log N(M\eta_n/64,\mathcal F_n,d_{G_0})
 \le{}&\log2+L_1T\\
 &+\max_{S:c_n(S)\le L_1T}
 |S|\log\{1+C\sqrt{L_2T}\sqrt{|S|}/\eta_n\}
 \le C_HT.
 \end{split}
\]
To verify the last bound, use $\sqrt T/\eta_n=\sqrt n$ and $|S|\le K_n\le n^\kappa$. The logarithm is then $O(\log n)$ uniformly, and $|S|\log n\le L_1T$. The constant $C_H$ is fixed after $L_1,L_2$ have been chosen.

\textit{3. Posterior probability and risk.} Lemma~\ref{lem:tests} gives a test with type-I error at most $e^{(C_H-c_TM^2)T}$ and type-II error at most $e^{-c_TM^2T}$ against densities in the sieve at distance greater than $M\eta_n$ from $p_0$. Use this test and \eqref{eq:sparseden} in \eqref{eq:posterior-decomposition}. The four contributions are the denominator failure, the type-I error, the integrated type-II error, and the excluded prior probability divided by the denominator lower bound. Choosing $M$ sufficiently large makes each contribution at most $e^{-q_*T}$ for all sufficiently large $n$, uniformly in the theorem's classes. The excluded parameters include every subset with $c_n(S)>L_1T$. Thus the same bound proves \eqref{eq:sparse-contraction} with $L=L_1$, since $q_*\ge q_0$.

Splitting squared Hellinger loss at $M^2\eta_n^2$ gives
\[
 \E_0\int d_{G_0}^2(p_\theta,p_0)\,
          \Pi_n(d\theta\mid\cD_n)
 \le M^2\eta_n^2+8e^{-q_*T}
 \le(M^2+8)\eta_n^2.
\]
The last inequality uses $q_*\ge2$, $T\ge\log n$, and $n^{-2}\le\eta_n^2$. Convexity of squared Hellinger loss gives the same upper bound for predictive risk.
\end{proof}

\section{Lower bound for selection of polynomial terms}\label{app:sparselower}
\begin{proof}[Proof of Theorem~\ref{thm:sparse-lower}]
We construct well-separated positive densities whose selected terms are difficult to distinguish. The constants depend only on the fixed covariate-density and basis-growth bounds, and are uniform in $n,k,K$ subject to the theorem's restrictions.

\textit{1. Separated subsets.} Set $m=\lfloor K/k\rfloor\ge16$. Divide $km$ of the first $K$ terms into $k$ groups of $m$ terms. Each vector $v\in\{1,\ldots,m\}^k$ selects one term from every group. Construct a collection $\mathcal V$ by repeatedly selecting a vector and removing every remaining vector within Hamming distance $\lfloor k/4\rfloor$. The selected vectors have pairwise Hamming distance greater than $\lfloor k/4\rfloor$, and therefore at least $k/4$. Each selection removes at most $\sum_{j=0}^{\lfloor k/4\rfloor}\binom kj(m-1)^j$ vectors. Hence
\[
 |\mathcal V|\ge
 \frac{m^k}{\sum_{j=0}^{\lfloor k/4\rfloor}\binom kj(m-1)^j}
 \ge\frac{m^{3k/4}}{2^k},\qquad
 \log|\mathcal V|\ge\tfrac12 k\log m.
\]
The last inequality uses $\log m\ge4\log2$. Choose $c_1\ge8\log2$ in the theorem, so that $\log|\mathcal V|\ge4\log2$.

\textit{2. Positive density alternatives.} Put $\delta^2=a\log m/n$, where $a>0$ will be fixed below. For $v\in\mathcal V$, assign coefficient $\delta$ to its selected terms and zero to all other terms. Define
\[
 h_v=1+\sum_j\theta_{v,j}\phi_j,\qquad p_v=p_{h_v}.
\]
Each representation has outcome integral one. The basis-growth bound yields
\[
 \norm{h_v-1}_\infty
 \le\sqrt{k}\delta\sqrt{C_KK^\rho}.
\]
Since $\log m\le\log(K/k)$, condition \eqref{eq:sparse-local-condition} bounds the square of the right-hand side by $C_Kac_2$. Choose $c_2$ so that this is at most $1/4$. Then $1/2\le h_v\le3/2$, and every alternative belongs to $\mathcal P_{k,K}$.

Let $d_H^{\mathrm{word}}(v,v')$ count the groups in which two vectors select different terms. Orthonormality and the inverse inequality in Proposition~\ref{prop:geometry} give
\[
 \begin{split}
 d_{G_0}^2(p_v,p_{v'})
 &\ge c\norm{\theta_v-\theta_{v'}}_2^2
 =2c\delta^2d_H^{\mathrm{word}}(v,v')\\
 &\ge\tfrac c2 k\delta^2.
 \end{split}
\]
This lower comparison uses $g_0\ge\underline g$. For the reference density $p_*=1$, the bounded-positive-representation part of Lemma~\ref{lem:localKL} and $g_0\le\overline g$ give
\[
 \KL(P_{p_v,G_0}^n,P_{p_*,G_0}^n)
 =nK_{G_0}(p_v,p_*)
 \le Cn\norm{\theta_v}_2^2
 =Ca k\log m.
\]
Choose $a$ so that this is at most $\log|\mathcal V|/8$. Both $a$ and the corresponding $c_2$ can be fixed independently of $n,k,K$.

\textit{3. Testing and estimation risk.} Let $V$ be uniform on $N$ alternatives with sample distributions $P_v$. For any estimator $\widehat V$ and any reference distribution $Q$,
\[
 P(\widehat V\ne V)
 \ge1-\frac{N^{-1}\sum_v\KL(P_v,Q)+\log2}{\log N}.
\]
To verify this form of Fano's inequality, the mutual information between $V$ and the sample is at most $N^{-1}\sum_v\KL(P_v,Q)$. Conditional entropy satisfies $H(V\mid\cD_n)\le\log2+P(\widehat V\ne V)\log N$, by conditioning first on the error indicator. Combine these bounds with $H(V)=\log N$.

Apply the inequality with $N=|\mathcal V|$, $P_v=P_{p_v,G_0}^n$, and $Q=P_{p_*,G_0}^n$. The preceding choices give average error probability at least $1/2$. For any conditional-density estimator, select its nearest alternative in $d_{G_0}$, with fixed tie breaking. An incorrect choice implies that its distance from the true density is at least half the minimum distance between alternatives. Its average squared risk is therefore at least $ck\delta^2$. Finally, $\log\lfloor K/k\rfloor\asymp\log(K/k)$ uniformly for $K/k\ge16$. The supremum risk thus exceeds $ck\log(K/k)/n$. Taking the infimum over estimators proves the theorem.
\end{proof}

\begingroup
\small
\setstretch{1.0}
\setlength{\bibsep}{3pt}

\endgroup

\begin{thebibliography}{99}
\bibitem[Birg\'e(2006)]{Birge2006}
Birg\'e, L. (2006): ``Model Selection via Testing: An Alternative to (Penalized) Maximum Likelihood Estimators,'' \emph{Annales de l'Institut Henri Poincar\'e, Probabilit\'es et Statistiques}, 42, 273--325. doi:10.1016/j.anihpb.2005.04.004.

\bibitem[Gallant and Tauchen(1989)]{GT1989}
Gallant, A. R., and G. Tauchen (1989): ``Seminonparametric Estimation of Conditionally Constrained Heterogeneous Processes: Asset Pricing Applications,'' \emph{Econometrica}, 57, 1091--1120. doi:10.2307/1913624.

\bibitem[Gao and Zhou(2016)]{GaoZhou2016}
Gao, C., and H. H. Zhou (2016): ``Rate Exact Bayesian Adaptation with Modified Block Priors,'' \emph{Annals of Statistics}, 44, 318--345. doi:10.1214/15-AOS1368.

\bibitem[Ghosal, Ghosh, and van der Vaart(2000)]{GGV2000}
Ghosal, S., J. K. Ghosh, and A. W. van der Vaart (2000): ``Convergence Rates of Posterior Distributions,'' \emph{Annals of Statistics}, 28, 500--531. doi:10.1214/aos/1016218228.

\bibitem[Ghosal, Lember, and van der Vaart(2008)]{GLV2008}
Ghosal, S., J. Lember, and A. W. van der Vaart (2008): ``Nonparametric Bayesian Model Selection and Averaging,'' \emph{Electronic Journal of Statistics}, 2, 63--89. doi:10.1214/07-EJS090.

\bibitem[Luo and Luo(2026)]{LuoLuo2026}
Luo, Y., and X. Luo (2026): ``The Minimax Optimal Convergence Rate of Posterior Density in the Weighted Orthogonal Polynomials,'' arXiv:2603.18490. Version accessed 8 September 2026.

\bibitem[NIST DLMF(2026)]{DLMF}
NIST Digital Library of Mathematical Functions (2026): Chapter 18, ``Orthogonal Polynomials,'' Sections 18.3, 18.8, and 18.14. National Institute of Standards and Technology. Accessed 8 September 2026.

\bibitem[Norets and Pati(2017)]{NoretsPati2017}
Norets, A., and D. Pati (2017): ``Adaptive Bayesian Estimation of Conditional Densities,'' \emph{Econometric Theory}, 33, 980--1012. arXiv:1408.5355.

\bibitem[Scricciolo(2006)]{Scricciolo2006}
Scricciolo, C. (2006): ``Convergence Rates for Bayesian Density Estimation of Infinite-Dimensional Exponential Families,'' \emph{Annals of Statistics}, 34, 2897--2920. doi:10.1214/009053606000000911.

\bibitem[Shen and Ghosal(2015)]{ShenGhosal2015}
Shen, W., and S. Ghosal (2015): ``Adaptive Bayesian Procedures Using Random Series Priors,'' \emph{Scandinavian Journal of Statistics}, 42, 1194--1213. doi:10.1111/sjos.12159.

\bibitem[Fan, Yao, and Tong(1996)]{FanYaoTong1996}
Fan, J., Q. Yao, and H. Tong (1996): ``Estimation of Conditional Densities and Sensitivity Measures in Nonlinear Dynamical Systems,'' \emph{Biometrika}, 83, 189--206. doi:10.1093/biomet/83.1.189.

\bibitem[Hall, Racine, and Li(2004)]{HallRacineLi2004}
Hall, P., J. Racine, and Q. Li (2004): ``Cross-Validation and the Estimation of Conditional Probability Densities,'' \emph{Journal of the American Statistical Association}, 99, 1015--1026. doi:10.1198/016214504000000548.

\bibitem[Shen and Ghosal(2016)]{ShenGhosal2016}
Shen, W., and S. Ghosal (2016): ``Adaptive Bayesian Density Regression for High-Dimensional Data,'' \emph{Bernoulli}, 22, 396--420. doi:10.3150/14-BEJ663.

\end{thebibliography}
\end{document}